\theoremstyle{plain}
\newtheorem{theorem}{Theorem}[section]
\newtheorem{lemma}[theorem]{Lemma}
\newtheorem{proposition}[theorem]{Proposition}
\newtheorem{corollary}[theorem]{Corollary}
\newtheorem{definition}[theorem]{Definition}
\newtheorem{assumption}[theorem]{Assumption}
\theoremstyle{definition}
\newtheorem{remark}[theorem]{Remark}
\numberwithin{equation}{section}
\numberwithin{figure}{section}
\newcommand{\N}{\mathbb{N}}
\newcommand{\R}{\mathbb{R}}	
\newcommand{\F}{\mathcal{F}}	
\newcommand{\abs}[1]{\lvert#1\rvert }	
\newcommand{\Abs}[1]{\left\lvert#1\right\rvert }	
\newcommand{\half}{\frac{1}{2}}
\renewcommand{\matrix}[1]{\begin{pmatrix}#1\end{pmatrix}}
\newcommand{\scalar}[2]{\left\langle #1 , #2 \right\rangle}
\newcommand{\goesto}{\to}
\newcommand{\bE}{\mathbb{E}}
\newcommand{\bP}{\mathbb{P}}
\newcommand{\bR}{\mathbb{R}}
\newcommand{\cE}{\mathcal{E}}
\newcommand{\cF}{\mathcal{F}}
\newcommand{\cH}{\mathcal{H}}
\newcommand{\cI}{\mathcal{I}}
\newcommand{\cS}{\mathcal{S}}
\newcommand{\cY}{\mathcal{Y}}
\newcommand{\cZ}{\mathcal{Z}}
\newcommand{\ti}{{t_{i}}}
\newcommand{\tip}{{t_{i+1}}}
\newcommand{\tj}{{t_{j}}}
\newcommand{\ip}{{i+1}}
\newcommand{\jp}{{j+1}}
\newcommand{\ud}{\mathrm{d}}
\newcommand{\uds}{\mathrm{d}s}
\newcommand{\udt}{\mathrm{d}t}
\newcommand{\udu}{\mathrm{d}u}
\newcommand{\udr}{\mathrm{d}r}
\newcommand{\udwr}{\mathrm{d}W_r}
\newcommand{\1}{\mathbbm{1}}
\newcommand{\hfgrowth}{\textbf{(Growth)}}
\newcommand{\hfregY}{\textbf{(RegY)}{ }}
\newcommand{\hfreg}{\textbf{(Reg)}{ }}
\newcommand{\hfmon}{\textbf{(Mon)}{ }}
\newcommand{\hfmongrowth}{\textbf{(MonGr)}{ }}
\newcommand{\htgrowth}{\textbf{(TGrowth)} }
\newcommand{\htregY}{\textbf{(TRegY)} }
\newcommand{\htreg}{\textbf{(TReg)} }
\newcommand{\htmon}{\textbf{(TMon)} }
\newcommand{\htmongrowth}{\textbf{(TMonGr)} }
\newcommand{\htfcvg}{\textbf{(TCvg)} }
\newcommand{\hH}{\textbf{(AH)}}
\newcommand{\hxiN}{(\textbf{A}$\mathbf{\xi^N}$)}
\newcommand{\RregY}{\mathcal{R}^{\text{(regY)}}}
\newcommand{\Rmon}{\mathcal{R}^{\text{(mon)}}}
\newcommand{\Rbla}{\mathcal{R}}
\newcommand{\Ai}{A_i}
\newcommand{\Bi}{B_i}
\newcommand{\Biz}{B^0_i}
\title{Convergence and qualitative properties of modified explicit schemes for BSDEs with polynomial growth}
\author{
\normalsize Arnaud Lionnet 
		\footnote{A part of A. Lionnet's research on this paper was funded by a 150th Anniversary Postdoctoral Mobility Grant from the London Mathematical Society. 
				The hospitality of the University of Edinburgh is gratefully acknowledged.}  \\[8pt] 
		\small  University of Edinburgh\\ 
        \small  (Visitor) \\
        \small  and\\  
        \small INRIA Paris - ENPC \\
        \small 2 rue Simone Iff\\
        \small 75012 Paris, France\\
        \small  arnaud.lionnet@inria.fr 
 \and
 	\normalsize Gon\c calo Dos Reis\footnote{G. dos Reis acknowledges support from the \emph{Funda{\c c}$\tilde{\text{a}}$o para a Ci$\hat{e}$ncia e a Tecnologia} (Portuguese Foundation for Science and Technology) through the project UID/MAT/00297/2013 (Centro de Matem\'atica e Aplica\c c$\tilde{\text{o}}$es CMA/FCT/UNL).} \\[8pt]
         \small  University of Edinburgh\\ 
         \small  School of Mathematics \\
         \small  Edinburgh, EH9 3JZ, UK\\  
         \small  and \\
	\small  Centro de Matem\'atica e Aplica\c c$\tilde{\text{o}}$es \\
	\small (CMA), FCT, UNL, Portugal \\
        \small  G.dosReis@ed.ac.uk
 \and
         \normalsize Lukasz Szpruch \\[8pt]
         \small  University of Edinburgh\\ 
         \small  School of Mathematics \\
         \small  Edinburgh, EH9 3JZ, UK\\  
         \\
		\\
         \\
         \small  l.szpruch@ed.ac.uk
}
\date{ \ddmmyyyydate\today}
\begin{document}
\selectlanguage{english}
\maketitle

\begin{abstract}
The theory of Forward-Backward Stochastic Differential Equations (FBSDEs) paves a way to probabilistic numerical methods for nonlinear parabolic PDEs. The majority of the results on the numerical methods for FBSDEs relies on the global Lipschitz assumption, which is not satisfied for a number of important cases such as the Fisher--KPP or the FitzHugh--Nagumo equations. 
Furthermore, it has been shown in \cite{LionnetReisSzpruch2015} that for BSDEs with monotone drivers having polynomial growth in the primary variable $y$, only the (sufficiently) implicit schemes converge. But these require an additional computational effort compared to explicit schemes.

This article develops a general framework that allows the analysis, in a systematic fashion, of the integrability properties, convergence and qualitative properties (e.g.~comparison theorem) for whole families of modified explicit schemes. The framework yields the convergence of some modified explicit scheme with the same rate as implicit schemes and with the computational cost of the standard explicit scheme. 

To illustrate our theory, we present several classes of easily implementable modified explicit schemes that can computationally outperform the implicit one and preserve the qualitative properties of the solution to the BSDE. These classes fit into our developed framework and are tested in computational experiments.  
\end{abstract}

{\bf 2010 AMS subject classifications:} 
Primary: 
65C30
, 60H35
; secondary: 
60H30
.\\

{\bf Keywords :}
FBSDE, monotone driver, polynomial growth, time discretization, modified explicit schemes, non-explosion, numerical stability.

\newpage
\tableofcontents

\section{Introduction}

Since the initial papers of Zhang \cite{Zhang2004} and Bouchard and Touzi \cite{BouchardTouzi2004}, an important literature has developped concerning the methods for approximating numerically the solution to a nonlinear backward stochastic differential equation (BSDE thereafter).
The importance of BSDEs with nonlinear drivers is due to their frequent use in stochastic control problems, 
and to their deep connection with parabolic partial differential equations (PDEs), 
which are used to describe many biological and physical phenomena as well as the solution to many decision problems. 
Indeed, the solution $v$ to the PDE 
	\begin{align*}
		\Big( \partial_t v + \half (\sigma \sigma^*) \cdot v_{xx} + b \cdot v_x + f(\cdot,v,v_x \sigma) \Big)(t,x) = 0
		\quad \text{with} \quad 
		v(T,x)=g(x),
	\end{align*}
is given by solving, for $(s,x) \in [0,T] \times \R^d$, for $t \in [t,T]$, the forward-backward SDE
	\begin{align}
		\label{canonicSDE}
		X^{s,x}_t &= x + \int_s^t b(r,X^{s,x}_r)\udr + \int_s^t \sigma(r,X^{s,x}_r)\udwr,\\
		\label{canonicBSDE}
		Y^{s,x}_t &= g(X^{s,x}_T) + \int_t^T f(r,Y^{s,x}_r,Z^{s,x}_r) \udr - \int_t^T Z^{t,x}_r \udwr,
	\end{align}
and setting $v(s,x)=Y^{s,x}_s$ (see e.g. \cite{ElKarouiPengQuenez1997}).
In solving \eqref{canonicBSDE}, one seeks a pair of processes $\cS=(Y,Z)$, adapted to the filtration $\F$ of the Brownian motion $W$. The data of the BSDE are the $\F_T$-measurable random variable $\xi = g(X^{s,x}_T)$, called terminal condition, and the function $f$, generally referred to as driver and which will depend only on time, $Y$ and $Z$ for the simplicity of exposition.

Consider a discretization of the time interval $[0,T]$ by a regular subdivision $\pi^N$ : $0=t_0 < t_1 < \ldots < t_N = T$, where $\ti = ih$ for all $i \in \{0,\ldots,N\}$ and $h=T/N$. 
To construct a numerical methods for BSDEs, one typically begins by discretizing the time dynamics for $Y$ over $[\ti,\tip]$ as
	\begin{align}		\label{equation--introduction--heuristic.BTZ.scheme.for.Y}
		Y^N_i = \bE_i\Big[ Y^N_\ip + (1-\theta) f(\ti,Y^N_\ip,Z^N_i) h \Big] + \theta f(\ti, Y^N_i,Z^N_i) h ,
	\end{align}
where $\bE_i$ is $\bE\big[ \cdot | \F_\ti \big]$ and the approximation $Z^N_i$ is suitably computed.  When the parameter $\theta=0$ this is the explicit scheme while $\theta=1$ is the implicit scheme\footnotemark, both dubbed BTZ schemes in \cite{Chassagneux2012}. 
\footnotetext{Numerical schemes most often compute first $Z^N_i$ explicitly from the input $Y^N_\ip$, and then use this to compute $Y^N_i$. 
In this paper, all mentions of implicit and explicit scheme in the context of BSDEs refer the to $Y$-component.}
Such a time-discretization scheme is first initialized by setting $Y^N_N$ to be a numerical approximation $\xi^N$ of the terminal condition and then applied in a backward recursive fashion, producing a family $(Y^N_i,Z^N_i)_{i=0, \ldots, N}$ approximating the solution $(Y_t,Z_t)_{t \in [0,T]}$ of the BSDE.

\paragraph*{}

There has been a significant progress in the analysis of variants of scheme \eqref{equation--introduction--heuristic.BTZ.scheme.for.Y} or the ways to approximation the conditional expectations $\bE_i$, although the vast majority of works impose a restrictive Lipschitz condition on driver $f$ in both its $Y$ and $Z$ variables (see \cite{BenderDenk2007}, \cite{CrisanManolarakis2014}, \cite{BriandLabart2014}, \cite{GobetTurkedjiev2016}, and references therein). 

However, in many cases of interest, the driver is not Lipschitz but instead has superlinear growth in one of its variables. 
For instance, for PDEs of reaction-diffusion type such as the Allen--Cahn equation, the FitzHugh--Nagumo equations, the Fisher--KPP equation or the standard nonlinear heat and Schr\"odinger equation (see \cite{Henry1981}, \cite{Rothe1984}, \cite{EstepLarsonWilliams2000}, \cite{Kovacs2011} and references), the function $f$ is a polynomial in $Y$. 
Meanwhile, in stochastic control problems, the driver typically has quadratic growth in the $Z$ variable. 

Chassagneux and Richou obtained in \cite{ChassagneuxRichou2016} the convergence of an implicit scheme in the case where the terminal condition $\xi$ is bounded and the driver $f$ has quadratic growth in $Z$.
In \cite{LionnetReisSzpruch2015} we studied the case where the terminal condition has all moments, the driver has polynomial growth in $Y$ and satisfies a so-called monotonicity condition (also known as one-sided Lipschitz condition). 
The monotonicity condition is a structure property which states, in the scalar case, that for all $y$, $y'$ in the domain $\R$ and for all $z$,
	\begin{align}		\label{equation--introduction--monotonicity.in.1d}
		\big( f(y,z)-f(y',z) \big) \; (y'-y) \le M_y \abs{y'-y}^2 ,
	\end{align}
where $M_y \in \R$. For instance, the driver $f(y,z)=y-y^3$ typical of the FitzHugh--Nagumo equation is one-sided Lipschitz over the domain $\R$ with $M_y=1$.

As explained in \cite{LionnetReisSzpruch2015}, the explicit scheme described in \eqref{equation--introduction--heuristic.BTZ.scheme.for.Y} can explode.  
This is due to the superlinear growth of the driver $f$ and the unboundedness of the terminal condition $\xi$. 
As a remedy, \cite{LionnetReisSzpruch2015} proposed to use the implicit scheme, which was shown to converge, or an explicit scheme with a truncated numerical terminal condition $T^N\big(\xi^N\big)$, where the truncation function $T^N$ fades to the identity when the number $N$ of time-steps goes to $+\infty$. 
However, the implicit scheme requires an extra computational effort to solve the nonlinear equation \eqref{equation--introduction--heuristic.BTZ.scheme.for.Y} defining $Y^N_i$ when $\theta=1$. 
And for the explicit scheme with the truncated terminal conditions, severe restrictions on the size of the time-step had to be imposed, and the tuning/performance of the algorithm depends on knowledge of $f$, in particular on its growth. 
The purpose of this paper is to obtain converging explicit schemes by working instead on the dynamics of the scheme itself, replacing the driver $f$ by a modified driver $f^h$, with no time-step restriction. In addition, we can obtain some numerical schemes of black-box type, where no a priori knowledge on the structure of the driver is required.

\paragraph*{} 

Explosion problems of naïve explicit schemes were already stressed in \cite{HutzenthalerJentzenKloeden2011} in the context of the numerical methods for stochastic differential equations (SDEs thereafter). 
A significant body of works considered various modifications to the explicit schemes for SDEs, dubbed ``tamed schemes'', to recover integrability and convergence in the non-Lipschitz setting, 
see \cites{HutzenthalerJentzenKloeden2011,HutzenthalerJentzenKloeden2012,hutzenthaler2013exponential,hutzenthaler2014perturbation,hutzenthaler2015numerical,
ChassagneuxJacquierMihaylov2014,szpruch}. 
In the context of BSDEs, very particular instances of modified drivers were already used by \cite{ChassagneuxRichou2016} to deal with the quadratic growth in $Z$, and in \cite{LionnetReisSzpruch2015} to deal with the dependence of the driver of  \eqref{canonicBSDE} in the solution $X$ to the SDE \eqref{canonicSDE}. 
But this was only used as an ad hoc tool to handle a particular issue.

This motivates us to study systematically the family of modified explicit schemes where the BSDE driver $f$ is replaced by a ``tamed'' driver $f^h$. 
Provided the modified driver $f^h$ is appropriately ``tamed'', the explosion of the scheme is prevented. 
One then expects that such a modified scheme will converge to the continuous-time solution provided $f^h \rightarrow f$. In addition, if this convergence can happen fast enough, the usual convergence rate of the implicit schemes can be recovered.
Our approach in this work is to identify the essential properties of these modified drivers $f^h$ which guarantee the convergence of the corresponding modified explicit scheme for BSDEs. 
As a consequence, we show at once the convergence of a whole range of modified explicit BSDE schemes. 



\paragraph*{}
In a certain sense, this question is reminiscent of the stability with respect to the driver for solutions of continuous-time BSDEs. 
Given the fixed set of times $[0,T]$, let us denote by $\cS^f$ the solution to the BSDE with driver $f$ and by $\cS^{f^\epsilon}$ the solution for the driver $f^\epsilon$, 
such that $f^\epsilon$ converges to $f$ in some sense (uniformly on compact, typically). The stability theorem, valid for monotone drivers as well as for Lipschitz drivers, states that $\cS^{f^\epsilon}$ converges to $\cS^f$, in the appropriate norm, and gives an upper bound on the distance.
Here, for the set of times $\pi^N$ (which ``converges to $[0,T]$'' as $N \goesto +\infty$), let us denote by $S^{N,f^h} = (Y^{N,f^h}_i,Z^{N,f^h}_i)_{i=0 \ldots N}$ the output of the modified explicit scheme with drivers $f^h$, and by $S^{N,f}$ the output of the standard (BTZ) explicit scheme, with driver $f$. 
One may then be tempted to say that, as $f^h$ converges to $f$ when $h=T/N \goesto 0$, $S^{N,f^h}$ should be close to $S^{N,f}$ for large $N$. 
However, the BTZ explicit scheme $S^{N,f}$, in general, does not converge to $\cS^{f}$ when $f$ is not Lipschitz, while in this paper it is proved that $S^{N,f^h}$ converges to $\cS^{f}$. Therefore, convergence of $S^{N,f^h}$ to $S^{N,f}$ cannot hold (at least not uniformly over $\pi^N$ : for fixed $\pi^N$ and $f^\epsilon \goesto f$, one should have $S^{N,f^\epsilon} \goesto S^{N,f}$). 
We depict the situation on Figure \ref{fig:1} where by $(!)$ we marked the convergence results that, in general, do not hold in the non-Lipschitz setting.

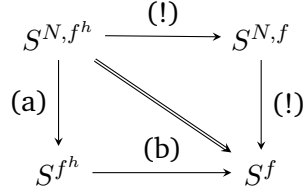
\begin{figure} \label{fig:1}
\begin{center}
	\begin{tikzpicture}
  \matrix (m) [matrix of math nodes,row sep=3em,column sep=4em,minimum width=2em]
  {
    S^{N,f^h} & S^{N,f} \\
    S^{f^h} & S^{f} \\};
  \path[-stealth]
    (m-1-1) edge node [left] {(a)} (m-2-1)
            edge  node [above] {(!)} (m-1-2)
            edge [double]  (m-2-2)
    (m-2-1.east|-m-2-2) edge node [below] {}
            node [above] {(b)} (m-2-2)
    (m-1-2) edge node [right] {(!)} (m-2-2);
\end{tikzpicture}
\end{center}
\caption{Schematic diagram of key findings of this paper. Double arrow marks the conclusion of Theorem \ref{theorem---main--convergence.of.tamed.scheme}. $(!)$ marks convergence that in general does not hold. }
\end{figure}

One could nonetheless say that the BSDE solution $\cS^f$ with driver $f$ and the BSDE solution $\cS^{f^h}$ with driver $f^h$ must be close by the stability result for continuous time BSDEs. This depicted on the Figure \ref{fig:1} by the arrow (b).
Then, if $f^h$ is Lipschitz for each $N$ (albeit with Lipschitz and growth constants which explode when $h\rightarrow 0$), the standard estimate for the convergence of the explicit BTZ scheme with Lipschitz drivers should allow to bound the distance between $\cS^{f^h}$ and $S^{N,f^h}$. 
This depicted on the Figure \ref{fig:1} by the arrow (a). 
However, this strategy will not work since the upper bound in that estimate involves (the exponential of) a constant that grows with the regularity and growth constants of $f^h$.
%
One must therefore carefully analyse the modified explicit schemes and work with modifications $f^h$ that, on one hand, allow to tame the superlinear growth and, on the other hand, preserve certain structural properties of $f$ such as  the monotonicity condition.  
In addition, we manage to show our convergence $S^{N,f^h} \goesto \cS^{f}$ for modified drivers $f^h$ that are not necessarily Lipschitz, but only almost-Lipschitz and of linear growth (cf the assumptions \htregY and \htgrowth below, when $\RregY \neq 0$).

\paragraph*{} 

Taming the driver does not merely allow to recover the convergence of the explicit scheme for BSDEs in the regime $h \goesto 0$. It also makes it more robust and qualitatively more satisfying for finite $h$. 
An important qualitative property is the comparison theorem, which plays a major role in the study of continuous-time BSDEs. In the context of numerical schemes for BSDEs with driver that is quadratic in $Z$, this property was obtained in \cite{ChassagneuxRichou2016} and used to prove the convergence of their scheme. 
In this paper, we show in the case of drivers with superlinear growth in $Y$, that a comparison theorem holds for suitably modified explicit schemes. This qualitative property is of interest for its own sake, but also allows to deduce that the scheme remains in some domain $D$ of the space when the continuous-time solution does. This is important if the monotonicity condition for $f$ is only satisfied on $D$, as is the case for instance for the Fisher--KPP equation where the driver is $f(y,z)=y-y^2$. Here the monotonicity condition for $f$ is satisfied on the domain $D=[0,+\infty[$, and the solution stays positive.
To the best of our knowledge, this is the first result on a comparison theorem for explicit schemes for BSDEs, even for a driver $f$ that is Lipschitz in both its $Y$ and $Z$ variables.

\paragraph*{}
The paper is organized as follow. 
In Section \ref{section---set-up}, we describe the precise assumptions under which we work, the numerical schemes considered and our main results. 
Sections \ref{section---size.of.scheme} and \ref{section---convergence.of.scheme} are concerned with proving the convergence of the scheme. 
Specifically, in Section \ref{section---size.of.scheme}, we show how taming the driver prevents the scheme from exploding, deriving almost-sure local and then global bounds, which lead to moment estimates. 
In Section \ref{section---convergence.of.scheme}, we then prove that the scheme converges. For this we first prove that the scheme is almost $L^2$-stable over one time-step, which allows to control the propagation of errors. We then analyze the discretization error created at each time-step, and show that their sum converges with rate at least $1/2$. 
In Section \ref{section---qualitative.properties}, we study the discrete comparison and the preservation of positivity.
Finally, Section \ref{section---numerical.simulations} illustrates with some examples of tamed drivers the results we obtained. 
The paper closes with two appendices. Appendix \ref{appendix:SideResults} provides proofs regarding our main results, while Appendix \ref{appendix:VerifyingExamples} provides proofs for the examples studied in Section \ref{section---numerical.simulations}.

\section{Assumptions, schemes and main results}
\label{section---set-up}

\paragraph{Notation.}

Fix $T>0$. We work on a canonical Wiener space $(\Omega,
\cF, \bP)$ carrying a $d$-dimensional Wiener process $W = (W^1,\cdots, W^d)$
restricted to the time interval $[0,T]$. We denote by 
$\cF=(\cF_t)_{ t\in[0,T]}$ its natural filtration enlarged in the usual way
by the $\bP$-zero sets and by $\bE$ and $\bE[\cdot|\cF_t]=\bE_t[\cdot]$ the
usual expectation and conditional expectation operator respectively.

We denote by $\scalar{\cdot}{\cdot}$ and $\abs{\cdot}$ the canonical inner product and Euclidean norm on $\R^d$, and by $x^*$ the transposed of $x \in \R^d$ when seen as a $\R^{d \times 1}$ matrix (column-vector).
$I_d$ denotes the $d$-dimensional identity matrix. 
$L^p = L^{p}(\cF_T,\bR^d)$ is the space of $\R^d$-valued $\cF_T$-measurable random variables $X$ with norm  $\|X\|_{L^p} = \bE[\, |X|^p]^{1/p} < \infty$. 
$\cS^{p}$ is the space of $d$-dimensional $\cF$-adapted processes $Y$ satisfying $\|Y \|_{\cS^p} = \bE[\sup_{t\in[0,T]}|Y_t|^p]^{1/p}<\infty$.
$\cH^{p}$ is the space of $d$-dimensional $\cF$-adapted processes $Z$ satisfying $\|Z\|_{\cH^p}=\bE\big[\big(\int_0^T|Z_s|^2 \uds\big)^{p/2} \big]^{1/p}<\infty$.

\subsection{Assumptions on the continuous-time dynamics}
\label{subsection-AnalyticAssumption}

\paragraph*{The SDE and the terminal condition.}

We assume that the functions $b:[0,T]\times\bR^{d}\to\bR^d$, $\sigma:[0,T]\times\bR^{d}\to\bR^{d\times d}$ in \eqref{canonicSDE}  are $1/2$-H\"older continuous in their time variable, 
Lipschitz continuous and of linear growth in their spatial variables.  
The terminal condition $g : \bR^d \to \bR^n$ is a Lipschitz function. The terminal condition $\xi := g(X_T)$ is thus in $L^p$, for $p \ge 1$.

\paragraph*{The driver of the BSDE.}

We work with drivers $f : [0,T] \times \R^n \times \R^{n \times d} \to \R^n$ having polynomial growth in $y$ and satisfying the so-called monotonicity condition (also known as one-sided Lipschitz condition), while being Lipschitz functions of $z$. Specifically, our drivers $f$ satisfy the growth, monotonicity and regularity conditions stated below.
We choose not include the possibility of a Lipschitz dependence of $f$ on the variable $x$. It can be seen in \cite{LionnetReisSzpruch2015} that this can easily be dealt with. 
For clarity of our results, we exclude it here.

	\begin{itemize}
		\item[\hfgrowth] There exist $m \in \N^*$ and $K_t, K_y, K_z \ge 0$ such that, for all $(t,y,z)\in [0,T]\times \bR^n\times \bR^{n\times d}$, 
			\begin{align*}	
				\Abs{f(t,y,z)} \le K_t  + K_y \abs{y}^m + K_z \abs{z} \ .
			\end{align*}
			That is, $f$ has polynomial growth in $y$ of degree $m$ and linear growth in $z$.
		\item[\hfmon] There exist a constant $M_y \in \R$ such that for all $t,y,y',z$,
			\begin{align*}	
				\scalar{y'-y}{f(t,y',z) - f(t,y,z)} \le M_y \abs{y'-y}^2 \ .
			\end{align*}
			That is, $f$ is monotone (``decreasing'') in the variable $y$. The monotonicity constant $M_y$ can be, but is not necessarily, strictly negative.
		\item[\hfreg] There exist constants $L_t, L_z \ge 0$ such that for all $t,t',y,z'$, 
			\begin{align*}
				\Abs{f(t',y,z') - f(t,y,z)} \le L_t \abs{t'-t}^\half + L_z\abs{z'-z} \ .
			\end{align*}		
			That is, $f$ is  $\half$-H\"older in time and  Lipschitz in $z$.
	\end{itemize}
In a number of places, we need to know about the regularity of $f$ in the variable $y$ (notice that \hfmon does not even imply continuity). We assume that $f$ satisfies the following.
	\begin{itemize}
		\item[\hfregY] There exists a constant $L_y \ge 0$ such that for all $t, y, y', z$, 
			\begin{align*}	
				\Abs{f(t,y',z) - f(t,y,z)} \le L_y \big(1 + \abs{y'}^{m-1} + \abs{y}^{m-1}\big) \abs{y'-y} \ .
			\end{align*}
			That is, $f$ is locally Lipschitz in $y$ with local Lipschitz constant growing polynomially with degree $m-1$.	
	\end{itemize}
We are primarily interested in drivers $f$ that are polynomials in $y$, and for these we see that \hfregY is clearly satisfied indeed.
Finally, we introduce the following monotone growth assumption.
	\begin{itemize}
		\item[\hfmongrowth] There exist constants $\bar{M}_t, \bar{M}_z \ge 0$ and $\bar{M_y} \in \R$ such that for all $t,y,z$, 
			\begin{align*}
				\scalar{y}{f(t,y,z)} \le  \bar{M_t}  +  \bar{M_y} \abs{y}^2 +  \bar{M_z} | z|^2 \ .
						\end{align*}	
	\end{itemize}

	\begin{remark} 
		\hfmongrowth is a direct consequence of \hfmon and \hfgrowth and $\bar{M_z}$ (as well as $\bar{M}_t$) can be chosen arbitrarily small, as proved below.
		We single out this property because it controls the growth of the driver and therefore the integrability of the solution $Y$. 
		When running the scheme with a tamed driver $f^h$, we only need assumptions similar to \hfgrowth and \hfmongrowth to guarantee moment bounds for the scheme, and thereby non-explosion.

		Let $f$ satisfies {\hfmon} and {\hfgrowth}. For all $t,y,z$ and for any $\alpha > 0$, we have 
			\begin{align*}
				\scalar{y}{f(t,y,z)} 
					&= \scalar{y-0}{f(t,y,z) - f(t,0,z)} + \scalar{y}{f(t,0,z)}																					\\
					&\le M_y \abs{y-0}^2 + \abs{y} \, \big( K_t  + K_z\abs{z} \big)
					\le (M_y+\alpha)\abs{y}^2 + \frac{K_t^2}{2\alpha}  + \frac{K_z^2}{2\alpha}\abs{z}^2		\ .
			\end{align*}
		Hence we can take $\bar{M_t} = \frac{K_t^2}{2\alpha}$ and $\bar{M}_z = \frac{K_z^2}{2\alpha}\abs{z}^2$ arbitrarily small, while taking $\bar{M}_y = M_y + \alpha$.
		We also note that by combining {\hfmon} and {\hfreg} we obtain the general estimate
			\begin{align*}
				\scalar{y'-y}{f(t,y',z')-f(t,y,z)} &= \scalar{y'-y}{f(t,y',z')-f(t,y,z')}															\\
					&\qquad\qquad\qquad\qquad 	+ \scalar{y'-y}{f(t,y,z')-f(t,y,z)}														\\
					&\le M_y \abs{y'-y}^2 + \abs{y'-y} \, L_z \abs{z'-z}																		\\
					&\le (M_y+\alpha)\abs{y'-y}^2 + \frac{L_z^2}{4\alpha} \abs{z'-z}^2 \ .
			\end{align*}
	\end{remark}

\paragraph*{Results from BSDE theory.}

The assumptions \hfgrowth, \hfmon, \hfreg and \hfregY are a more detailed version of assumptions (HY0) and (HY$0_{\text{loc}}$) in \cite{LionnetReisSzpruch2015}*{Section 2.2} and they imply the fundamental BSDE results of Section 2 and 3 of \cite{LionnetReisSzpruch2015}. Essentially, those results are the existence and uniqueness of the solution, a priori bound estimates and the path-regularity theorem. We recall them in Appendix \ref{appendix-BackgroundResults}. Throughout we denote by $(Y_t,Z_t)_{t\in[0,T]}$ the unique solution to \eqref{canonicBSDE}, which we aim at approximating numerically.

\subsection{Time-discretization: dynamics and assumptions} 
\label{sec--AssumptionSection}

We discretize the time-interval $[0,T]$ using a partition $\pi : 0=t_0 < t_1 < \ldots < t_{N-1} < t_N = T$ with $N$ intervals. The modulus of the partition is $\abs{\pi} = \max_{i=0, \ldots, N-1} h_\ip$ where $h_\ip = \tip - \ti$. While our results would hold for more general partitions, we restrict ourselves to regular partitions for notational simplicity. Consequently, given the number $N$ of time-intervals, we work with the partition $\pi^N$ where $t_i = ih$, $h=T/N \in(0,T]$ being the modulus of the partition.

\paragraph*{}
We wish to focus on the numerical approximation of the backward SDE \eqref{canonicBSDE}. So we do not discuss the numerical approximation of the forward SDE \eqref{canonicSDE} and that of the terminal condition. We work with the following assumption regarding the numerical approximation $\xi^N$ of the terminal condition $\xi=g(X_T)$.

	\begin{itemize}
		\item[\hxiN] There exists a constant $c$ (independent of $N$) such that 
				\begin{align*}
					\text{ERR}_h(\xi):= \bE[\, \abs{\xi - \xi^N}^2 ]^\frac12 \leq c\, h^\frac12.
				\end{align*}
			Moreover, for any $p\geq 2$, $\xi^N\in L^p(\cF_T)$.
	\end{itemize}
Given the assumptions made on $b$, $\sigma$ and $g$, one can use the standard Euler scheme for SDEs to produce an approximation $X^N=(X^N_i)_{i = 0, \ldots, N}$ of $X$ and set $\xi^N = g(X^N_N)$. This $\xi^N$ satisfies \hxiN.

\subsubsection{The modified explicit schemes}

For $i \in \{0, \ldots, N-1\}$ we denote the Brownian increments by $\Delta W_\tip := W_\tip-W_\ti$. We also take a family of $\R^d$-valued random variables $(H_\ip)_{i=0,\cdots,N-1}$ which approximate ${\Delta W_\tip}/{h}$ and satisfy the following assumption. 

\begin{itemize}
	\item[\hH]
		\begin{enumerate}
			 \item For any $i=0, \ldots, N-1$, $H_\ip$ is independent from $\cF_i$  and satisfies $\bE_i[H_\ip] = 0$ ($H_\ip$ is a martingale increment);
			 \item For any $i=0, \ldots, N-1$, $\bE_i\big[ (H_\ip h)(H_\ip h)^* \big] = \Lambda h \ I_d$, where $\half \leq \Lambda \leq 1$. 
					As a consequence, $\bE_i\big[ \abs{H_\ip h}^2 \big] = \Lambda d h$;
	 		\item There exists a constant $C \ge 0$ (independent of $N$) such that
				\begin{align*}
					\max_{i=0 \ldots N-1} \bE\Bigg[ \Abs{\frac{\Delta W_\tip}{h} - H_\ip }^2 \Bigg] \le C .
				\end{align*}
		\end{enumerate}	
\end{itemize}
In works on numerical methods for BSDEs, $H_\ip$ is often defined as ${\Delta W_\tip}/{h}$ (in which case $\Lambda=1$). 
However, as will be seen in Section \ref{section---qualitative.properties}, in order to have an explicit scheme which is numerically stable (i.e. reproduces qualitative properties of the continuous-time BSDE, such as the posivity of the solution), the $H_\ip$ are required to be bounded. One way to do this is to truncate the Brownian increment $\Delta W_\tip$ to $\Delta W^h_\tip$, by projecting it on the centered ball of radius $R^h$, where $R^h \goesto +\infty$ as $h \goesto 0$. In that case, $H_\ip = {\Delta W^h_\tip}/{h}$. Also, working with $H_\ip$ rather than $\Delta W_\tip/h_\ip$ also allows to include in the analysis tree-based methods such as cubature \cite{CrisanManolarakis2014}.

\paragraph*{}
We work with the following scheme. It is initialized with $Y_N = \xi^N$ (and $Z_N=0$). Then, for $i = N-1$ to $0$, the \emph{output of one step of the scheme when the input is $Y_\ip$} is $(Y_i,Z_i) = S_i(Y_\ip)$ defined by
	\begin{align} 
	\label{equation---reference--definition.of.the.scheme}
		\left\{\begin{aligned}
			Y_i 
			&
			= \bE_i\Big[ Y_\ip + f^h(t_i,Y_\ip,Z_i)h \Big], 
			\\
			Z_i 
			&
			=  \bE_i\Big[ \big( Y_\ip +(1-\theta') f^h(t_i,Y_\ip,0)h \big) H_\ip^* \Big],
		\end{aligned}\right.
	\end{align}
where $\theta' \in [0,1]$ and the driver $f^h$ is a modification of $f$. The precise assumptions on $f^h$ are described later. The global output of the scheme is the sequence of random variables $S^{N,f^h} = \big( (Y_i,Z_i) \big)_{i=0,\cdots,N-1}$ valued in $\R^n \times \R^{n \times d}$.  The superscript $N$ is omitted since the discrete subscript $i \in \{ 0 , \ldots , N \}$ already indicates that $Y_i$ (say) refers to the numerical approximation, while $Y_\ti$ is simply the solution to the continuous-time BSDE at time $\ti$.

This (explicit) scheme corresponds to the case where the parameter $\theta=0$ in \eqref{equation--introduction--heuristic.BTZ.scheme.for.Y}, $\theta'$ being another parameter here.
Most schemes for BSDEs proposed in the literature choose $\theta' = 1$. We show in this work that the scheme converges for any $\theta' \in [0,1]$. However, a reader familiar with the analysis of continuous-time BSDEs will easily see in Sections \ref{section---size.of.scheme} and \ref{section---convergence.of.scheme} that having $\theta' = \theta $ (so, $\theta'=0$) allows to analyze the scheme  \eqref{equation---reference--definition.of.the.scheme} by mimicking more closely the continuous-time analysis, as suggested below.

\paragraph{Discrete-time martingale representation.} 
 
While $Y_i$ is defined as a conditional expectation in \eqref{equation---reference--definition.of.the.scheme}, it is useful to rewrite it using a martingale increment.
Note that such a representation was already used in \cite{ChassagneuxRichou2016}, \cite{ChassagneuxRichou2014} and \cite{BriandDelyonMemin2002}, although the way we use it in the estimates of Sections \ref{section---size.of.scheme} and \ref{section---convergence.of.scheme} differs.

	\begin{lemma} 	\label{lem:mrt} 
		Given $i \in \{0, \ldots, N-1\}$, consider a $\F_\ip$-measurable $\R^n$-valued random variable $\cY_\ip$ 
		as well as a $\F_i$-measurable $\R^{n \times d}$-valued random variable $\cZ_i$, such that $\cY_\ip + f^h(t_i,\cY_\ip,\cZ_i)h \in L^1$. 
		Define $\cY_i = \bE_i\Big[ \cY_\ip + f^h(t_i,\cY_\ip,\cZ_i)h \Big]$.
		Then,  $\cY_i$ can be written  
			\begin{align}	\label{equation---reference--definition.of.Yi.with.martingale}
				\cY_i =  \cY_\ip + f^h(t_i,\cY_\ip,\cZ_i)h  - \Delta M_\ip\ ,	
			\end{align}
		where $\bE_i[\Delta M_\ip] = 0$. 
		Moreover, there exists a unique pair $(\zeta_i,\Delta N_\ip)$, with $\zeta_i$ a $\F_i$-measurable $\R^{n \times d}$-valued random variable
		and $\Delta N_\ip$ a martingale increment orthogonal to $H_\ip h$, such that
			\begin{align}		\label{equation---reference--decomposition.of.martingale.increment}
				\Delta M_\ip = \zeta_i \Lambda^{-1} H_\ip h + \Delta N_\ip .
			\end{align}
		$\zeta_i$ is given by
			\begin{align}		\label{equation---reference--definition.of.zeta}
				\zeta_i =  \bE_i\Big[ \big( \cY_\ip + f^h(t_i,\cY_\ip,\cZ_i)h \big) H_\ip^* \Big].
			\end{align}
	\end{lemma}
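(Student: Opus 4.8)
The plan is to read the whole statement as a conditional $L^2$-projection of the centred increment $\Delta M_\ip$ onto the (conditional) subspace spanned by the components of $H_\ip h$, the non-degeneracy being supplied by the second-moment condition in \hH.

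First I would dispose of \eqref{equation---reference--definition.of.Yi.with.martingale} directly, by \emph{defining}
\begin{align*}
	\Delta M_\ip := \cY_\ip + f^h(t_i,\cY_\ip,\cZ_i)h - \cY_i ,
\end{align*}
which is exactly the rearrangement claimed. Since $\cY_i$ is $\F_i$-measurable and $\cY_i = \bE_i[\cY_\ip + f^h(t_i,\cY_\ip,\cZ_i)h]$ by hypothesis, applying $\bE_i$ gives $\bE_i[\Delta M_\ip] = \cY_i - \cY_i = 0$, so $\Delta M_\ip$ is a genuine $\F_i$-martingale increment. This step needs only the standing $L^1$ assumption.

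For \eqref{equation---reference--decomposition.of.martingale.increment}--\eqref{equation---reference--definition.of.zeta} I would prove existence by construction and then uniqueness. For existence I set $\zeta_i$ equal to the right-hand side of \eqref{equation---reference--definition.of.zeta} and define the residual $\Delta N_\ip := \Delta M_\ip - \zeta_i \Lambda^{-1} H_\ip h$, and check the two required properties. Centring is immediate: as $\zeta_i$ is $\F_i$-measurable and $\bE_i[H_\ip]=0$ (martingale-increment property in \hH), $\bE_i[\Delta N_\ip] = \bE_i[\Delta M_\ip] - \zeta_i\Lambda^{-1}\bE_i[H_\ip h] = 0$. For orthogonality to $H_\ip h$, I right-multiply by $(H_\ip h)^*$ and apply $\bE_i$; the key input is the second-moment condition $\bE_i[(H_\ip h)(H_\ip h)^*] = \Lambda h I_d$ in \hH, which gives
\begin{align*}
	\bE_i\big[ (\zeta_i \Lambda^{-1} H_\ip h)(H_\ip h)^* \big] = \zeta_i \Lambda^{-1} \Lambda h I_d = \zeta_i h .
\end{align*}
On the other hand $\bE_i[\cY_i H_\ip^*] = \cY_i\,\bE_i[H_\ip^*] = 0$ because $\cY_i$ is $\F_i$-measurable, so $\bE_i[\Delta M_\ip (H_\ip h)^*] = \bE_i[(\cY_\ip + f^h(t_i,\cY_\ip,\cZ_i)h)H_\ip^*]\,h = \zeta_i h$, which is precisely what pins $\zeta_i$ down to the formula \eqref{equation---reference--definition.of.zeta}. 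Subtracting the two displays yields $\bE_i[\Delta N_\ip (H_\ip h)^*]=0$, i.e.\ $\Delta N_\ip$ is orthogonal to $H_\ip h$.

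Uniqueness is the short final step. If $\zeta_i\Lambda^{-1}H_\ip h + \Delta N_\ip = \zeta_i'\Lambda^{-1}H_\ip h + \Delta N_\ip'$ are two admissible decompositions, then $(\zeta_i-\zeta_i')\Lambda^{-1}H_\ip h = \Delta N_\ip' - \Delta N_\ip$; right-multiplying by $(H_\ip h)^*$, applying $\bE_i$ and using orthogonality of both $\Delta N$-terms together with $\bE_i[(H_\ip h)(H_\ip h)^*]=\Lambda h I_d$ forces $(\zeta_i-\zeta_i')h = 0$, hence $\zeta_i = \zeta_i'$ and then $\Delta N_\ip = \Delta N_\ip'$. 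I expect the only genuinely delicate point to be integrability: the products $\Delta M_\ip H_\ip^*$ and $\Delta N_\ip (H_\ip h)^*$ must lie in $L^1$ for the conditional expectations and the orthogonality relation to make sense. This is automatic when $H_\ip$ is bounded (the truncated-increment case), and otherwise follows from a conditional Cauchy--Schwarz bound once $\cY_\ip + f^h(t_i,\cY_\ip,\cZ_i)h$ and $H_\ip$ are known to be in $L^2$; I would record this as the regime in which the representation is actually invoked.
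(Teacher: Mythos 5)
Your proof is correct and follows essentially the same route as the paper's: define $\Delta M_\ip$ as the residual, use $\bE_i[H_\ip]=0$ and $\bE_i[(H_\ip h)(H_\ip h)^*]=\Lambda h I_d$ to identify $\zeta_i$ as $\bE_i[\Delta M_\ip H_\ip^*]$, and obtain existence and uniqueness of the orthogonal decomposition by the same projection computation (the paper merely runs uniqueness before existence). Your closing remark on the integrability needed for the conditional expectations is a reasonable addition that the paper leaves implicit.
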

The proof of this statement is relatively straightforward. For completeness, it is is presented in Appendix \ref{appendix-proof-discrete-MRT}.

\paragraph{Derivation of the scheme.} 

Lemma \ref{lem:mrt} suggests that, if one has already chosen a time-discretization for the process $(Y_t)$ over $[\ti,\tip]$ given by 
	\begin{align}
		\label{eq:correctYscheme}
		\hat{Y}_i = \bE_i\Big[ \hat{Y}_\ip + f^h(t_i,\hat{Y}_\ip,\hat{Z}_i)h \Big] \ ,
	\end{align}
where the input $\hat{Y}_\ip$ already approximates $Y_\tip$, and $\hat{Z}_i$ is yet to-be-determined, it is more natural to choose subsequently
	\begin{align}
		\label{eq:correctZscheme}
		\hat{Z}_i = \bE_i\Big[ \big( \hat{Y}_\ip + f^h(t_i,\hat{Y}_\ip,\hat{Z}_i)h \big) H_\ip^* \Big]  = \bE_i\Big[ \big( \hat{Y}_\ip + (1-\theta') f^h(t_i,\hat{Y}_\ip,\hat{Z}_i)h \big) H_\ip^* \Big] \ ,
	\end{align}
with $\theta'=0$.
Indeed, since ``$\hat{Z}_i = \zeta_i$'', this means that $(\hat{Y}_i,\hat{Z}_i) =: \hat{S}_i(\hat{Y}_\ip)$ is defined by
	\begin{align*}
		\hat{Y}_i = \hat{Y}_\ip + f^h(t_i,\hat{Y}_\ip,\hat{Z}_i)h - \big( \hat{Z}_i \Lambda^{-1} H_\ip h + \Delta N_\ip \big) \ .
	\end{align*}
Due to the resemblance between the above equation and the continuous-time BSDE, this scheme has the advantage that the analysis carried out in the later Sections \ref{section---size.of.scheme} and \ref{section---convergence.of.scheme} is more natural to the reader accustomed with the continuous-time analysis.
The above explains why it would be natural, and later convenient for the analysis of the scheme, to take $\theta'=\theta$ (hence $\theta'=0$). 
But due to the prevalence and convenience of implementation of the choice $\theta'=1$, we study the scheme for a general $\theta' \in [0,1]$, thus covering both cases.

However, in the scheme \eqref{eq:correctYscheme}-\eqref{eq:correctZscheme}, $\hat{Z}_i$ is defined implicitly (unless $\theta'=1$), which leads to solving a nonlinear equation to compute $\hat{Z}_i$ at each step of the scheme. In theory this could be dealt with, seeing as $z \mapsto f^h(t_i,\hat{Y}_\ip,z)h$ is Lipschitz and therefore a contraction for $h$ small enough, so $\hat{Z}_i$ could be approximated by iteration. But this defeats the purpose of this work, as we aim at fully explicit schemes. Also, we want to avoid imposing restrictions on the size of the time-steps.
So we replace $f^h(t_i,\hat{Y}_\ip,\hat{Z}_i)h$ by $f^h(t_i,{Y}_\ip,0)h$ in our scheme \eqref{equation---reference--definition.of.the.scheme}.
Given the Lipschitz dependence of $f$ in $z$, this creates an error that should not affect the convergence rate of the scheme, which we aim to be $\half$, as for the implicit scheme.

To be able to take advantage of Lemma \ref{lem:mrt} and the natural character of scheme \eqref{eq:correctYscheme}-\eqref{eq:correctZscheme} when analysing scheme \eqref{equation---reference--definition.of.the.scheme}, we introduce the random variable $D_i:=Z_i-\zeta_i$, where $\zeta_i$ is given by \eqref{equation---reference--definition.of.zeta} with $\cY_\ip = Y_\ip$ and $\cZ_i = Z_i$.
This $D_i$ measures the difference between the theoretically-natural scheme and the scheme used in practice.
The proofs of Lemma \ref{lemma-EstimationofGrowthwithStepRestriction} and Proposition \ref{proposition---one-step.size.estimate} are written in such a way that the interested reader can easily track the consequence of $D_i \neq 0$. For the scheme  \eqref{eq:correctYscheme}-\eqref{eq:correctZscheme}, the analogue $\hat{D}_i:=\hat{Z}_i-\hat{\zeta}_i$, where $\hat{\zeta}_i$ is given by \eqref{equation---reference--definition.of.zeta} with $\cY_\ip = \hat{Y}_\ip$ and $\cZ_i = \hat{Z}_i$, is null.

\subsubsection{Assumptions on the tamed driver and comments} \label{sec:fh}

We now introduce the general assumptions on the tamed driver $f^h$.  They summarize the fact that we want $f^h$ to enjoy most of the properties of $f$, in particular to preserve as much as possible the monotonicity of \hfmon and \hfmongrowth, but with the polynomial growth of $f$ and its local Lipschitz constant tamed, see \hfgrowth and \hfregY.

These abstract assumptions can be split in three categories. First, we have the growth conditions \htgrowth and \htmongrowth which ensure the non-explosion of the scheme. Second, the monotonicity and regularity conditions \htreg, \htregY and \htmon will ensure the stability of the scheme. Finally, \htfcvg ensures the convergence of the scheme.
Recall that $h \in (0,T]$ and we are interested in doing $h \goesto 0$.

\subsubsection*{Assumptions on the growth}

\begin{itemize}
	\item[\htgrowth] 
		There exist $K^h_t$, $K^h_y$ and $K^h_z$ $\geq 0$ such that,
		for all $(t,y,z) \in [0,T] \times \R^n \times \R^{n \times d}$, 
			\begin{align*}	
				\abs{f^h(t,y,z)} \le K^h_t  + K^h_y \abs{y} + K^h_z \abs{z} \ .
			\end{align*} 
		The constants $K^h_t$, $K^h_y$ and $K^h_z$ may depend on $h$ but in such a way that $(K^h_t)^2 h$, $(K^h_y)^2 h$ and $K^h_z$ are bounded in $h$.
		Also, $\abs{f^h(t,y,z)} \le \abs{f(t,y,z)}$.

	\item[\htmongrowth] There exist $\bar{M}^h_t, \bar{M}^h_z \geq 0$ and $\bar{M}^h_y \in \bR$ such that,
		for all $(t,y,z) \in [0,T] \times \R^n \times \R^{n \times d}$, 
			\begin{align*}
				\scalar{y}{f(t,y,z)} \le  \bar{M}^h_t  +  \bar{M}^h_y \abs{y}^2 +  \bar{M}^h_z  \abs{z}^2 \ .
			\end{align*}
		The constants  $\bar{M}^h_t, \bar{M}^h_y, \bar{M}^h_z$ may depend on $h$, but are bounded in $h$.
\end{itemize}

\subsubsection*{Assumptions on the regularity}

\begin{itemize}
	\item[\htreg] There exist $L^h_t, L^h_z \geq  0$ such that,
		for all $t,t',y,z,z'$,
			\begin{align*}	
				\abs{f^h(t',y,z') - f^h(t,y,z)} \le L^h_t \abs{t'-t}^\half + L^h_z\abs{z'-z} \ .
			\end{align*}		
		$L^h_t$ and $L^h_z$ may depend on $h$, but in a bounded way.

	\item[\htregY] There exist $L^h_y \geq 0$ and a positive function $\RregY$ satisfying \htfcvg such that 
		for all $t,y,y',z$,
			\begin{align*} 	
				\abs{f^{h}(t,y',z) - f^{h}(t,y,z)} \le L^h_y  \abs{y'-y} + \RregY(t,y',y,z).
			\end{align*}	
		$L^h_y$ may depend on $h$, but in such a way that $(L^h_y)^2h$ is bounded in $h$.

	\item[\htmon] There exists $M^h_y \in \R$ and a function $\Rmon$ satisfying \htfcvg such that
		for all $t,y,y',z$,
			\begin{align*}	
				\scalar{y'-y}{f^h(t,y',z) - f^h(t,y,z)} \le M^h_y \abs{y'-y}^2 + \Rmon(t,y',y,z) \ .
			\end{align*}
		$M^h_y$ may depend on $h$, but in a bounded way.
\end{itemize}

\subsubsection*{Assumptions on the convergence}

We need to ensure that $f^h \to f$ as $h\to 0$. This is in some sense a \emph{consistency condition}, ensuring that the output of the scheme converges to the solution to the correct BSDE, and not a BSDE with a different driver. We introduce for this $R^h = f-f^h$.
Also, we need the remainders $\RregY$ and $\Rmon$ to vanish sufficiently fast, so as not to prevent convergence of the scheme.
The following assumption guarantees that $R^h$, $\RregY$ and $\Rmon$ converge to zero. In its statement, $\Rbla$ stands for both of the remainders $\RregY$ and $\Rmon$.
	\begin{itemize}
		\item[\htfcvg] One of the following holds. 
			\begin{enumerate}
				 \item There exist constants $C \ge 0$, $p,q \ge 1$ and $\alpha > 0$ such that for any $y',y,z$
					 \begin{align*}
						 & \abs{R^h(t,y,z)} \le C \ \big( 1 + \abs{y}^{q} + \abs{z}^p \big) \ h^{\alpha} 																											\\ 
						 & \Rbla(t,y',y,z) \leq C \big( 1 + \abs{y'}^{q} + \abs{y}^{q} + \abs{z}^p \big) h^{\alpha}	.
					 \end{align*}
				
				\item There exist constants $C \ge 0$, $p,q \ge 1$, $r_0>0$ and $\beta > 0$ such that, with $r(h)=r_0 h^{-\beta}$, for any $y',y,z$
					\begin{align*}
						&\abs{R^h(t,y,z)} \le C\ \big( 1 + \abs{y}^{q} + \abs{z}^q \big) \ \1_{\{\Abs{f(t,y,z)}>r(h)\}} 																						\\
						& \Rbla(t,y',y,z) \le C \big( 1 + \abs{y'}^{q} + \abs{y}^{q} + \abs{z}^p \big) \ \1_{\{ \abs{f(t,y',z)} > r(h) \text{ or } \abs{f(t,y,z)} > r(h) \}}	.
					\end{align*}

				\item There exist constants $C \ge 0$, $p,q \ge 1$, $r_0>0$ and $\gamma > 0$ such that, with $r(h)=r_0 h^{-\gamma}$, for any $y',y,z$
					\begin{align*}
						& \abs{R^h(t,y,z)} \le C \ \big( 1 + \abs{y}^{q} + \abs{z}^p \big) \ \1_{\{\abs{y}>r(h)\}} 																								\\
						& \Rbla(t,y',y,z) \le C \big( 1 + \abs{y'}^{q} + \abs{y}^{q} + \abs{z}^p \big) \ \1_{\{ \abs{y'} > r(h) \text{ or } \abs{y} > r(h) \}}	.
					\end{align*}						
			\end{enumerate}
	\end{itemize}

	\begin{remark}
		The above \htfcvg implies that $f^h \to f$ as $h\to 0$ pointwise. 
		We would expect from the stability theorems for continuous-time BSDEs that the solutions would converge if $f^h \to f$ uniformly on compacts, which \htfcvg also implies. 
		However, in order to obtain convergence rates for the scheme, stronger assumptions are needed.
	\end{remark}

\subsection{Main result and outline of the proof}
\label{sec:PC}

The path-regularity theorem (see Theorem \ref{theorem---path.regularity}) implies that the distance between the solution $(Y_t,Z_t)_{t\in[0,T]}$ and its projection on the grid, $(Y_\ti, \overline{Z}_\ti)_{i=0 \ldots N-1}$, is of order $h^{1/2}$, where $\overline{Z}_\ti $ is defined as
\begin{align*} 
		\overline{Z}_\ti = \bE_i\Big[ \frac{1}{h} \int_\ti^\tip Z_u \ud u \Big] 
			&= \bE_i\left[ \int_\ti^\tip Z_u \ud W_u \ \frac{\Delta W_\tip^*}{h} \right] 				\\
		\nonumber
		&= \bE_i\left[ \Big( Y_\ip + \int_\ti^\tip f(u,Y_u,Z_u)\udu \Big) 	\ \frac{\Delta W_\tip^*}{h} \right] \ .
	\end{align*}

We measure the distance between the numerical approximation $(Y_i,Z_i)_{i=0,\ldots,N}$ and the solution of the BSDE with the following error criterion : 
	\begin{align*}	
		\mathrm{ERR}_N = \bigg( \sup_{i=0, \ldots, N} \bE\Big[ \, \abs{Y_\ti - Y_i}^2\, \Big] + \bE\bigg[ \sum_{i=0}^{N-1} \abs{\overline{Z}_\ti - Z_i}^2 h \bigg] \bigg)^{1/2}.
	\end{align*}

\paragraph{Main Result.}

Our principal result, which Sections \ref{section---size.of.scheme} and \ref{section---convergence.of.scheme} are devoted to proving, is that if the driver $f$ is tamed is such a way that the assumption of the previous subsection are satisfied, then the resulting scheme converges. 
Specifically, we define the rate $\mu$ as follows.
	\begin{itemize}
		\item If \htfcvg.1 is satisfied, with $q,p \ge 1$ and $\alpha > 0$, then $\mu = \alpha$.
		\item If \htfcvg.2 is satisfied, with $q,p \ge 1$ and $\beta > 0$, then $\mu = \frac{\beta l}{2}$, for arbitrary $l \ge 1$. 
				Since $\beta > 0$, we will take $l$ such that $\mu \ge 1$.
		\item If \htfcvg.3 is satisfied, with $q,p \ge 1$ and $\gamma > 0$, then $\mu = \frac{\gamma l}{2}$, for arbitrary $l \ge 1$. 
				Since $\gamma > 0$, we will take $l$ such that $\mu \ge 1$.
	\end{itemize}

	\begin{theorem}	\label{theorem---main--convergence.of.tamed.scheme}	
		Assume that $f^h$ satisfies \htgrowth, \htmongrowth, \htreg, \htregY, \htmon and \htfcvg. 
		Then the scheme \eqref{equation---reference--definition.of.the.scheme} converges, i.e. $\mathrm{ERR}_N \goesto 0$ as $N \goesto +\infty$. 
				
		More precisely, there exists a constant $C$ (independent of $N$) such that
			\begin{align*}
				\mathrm{ERR}_N^2 \le C \, h + C \ h^{\mu} .
			\end{align*}
	\end{theorem}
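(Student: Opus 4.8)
The plan is to follow the classical three-stage programme for the analysis of BSDE schemes --- uniform moment bounds, one-step stability, and one-step discretization error followed by summation --- but to carry it out in a way that mimics the continuous-time \emph{a priori} estimates, using the monotonicity conditions \htmon and \htmongrowth in place of crude Lipschitz estimates. This is the essential point: as explained around Figure \ref{fig:1}, any argument routed through the Lipschitz constant $L^h_y$ of $f^h$ is doomed, since $L^h_y \goesto +\infty$ as $h \goesto 0$ and would appear inside an exponential. Monotonicity is precisely what lets one absorb the superlinear terms into a per-step factor $(1+Ch)$ rather than an exponential of an exploding constant.

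First I would establish, as in Section \ref{section---size.of.scheme}, uniform moment bounds $\sup_i \bE[\abs{Y_i}^p] \le C_p$ for every $p \ge 2$, with $C_p$ independent of $N$. The mechanism is to rewrite one step through the martingale representation of Lemma \ref{lem:mrt}, $Y_i = Y_\ip + f^h(\ti,Y_\ip,Z_i)h - \Delta M_\ip$, take conditional expectations of $\abs{Y_i}^2$ (and higher powers), and control $\scalar{Y_i}{f^h(\ti,Y_i,Z_i)}$ via \htmongrowth; since the taming keeps $(K^h_t)^2h$, $(K^h_y)^2h$ and $K^h_z$ bounded in $h$, the superlinear growth no longer compounds, and a discrete Gr\"onwall inequality closes the estimate. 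These high moments, available for \emph{arbitrary} $p$, are exactly what the convergence step will later trade for speed.

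Next comes the stability and discretization analysis of Section \ref{section---convergence.of.scheme}. Writing $\delta Y_i = Y_\ti - Y_i$, I would compare one step of the scheme with the exact relation $Y_\ti = \bE_i[Y_\tip + \int_\ti^\tip f(u,Y_u,Z_u)\udu]$, putting both in martingale form via Lemma \ref{lem:mrt}, squaring, and using \htmon to handle the $y$-increment of $f^h$. This yields a one-step estimate of the form $\bE[\abs{\delta Y_i}^2] + c\,\bE[\abs{\delta Z_i}^2]h \le (1+Ch)\,\bE[\abs{\delta Y_\ip}^2] + \bE[\abs{\varepsilon_\ip}^2]$, where $\varepsilon_\ip$ collects the one-step errors. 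These errors are of three kinds: (i) the time-Euler error from replacing $\int_\ti^\tip f(u,Y_u,Z_u)\udu$ by $f(\ti,Y_\ip,Z_i)h$ and $Z_u$ by $Z_i$, which sums to $O(h)$ by the path-regularity Theorem \ref{theorem---path.regularity} together with \htreg; (ii) the error from replacing $f$ by $f^h$, governed by $R^h = f - f^h$ and the remainders $\RregY$, $\Rmon$; and (iii) the error $D_i = Z_i - \zeta_i$ from the explicit treatment of $z$, which is $O(h)$ because $f$ is Lipschitz in $z$. Summing the one-step estimate by discrete Gr\"onwall and using the terminal bound from \hxiN gives $\mathrm{ERR}_N^2 \le C\,h + C\sum_{i=0}^{N-1}\bE[\abs{R^h}^2 + \Rbla^2]\,h$.

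The main obstacle is bounding the taming contribution $\sum_i \bE[\abs{R^h}^2 + \Rbla^2]\,h$ by $Ch^{\mu}$, and this is where the three cases of \htfcvg combine with the arbitrary-order moment bounds from the first stage. Under \htfcvg.1 the estimate is direct: the polynomial-in-$(y,z)$ bound times $h^{2\alpha}$ is integrable by the moment bounds, giving $\mu = \alpha$. Under \htfcvg.2 and \htfcvg.3 the remainders are supported on the rare events $\{\abs{f} > r(h)\}$ or $\{\abs{y} > r(h)\}$ with $r(h) = r_0 h^{-\beta}$ (resp.\ $h^{-\gamma}$); here I would apply Cauchy--Schwarz followed by a Chebyshev/Markov bound, $\bP(\abs{Y_\ip} > r(h)) \le \bE[\abs{Y_\ip}^{2l}]/r(h)^{2l} \le C h^{2\beta l}$, so that choosing $l$ as large as one pleases yields $\mu = \beta l/2$ (resp.\ $\gamma l/2$). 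This trade-off --- spending an arbitrarily high moment, which taming guarantees is finite and uniformly bounded, in exchange for an arbitrarily good power of $h$ on a set of small probability --- is the crux of the argument, and it is precisely what is unavailable for the untamed explicit scheme.
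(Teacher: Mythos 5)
Your proposal is correct and follows essentially the same route as the paper: uniform moment bounds obtained from \htmongrowth, a one-step stability estimate driven by \htmon (so that the exploding Lipschitz constant $L^h_y$ never enters an exponential), one-step consistency errors controlled by the path-regularity theorem, and the Cauchy--Schwarz/Markov trade of arbitrarily high moments against powers of $h$ to handle the indicator-supported remainders in cases 2 and 3 of \htfcvg. The only ingredients you gloss over are that the remainders $\RregY$ and $\Rmon$ are evaluated at $\widehat{Z}_i$, so the Markov step also needs uniform-in-$i$ bounds on $\bE[\abs{\widehat{Z}_i}^{2p}]$ (which the paper must prove separately in Proposition \ref{lemm:integrabilityWidehatZi}, using the pointwise bound $\abs{Z_t}\le C(1+\abs{X_t})$ from the continuous-time theory), and that $\Rmon$ enters the stability estimate linearly, as $\bE[\Rmon]\,h$ rather than squared as your displayed inequality suggests --- this is what yields $\mu=\alpha$ (not $2\alpha$) in case 1, consistent with the rate you state in your final paragraph.
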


Having $\mu > 0$ guarantees convergence of the scheme. Having $\mu \ge 1$ guarantees that taming the driver does not slow down the convergence and that the standard convergence rate of the implicit scheme can be recovered.

\paragraph{Outline of the proof.}

We follow a standard strategy which consists in seeing the error at time $\ti$ as resulting from the one-step time-discretization error ---by how much the BSDE and the scheme differ over one time-step when initialized with the same input--- and the propagation to time $\ti$ of the error already present at time $\tip$ ---the control of this latter error coming from the stability of the scheme.

To express this, we introduce the family of random variables $\big( \widehat{Y}_i,\widehat{Z}_i \big)_{i = 0, \ldots, N-1}$ defined, for all $i$, by 
	\begin{align}
		\label{eq:SchemeOutputWithTrueInput-Y}
			\widehat{Y}_i &= \bE_i\left[ Y_\tip + f^h(t_i,Y_\tip,\widehat{Z}_i)h \right]				\\
  	 	\label{eq:SchemeOutputWithTrueInput-Z}
			\widehat{Z}_i &= \bE_i\left[ \Big(Y_\tip + (1-\theta') f^h(t_i,Y_\tip,0)h \Big) H_\ip^* \right] \ .
				\end{align}
Otherwise said, $(\widehat{Y}_i,\widehat{Z}_i)$ is the output of one step of the scheme \eqref{equation---reference--definition.of.the.scheme} when the input is $Y_\tip$, the value of the solution at the time $\tip$.
Then, the above mentioned decomposition of the error at time $\ti$ writes as
	\begin{align*}
		Y_\ti-Y_i
			= \underbrace{ Y_\ti -	\widehat{Y}_i }_\text{one-step error} 
				+ \underbrace{ \widehat{Y}_i -  Y_{i} }_\text{propagation of error}, 
		\quad \text{and} \quad 
		\bar{Z}_\ti-{Z}_i	
			= 	\underbrace{ \bar{Z}_\ti - \widehat{Z}_i }_\text{one-step error}
				+ \underbrace{ \widehat{Z}_i -  Z_{i} }_\text{propagation of error}.
	\end{align*}
	
For the time-discretization errors, we define	
	\begin{align*}
		\tau_i(Y) = \bE\big[ \abs{Y_\ti - \widehat{Y}_i}^2 \big] \qquad\text{and}\qquad \tau_i(Z) = \bE\big[ \abs{\overline{Z}_\ti - \widehat{Z}_i}^2 \big] h .
	\end{align*} 
To study the propagation of errors, we introduce the following notion of stability.

	\begin{definition} \label{definition---almost.stability}
		We say that the	scheme \eqref{equation---reference--definition.of.the.scheme} is \emph{almost-stable} if there exist constants $c$ and $C$, independent of $N$, 
		such that for all $i \in \{0, \ldots , N-1\}$
			\begin{align*}
				\bE[\abs{\widehat{Y}_i - Y_i}^2] + \frac{1}{4} \bE[\abs{\widehat{Z}_i - Z_i}^2]  d h  
					\le (1+c \, h) \bE[\abs{Y_\tip - Y_\ip}^2]  + C h^{\mu+1} . 
			\end{align*}
		The terms $C h^{\mu+1}$ are the \emph{stability imperfections}.
	\end{definition}

From the fundamental lemma below, the global error is then controlled by three terms. The first is the error made on approximating the terminal condition. The second is the sum, essentially, of the one-step discretization errors, $\sum_{i=0}^{N-1} \frac{\tau_i(Y)}{h}   + \tau_i(Z)$. The third is the total contribution of the stability imperfections, and is of order $h^\mu$.

	\begin{lemma}[Fundamental Lemma]		\label{lem:fundamental}
		Assume that the scheme \eqref{equation---reference--definition.of.the.scheme} is almost-stable.
		Then there exist a constant $C \ge 0$ such that, for all $N \ge 1$, 
			\begin{align*}
				\big(\mathrm{ERR}_N\big)^2 \le C \bE[\,\abs{\xi - \xi^N}^2] + C \Bigg( \sum_{i=0}^{N-1} \frac{\tau_i(Y)}{h}   + \tau_i(Z)  \Bigg)  + C h^\mu .
			\end{align*}	
	\end{lemma}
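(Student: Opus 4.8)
The plan is to run a discrete Gronwall argument on the squared $Y$-errors $e_i := \bE[\abs{Y_\ti - Y_i}^2]$, exploiting the decomposition $Y_\ti - Y_i = (Y_\ti - \widehat{Y}_i) + (\widehat{Y}_i - Y_i)$ together with the almost-stability estimate of Definition~\ref{definition---almost.stability}, and then to recover the summed $Z$-error by a telescoping argument. First I would apply Young's inequality $\abs{a+b}^2 \le (1+\gamma h)\abs{b}^2 + (1+\tfrac{1}{\gamma h})\abs{a}^2$ with $a = Y_\ti - \widehat{Y}_i$ and $b = \widehat{Y}_i - Y_i$, giving
\[
	e_i \le (1+\gamma h)\,\bE[\abs{\widehat{Y}_i - Y_i}^2] + \Big(1 + \tfrac{1}{\gamma h}\Big)\tau_i(Y).
\]
Substituting the almost-stability bound for $\bE[\abs{\widehat{Y}_i - Y_i}^2]$ while retaining the nonnegative $Z$-term, and multiplying through by $(1+\gamma h)$, I would obtain a single per-step recursion
\[
	e_i + \tfrac{d}{4}\,\bE[\abs{\widehat{Z}_i - Z_i}^2]\,h \le (1+C'h)\,e_\ip + \Big(1+\tfrac{1}{\gamma h}\Big)\tau_i(Y) + C\,h^{\mu+1},
\]
where $C'$ collects $\gamma$, $c$ and their product (the cross term $\gamma c h^2$ is absorbed since $h \le T$), and where the Young parameter $\gamma$ is fixed independently of $h$.

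Second, I would drop the nonnegative $Z$-term and unroll the scalar inequality $e_i \le (1+C'h)e_\ip + (1+\tfrac{1}{\gamma h})\tau_i(Y) + C h^{\mu+1}$ from an arbitrary index $k$ up to the terminal index $N$. Since $(1+C'h)^{N-k} \le e^{C'T}$ is bounded uniformly in $N$, since $e_N = \bE[\abs{\xi - \xi^N}^2]$ (because $Y_{t_N}=\xi$ and $Y_N = \xi^N$), since the $N$ copies of $C h^{\mu+1}$ sum to $CT h^\mu$, and since $1 + \tfrac{1}{\gamma h} \le C/h$ for $h \le T$, this yields the $Y$-part of the claim:
\[
	\sup_{0 \le k \le N} e_k \le C\,\bE[\abs{\xi - \xi^N}^2] + C\sum_{i=0}^{N-1}\frac{\tau_i(Y)}{h} + C\,h^\mu.
\]

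Third, to handle $\sum_i \bE[\abs{\overline{Z}_\ti - Z_i}^2]h$, I would return to the per-step recursion, isolate $\tfrac{d}{4}\bE[\abs{\widehat{Z}_i - Z_i}^2]h \le (1+C'h)e_\ip - e_i + (1+\tfrac{1}{\gamma h})\tau_i(Y) + C h^{\mu+1}$, and sum over $i$. The telescoping sum $\sum_{i=0}^{N-1}[(1+C'h)e_\ip - e_i] \le C'h\sum_{i=1}^{N-1}e_i + (1+C'h)e_N$ is the crux: the first piece is bounded by $C'T\sup_k e_k$ because $h\sum_i 1 \le T$, hence is controlled by the $Y$-bound above, and the second is $2\,\bE[\abs{\xi-\xi^N}^2]$. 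This delivers a bound of the required form on $\sum_i \bE[\abs{\widehat{Z}_i - Z_i}^2]h$. A final triangle inequality $\abs{\overline{Z}_\ti - Z_i}^2 \le 2\abs{\overline{Z}_\ti - \widehat{Z}_i}^2 + 2\abs{\widehat{Z}_i - Z_i}^2$, whose first term sums to $2\sum_i \tau_i(Z)$, then yields the $Z$-part, and adding the two parts closes the proof.

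I expect the main obstacle to be precisely the bookkeeping that prevents the summed $Z$-error from blowing up: naively bounding each $\bE[\abs{\widehat{Z}_i - Z_i}^2]h$ by the almost-stability right-hand side and summing would produce $\sum_i e_\ip = O(h^{-1}\sup_k e_k)$, which is far too large. The telescoping structure of the combined recursion is what converts the accumulated propagation terms into a single boundary contribution $e_N$ plus the harmless $C'h\sum_i e_i = O(\sup_k e_k)$. The secondary delicate point is the calibration of $\gamma$, which must simultaneously keep the contraction factor at $1 + O(h)$ and normalise the one-step $Y$-errors to the form $\tau_i(Y)/h$.
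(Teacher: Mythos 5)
Your proof is correct and follows essentially the same route as the paper: the paper combines the Young-inequality decomposition, the almost-stability bound and the $Z$-triangle inequality into a single per-step recursion $a_i + b_i \le (1+ch)a_{i+1} + c_i$ and then invokes its iteration lemma (Lemma \ref{lemma---iterating.a.one.step.estimate}), whose proof contains exactly the telescoping mechanism you carry out by hand to control $\sum_i b_i$. The only differences are cosmetic: you run the $Y$-Gronwall and the $Z$-telescoping as two separate passes and defer the triangle inequality $\abs{\overline{Z}_\ti - Z_i}^2 \le 2\abs{\overline{Z}_\ti - \widehat{Z}_i}^2 + 2\abs{\widehat{Z}_i - Z_i}^2$ to the end, whereas the paper folds $\tau_i(Z)$ into the recursion from the start.
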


We place the proof in Appendix \ref{sec--proof.of.lemmas---iteration.and.fundamental}.
From \hxiN, the first term is known to be of order $h$. So Theorem \ref{theorem---main--convergence.of.tamed.scheme} will be proved if we can prove that the sum of discretization errors if of order $h$ and that the scheme is almost-stable.

	\begin{remark}[On the almost-stability]
		If one could take $C=0$ in the definition \ref{definition---almost.stability} of the almost-stability, then the scheme would be \emph{stable}, in the usual sense. 
		This is the case for the standard explicit and implicit (BTZ) schemes for BSDEs with Lipchitz drivers.
	\end{remark}

\section{Size estimates and non-explosion of the schemes}  
\label{section---size.of.scheme}

In this section, we undertake the size-analysis for the modified explicit scheme \eqref{equation---reference--definition.of.the.scheme} and show that, under our assumptions, it cannot explode. Specifically, we obtain bounds on the $p$-moments of the scheme that are uniform in $N$. For this, we first carry out the size-analysis for \emph{one step} of the scheme. Thanks to the linear growth \emph{and} the monotone growth of the driver $f^h$, we obtain an estimate which, unlike that for the explicit BTZ scheme (see \cite{LionnetReisSzpruch2015}), 
can satisfactorily be iterated. This then leads to an almost-sure and uniform-in-$N$ global bound, which in turn leads to bounds on the moments.

\subsection{The one-step almost-sure estimate}

The first results are useful estimates about the size of the output of the scheme over a single time-step. For clarity of the computations we first prove the result for scheme \eqref{eq:correctYscheme}-\eqref{eq:correctZscheme} (which has $\hat{D}_i=0$) in Lemma \ref{lemma-EstimationofGrowthwithStepRestriction} then extend it to the scheme \eqref{equation---reference--definition.of.the.scheme} (for which $D_i = Z_i-\zeta_i \neq 0$) in Lemma \ref{proposition---one-step.size.estimate}. We do this in order to show the differences in the estimations. 
Also, the first result needs a smallness assumption on the step size $h$ while the second, due to the enhanced estimation, \emph{does not}.  

  \begin{lemma}	\label{lemma-EstimationofGrowthwithStepRestriction}	
		Assume that the driver $f^h$ satisfies \htgrowth and \htmongrowth with $\bar{M}^h_z \le \frac{d}{8}$. Let $h$ be such that $h \le h_0:=\frac{d}{24 (K^h_z)^2}$.
		Then there exist constants $c, C \ge 0$ such that, for any $i\in \{0, \ldots, N-1\}$, and for any random variable $Y_\ip \in L^2(\F_\ip)$, 
		with $(Y_i,Z_i)$ the output of scheme \eqref{eq:correctYscheme}-\eqref{eq:correctZscheme} for the input $Y_\ip$, one has
			\begin{align*}
				\abs{Y_i}^2 + \frac{1}{8} \abs{Z_i}^2  d h+ \bE_i[\Delta N_\ip^2] &\le \big( 1 +  c h \big) \bE_i[\abs{Y_\ip}^2] + C h.
			\end{align*}	
		The constants $c$ and $C$ are uniform in $N$. 
	\end{lemma}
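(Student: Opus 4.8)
The plan is to start from the martingale representation of Lemma~\ref{lem:mrt}. Applied to the output $(Y_i,Z_i)$ of scheme \eqref{eq:correctYscheme}-\eqref{eq:correctZscheme}, it writes $Y_i = Y_\ip + f^h(t_i,Y_\ip,Z_i)h - \Delta M_\ip$ with $\bE_i[\Delta M_\ip]=0$ and $\Delta M_\ip = \zeta_i\Lambda^{-1}H_\ip h + \Delta N_\ip$, where $\Delta N_\ip$ is orthogonal to $H_\ip h$; crucially, since $\theta'=0$ here, the scheme's $Z_i$ coincides with $\zeta_i$. The cleanest way to exploit this is the conditional $L^2$ (Pythagoras) identity: writing $A := Y_\ip + f^h(t_i,Y_\ip,Z_i)h$ so that $Y_i=\bE_i[A]$ and $\Delta M_\ip = A - \bE_i[A]$, one has $\bE_i[\abs{A}^2] = \abs{Y_i}^2 + \bE_i[\abs{\Delta M_\ip}^2]$. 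This turns the desired estimate into an identity relating $\abs{Y_i}^2$, the martingale mass, and $\bE_i[\abs{A}^2]$, and avoids any delicate cross-term bookkeeping with $\Delta M_\ip$.

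Next I would expand both sides. On one side, $\bE_i[\abs{A}^2] = \bE_i[\abs{Y_\ip}^2] + 2h\,\bE_i[\scalar{Y_\ip}{f^h(t_i,Y_\ip,Z_i)}] + h^2\,\bE_i[\abs{f^h(t_i,Y_\ip,Z_i)}^2]$. On the other, the orthogonality of $\Delta N_\ip$ to $H_\ip h$ gives $\bE_i[\abs{\Delta M_\ip}^2] = \bE_i[\abs{Z_i\Lambda^{-1}H_\ip h}^2] + \bE_i[\abs{\Delta N_\ip}^2]$, and \hH\,(2) evaluates the first summand as an explicit positive multiple of $h\abs{Z_i}^2$. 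I would then insert the two structural bounds: \htmongrowth to control $\scalar{Y_\ip}{f^h(t_i,Y_\ip,Z_i)} \le \bar{M}^h_t + \bar{M}^h_y\abs{Y_\ip}^2 + \bar{M}^h_z\abs{Z_i}^2$ (recalling that $Z_i$ is $\F_\ti$-measurable, so $\bE_i[\abs{Z_i}^2]=\abs{Z_i}^2$), and \htgrowth together with $(a+b+c)^2\le 3(a^2+b^2+c^2)$ to bound $\abs{f^h}^2 \le 3\big((K^h_t)^2 + (K^h_y)^2\abs{Y_\ip}^2 + (K^h_z)^2\abs{Z_i}^2\big)$.

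The decisive step, and the main obstacle, is the bookkeeping of the $\abs{Z_i}^2$ terms. After substitution, $\abs{Z_i}^2$ appears with a beneficial coefficient coming from the martingale mass on the left, and with positive coefficients $2\bar{M}^h_z h$ (from \htmongrowth) and $3(K^h_z)^2 h^2$ (from \htgrowth) on the right. The two smallness hypotheses are calibrated exactly to win this competition: $\bar{M}^h_z \le \tfrac{d}{8}$ keeps the monotone-growth contribution below a fixed fraction of the martingale term, while the step restriction $h \le h_0 = \tfrac{d}{24(K^h_z)^2}$ forces $3(K^h_z)^2h^2$ to be a comparably small fraction, so that a net coefficient of at least $\tfrac{1}{8}d\,h$ survives on the left. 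This is the only place the restriction $h\le h_0$ is used; as the paper notes, the enhanced estimate of Lemma~\ref{proposition---one-step.size.estimate} will later remove it.

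Finally, the surviving $\abs{Y_\ip}^2$ and constant terms assemble into the claimed form. The coefficient of $\bE_i[\abs{Y_\ip}^2]$ is $1 + \big(2\bar{M}^h_y + 3h(K^h_y)^2\big)h$ and the additive constant is $\big(2\bar{M}^h_t + 3h(K^h_t)^2\big)h$; both prefactors are bounded uniformly in $N$ because \htmongrowth makes $\bar{M}^h_y,\bar{M}^h_t$ bounded in $h$ and \htgrowth makes $(K^h_y)^2h$ and $(K^h_t)^2h$ bounded in $h$. Setting $c := 2\bar{M}^h_y + 3h(K^h_y)^2$ and $C := 2\bar{M}^h_t + 3h(K^h_t)^2$, and keeping $\bE_i[\abs{\Delta N_\ip}^2]\ge 0$ on the left, yields exactly the asserted bound with $N$-uniform constants $c,C$.
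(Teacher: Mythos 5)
Your proposal is correct and follows essentially the same route as the paper: the conditional Pythagoras identity for $Y_i+\Delta M_\ip = Y_\ip + f^h(t_i,Y_\ip,Z_i)h$, the orthogonal decomposition of $\Delta M_\ip$ via \hH, the insertion of \htmongrowth and \htgrowth, and the calibration of $\bar{M}^h_z \le d/8$ and $h\le h_0$ to absorb the $\abs{Z_i}^2$ contributions into the martingale mass. The only (immaterial) difference is that you use $Z_i=\zeta_i$ from the outset, whereas the paper carries the term $D_i$ through a Young inequality and only sets $D_i=0$ at the end so that the computation can be reused for Proposition \ref{proposition---one-step.size.estimate}.
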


	\begin{proof}
		By Lemma \ref{lem:mrt} we write 
			\begin{align*}
				Y_i & = Y_\ip + f^h(t_i,Y_\ip,Z_i)h - \Delta M_\ip 			\\
					&= Y_\ip + f^h(t_i,Y_\ip,Z_i)h - \big( ( Z_i - D_i )\Lambda^{-1} H_\ip h + \Delta N_\ip \big) \ .
			\end{align*}
		Squaring $Y_i + \Delta M_\ip = Y_\ip + f^h(t_i,Y_\ip,Z_i) h$ and taking $\bE_i$, we have
			\begin{align*}
				\abs{Y_i}^2 + \bE_i[\abs{\Delta M_\ip}^2] &= \bE_i[\abs{Y_\ip}^2] + \bE_i\Big[ 2\scalar{Y_\ip}{f^h(t_i,Y_\ip,Z_i)} \Big] h + \bE_i\Big[ \abs{f^h(t_i,Y_\ip,Z_i)}^2 \Big] h^2	 \ .					
			\end{align*}
		Using \htmongrowth leads to 
			\begin{align*}
				\abs{Y_i}^2 &+ \bE_i[\abs{\Delta M_\ip}^2]																																												\\
					&\le \Big( 1 + 2\bar{M}^h_y h \Big)\bE_i[\abs{Y_\ip}^2] + 2\bar{M}^h_t  h + 2\bar{M}^h_z \abs{Z_i}^2 h + \bE_i\Big[ \abs{f^h(t_i,Y_\ip,Z_i)}^2 \Big] h^2 	  .
			\end{align*}
		Due to the orthogonality of $H_\ip h$ and $\Delta N_\ip$ and using a Young inequality, we have
			\begin{align*}
				\bE_i[\abs{\Delta M_\ip}^2] &= \abs{\zeta_i}^2 \Lambda^{-2} \bE_i[\abs{H_\ip h}^2] + \bE_i[\abs{\Delta N_\ip}^2]																	\\
					&= \abs{Z_i - D_i}^2 \Lambda^{-1} d h + \bE_i[\abs{\Delta N_\ip}^2]																																	\\
					&\ge \half \abs{Z_i}^2  \Lambda^{-1} d h - \abs{D_i}^2 \Lambda^{-1} d h  + \bE_i[\abs{\Delta N_\ip}^2] .
			\end{align*}		
		So the estimate currently yields
			\begin{align*}
				\abs{Y_i}^2 +& \half \abs{Z_i}^2 \Lambda^{-1} d h  - \abs{D_i}^2  \Lambda^{-1} d h + \bE_i[\abs{\Delta N_\ip}^2]																											\\
					&\le \Big( 1 + 2\bar{M}^h_y  h \Big)\bE_i[\abs{Y_\ip}^2] + 2\bar{M}^h_t  h + 2\bar{M}^h_z \abs{Z_i}^2 h 	+ \bE_i\Big[ \abs{f^h(t_i,Y_\ip,Z_i)}^2 \Big] h^2,
			\end{align*}
		hence, since $1 \le \Lambda^{-1} \le 2$,
			\begin{align*}
				\abs{Y_i}^2 +& \Big(\half - \frac{2\bar{M}^h_z }{d} \Big) \abs{Z_i}^2  d h + \bE_i[\abs{\Delta N_\ip}^2]																					\\
					&\le \Big( 1 + 2\bar{M}^h_y  h \Big)\bE_i[\abs{Y_\ip}^2] + 2\bar{M}^h_t  h + 2 \abs{D_i}^2  d h + \bE_i\Big[ \abs{f^h(t_i,Y_\ip,Z_i)}^2 \Big] h^2.
			\end{align*}			
		Now, using the growth of $f^h$ given by \htgrowth, we obtain that
					\begin{align*}
						\bE_i\left[ \abs{f^h(t_i,Y_\ip,Z_i)}^2 \right] h^2 \le 3 (K^h_t)^2 h^2 + 3 (K^h_y)^2 h^2 \bE_i[\abs{Y_\ip}^2] + 3 (K^h_z)^2 \abs{Z_i}^2 h^2 \ .
		\end{align*}
		This immediately implies the core estimate
			\begin{align}		\label{equation---one.step.size.estimate.stopthere.equation}
				\nonumber
				\abs{Y_i}^2 +& \Big(\half - \frac{2\bar{M}^h_z }{d} \Big) \abs{Z_i}^2  d h + \bE_i[\abs{\Delta N_\ip}^2]																						\\
				\nonumber
					&\le \left( 1 + \big[ 2\bar{M}^h_y  + 3 (K^h_y)^2 h \big] h \right)\bE_i[\abs{Y_\ip}^2]																												\\
									&\hspace{3cm} + \left( 2\bar{M}^h_t + 3 (K^h_t)^2 h\right)  h + 2 \abs{D_i}^2  d h + 3 (K^h_z)^2 \abs{Z_i}^2 h^2 .
			\end{align}
		Now, recall that $(Y_i,Z_i)$ are produced by the scheme \eqref{eq:correctZscheme}-\eqref{eq:correctZscheme} with input $Y_\ip$, and we have therefore $D_i = 0$ here. 
		Also, from the assumptions we have 
			\begin{align*}
				\frac{2\bar{M}^h_z }{d} \le \frac{1}{4} .
				\qquad\text{and}\qquad	
				3 \frac{ \, (K^h_z)^2}{d} h \le \frac{1}{8}	
			\end{align*}
		This allows to pass the term in $\abs{Z_i}^2 h^2$ on the RHS of the inequality to its LHS. 
	\end{proof}

We now state the one-step estimate for \eqref{equation---reference--definition.of.the.scheme}, for which $D_i\neq 0$ a priori. 
We emphasize the additional assumption \htreg, the {absence} of a smallness condition on $h$, and that our handling of the end of the proof is slightly different (even in the case $D_i=0$).

	\begin{proposition}		\label{proposition---one-step.size.estimate} 
		Let the driver $f^h$ satisfy \htgrowth, \htmongrowth with $\bar{M}^h_z \le \frac{d}{8}$ and \htreg. 
		Then there exist $c, C \ge 0$ such that, for any $i\in \{0, \ldots, N-1\}$, and for any random variable $Y_\ip \in L^2(\F_\ip)$, 
		with $(Y_i,Z_i)$ the output of scheme \eqref{equation---reference--definition.of.the.scheme} for the input $Y_\ip$, one has
			\begin{align*}
				\abs{Y_i}^2 + \frac{1}{4} \abs{Z_i}^2  d h + \bE_i[\abs{\Delta N_\ip}^2] 
					\le \big( 1 +  c h \big) \bE_i[\abs{Y_\ip}^2] + C h.
			\end{align*}			
		The constants $c$ and $C$ are uniform in $N$.
	\end{proposition}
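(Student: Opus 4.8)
The plan is to re-use the computation of Lemma \ref{lemma-EstimationofGrowthwithStepRestriction} unchanged up to the core estimate \eqref{equation---one.step.size.estimate.stopthere.equation}. That estimate was obtained from the martingale representation of Lemma \ref{lem:mrt} together with \htmongrowth and \htgrowth, and crucially it was derived \emph{before} setting $D_i = 0$; it therefore remains valid verbatim for the scheme \eqref{equation---reference--definition.of.the.scheme}, carrying the extra term $2\abs{D_i}^2 dh$ on its right-hand side. So the two new tasks are to estimate $D_i = Z_i - \zeta_i$, which no longer vanishes, and to dispose of the terms in $\abs{Z_i}^2 h^2$ without the smallness restriction $h \le h_0$ that Lemma \ref{lemma-EstimationofGrowthwithStepRestriction} used to absorb them into its left-hand side.

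First I would bound $D_i$. Comparing the definition of $Z_i$ in \eqref{equation---reference--definition.of.the.scheme} with $\zeta_i$ given by \eqref{equation---reference--definition.of.zeta} (taken at $\cY_\ip = Y_\ip$, $\cZ_i = Z_i$) gives $D_i = \bE_i[ g_i\, h H_\ip^* ]$ with $g_i := (1-\theta')f^h(t_i,Y_\ip,0) - f^h(t_i,Y_\ip,Z_i)$. A conditional Cauchy--Schwarz inequality together with the normalisation $\bE_i[\abs{H_\ip h}^2] = \Lambda dh$ from \hH yields $\abs{D_i}^2 \le \Lambda d h\, \bE_i[\abs{g_i}^2]$. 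Writing $g_i = \big(f^h(t_i,Y_\ip,0) - f^h(t_i,Y_\ip,Z_i)\big) - \theta' f^h(t_i,Y_\ip,0)$ and bounding the first difference by the Lipschitz-in-$z$ part of \htreg (so $\le L^h_z\abs{Z_i}$) and the last term by \htgrowth (so $\le K^h_t + K^h_y\abs{Y_\ip}$), I get $\bE_i[\abs{g_i}^2] \le C\big((L^h_z)^2\abs{Z_i}^2 + (K^h_t)^2 + (K^h_y)^2\bE_i[\abs{Y_\ip}^2]\big)$. Hence $2\abs{D_i}^2 dh$ only produces contributions of order $h^2\abs{Z_i}^2$, $h^2$, and $h^2\bE_i[\abs{Y_\ip}^2]$.

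The key step, and the point where the argument genuinely differs from Lemma \ref{lemma-EstimationofGrowthwithStepRestriction}, is the handling of all the $\abs{Z_i}^2 h^2$ terms at once: those from the growth term $3(K^h_z)^2\abs{Z_i}^2 h^2$ and the new one just produced by $D_i$. Instead of moving them onto the left (which would need $h$ small), I would control $\abs{Z_i}^2$ by a crude a priori bound read off the definition of $Z_i$ itself. Setting $u_i := Y_\ip + (1-\theta')f^h(t_i,Y_\ip,0)h$, one has $Z_i = \bE_i[u_i H_\ip^*]$, and conditional Cauchy--Schwarz with \hH gives $\abs{Z_i}^2 \le \frac{\Lambda d}{h}\bE_i[\abs{u_i}^2]$. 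The factor $1/h$ is exactly cancelled by the $h^2$: every $\abs{Z_i}^2 h^2$ term becomes $\le \Lambda d h\,\bE_i[\abs{u_i}^2]$, and expanding $\bE_i[\abs{u_i}^2] \le 2\bE_i[\abs{Y_\ip}^2] + Ch^2\big((K^h_t)^2 + (K^h_y)^2\bE_i[\abs{Y_\ip}^2]\big)$ converts each of them into harmless contributions of the form $O(h)\bE_i[\abs{Y_\ip}^2]$ and $O(h)$.

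Finally I would collect terms. No $\abs{Z_i}^2$ mass is moved to the left, so the coefficient there stays $\half - \frac{2\bar{M}^h_z}{d} \ge \frac14$ by the hypothesis $\bar{M}^h_z \le d/8$, producing exactly the $\frac14\abs{Z_i}^2 dh$ of the statement and leaving $\bE_i[\abs{\Delta N_\ip}^2]$ untouched on the left. On the right, every coefficient of $\bE_i[\abs{Y_\ip}^2]$ beyond the leading $1$, and every additive constant, is $O(h)$, provided I invoke the boundedness built into the assumptions: that $\bar{M}^h_y$, $K^h_z$, $L^h_z$, $\bar{M}^h_t$ are bounded and that $(K^h_y)^2 h$ and $(K^h_t)^2 h$ are bounded in $h$. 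This gives $(1+ch)\bE_i[\abs{Y_\ip}^2] + Ch$ with $c,C$ uniform in $N$. The main obstacle is simply resisting the temptation to absorb $\abs{Z_i}^2 h^2$ on the left (which would reintroduce a step-size restriction) and instead recycling the cheap direct bound on $\abs{Z_i}^2$; everything else is bookkeeping with the boundedness conditions on the tamed constants.
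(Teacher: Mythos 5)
Your proposal is correct and follows essentially the same route as the paper's proof: resume from the core estimate of Lemma \ref{lemma-EstimationofGrowthwithStepRestriction} before $D_i$ is set to zero, bound $D_i=Z_i-\zeta_i$ via conditional Cauchy--Schwarz together with \htreg{} and \htgrowth{}, and then neutralise all $\abs{Z_i}^2h^2$ terms by the crude a priori bound $\abs{Z_i}^2\le \frac{\Lambda d}{h}\bE_i[\abs{u_i}^2]$ read off the explicit definition of $Z_i$, rather than absorbing them into the left-hand side. The bookkeeping of the constants and the use of $\bar{M}^h_z\le d/8$ to keep the coefficient $\tfrac14$ on $\abs{Z_i}^2 dh$ also match the paper.
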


	\begin{proof}
		We resume from estimate \eqref{equation---one.step.size.estimate.stopthere.equation} of the previous proof, before passing the $|Z_i|^2 h^2$ to the LHS. 
		We start by estimating $\abs{D_i}^2 d h$ as a function of $|Z_i|^2 h^2$.
		
		Following the definition of $Z_i$ in \eqref{equation---reference--definition.of.the.scheme} and that of $\zeta_i$ in \eqref{equation---reference--definition.of.zeta}, we have
			\begin{align*}
				-D_i = \zeta_i - Z_i 
					&=  \bE_i\Big[ \big( Y_\ip + f^h(t_i,Y_\ip,Z_i)h \big) H_\ip^* \Big] - \bE_i\Big[ \big( Y_\ip + (1-\theta') f^h(t_i,Y_\ip,0)h \big) H_\ip^* \Big]												\\
					&= \bE_i\Big[ \big( f^h(t_i,Y_\ip,Z_i) - (1-\theta') f^h(t_i,Y_\ip,0) \big) h\ H_\ip^* \Big]  .
			\end{align*}
		Using the Cauchy--Schwartz inequality,
			\begin{align*}
				\abs{D_i}^2 d h
					&\le dh  \bE_i[\abs{H_\ip}^2] \times \bE_i\bigg[\Abs{\big( f^h(t_i,Y_\ip,Z_i) - (1-\theta') f^h(t_i,Y_\ip,0) \big) h}^2\bigg]																				\\
					&= \Lambda d^2 \times  \bE_i\bigg[\Abs{ \Big( f^h(t_i,Y_\ip,Z_i) - f^h(t_i,Y_\ip,0) \Big) + \theta' f^h(t_i,Y_\ip,0) }^2\bigg] h^2																		\\
					&\le 2 d^2 (L^h_z)^2 \abs{Z_i}^2 h^2 + 2d^2 \theta'^2 \bE_i[\abs{f^h(t_i,Y_\ip,0)}^2] h^2 \ ,
			\end{align*}
		where we used the $z$-regularity of $f^h$ from \htreg, $(a+b)^2 \le 2a^2 + 2b^2$ and $\Lambda \le 1$.
		
		Injecting this in \eqref{equation---one.step.size.estimate.stopthere.equation}, we obtain
			\begin{align*}
				\abs{Y_i}^2 + \Big(\half &- \frac{2\bar{M}^h_z }{d} \Big) \abs{Z_i}^2  d h + \bE_i[\abs{\Delta N_\ip}^2]																								\\
					& \le \left( 1 + \big[ 2\bar{M}^h_y + 3 (K^h_y)^2 h \big] h \right)\bE_i[\abs{Y_\ip}^2] + \left( 2\bar{M}^h_t + 3 (K^h_t)^2 h\right) h												\\
							& \qquad \qquad + \left( 3 (K^h_z)^2 + 2 d^2 (L^h_z)^2 \right) \abs{Z_i}^2 h^2 + 2d^2 \theta'^2 \bE_i\big[\abs{f^h(t_i,Y_\ip,0)}^2\big] h^2 .
			\end{align*}		
		Using again the growth assumption \htgrowth, we have
			\begin{align*}
				\bE_i\big[\, |f^h(t_i,Y_\ip,0)|^2 \big] h^2 \le 2 (K^h_t)^2 h^2 + 2 (K^h_y)^2 h^2 \bE_i[\abs{Y_\ip}^2].
			\end{align*}
		Hence, the previous estimate implies that
			\begin{align*}
				\abs{Y_i}^2 +& \Big(\half - \frac{2\bar{M}^h_z }{d} \Big) \abs{Z_i}^2  d h + \bE_i[\abs{\Delta N_\ip}^2]																												\\
					&\le \left( 1 + \big[ 2\bar{M}^h_y + 3 (K^h_y)^2 h + 4d^2 \theta'^2 (K^h_y)^2 h \big] h \right)\bE_i[\abs{Y_\ip}^2]																							\\
							&\qquad \qquad + \left( 2\bar{M}^h_t + 3 (K^h_t)^2 h + 4d^2 \theta'^2 (K^h_t)^2 h\right)h + \left( 3 (K^h_z)^2 + 2 d^2 (L^h_z)^2 \right) \abs{Z_i}^2 h^2.
			\end{align*}
								
		At this stage, instead of assuming $h$ small enough and passing the term in $\abs{Z_i}^2 h^2$ from the RHS to the LHS as in the previous proof, 
		we estimate $Z_i$ directly from its explicit definition in \eqref{equation---reference--definition.of.the.scheme}. 
		From the Cauchy--Schwartz inequality, \hH, $\Lambda \le 1$ and \htgrowth,
			\begin{align*}
				\abs{Z_i}^2 h 
					&
					= \big|\bE_i\big[ \big( Y_\ip + (1-\theta') f^h(t_i,Y_\ip,0) h \big) H_\ip ^* \big] \big|^2 h
					\\
					&
					\leq 
					\Lambda d \, \bE_i\big[ \,\big|{ Y_\ip + (1-\theta') f^h(t_i,Y_\ip,0) h }\big|^2 \big]
					\\
					&
					\le 2  d \bE_i\big[ \abs{Y_\ip}^2 \big] + 2 d (1-\theta')^2 \bE_i\big[\, \big|f^h(t_i,Y_\ip,0)\big|^2 \big] h^2
					\\
					& 
					\leq 
					\big( 2 d + 4  d (1-\theta')^2 (K^h_y)^2 h^2 \big)\bE_i[ \abs{Y_\ip}^2] + 4  d (1-\theta')^2 (K^h_t)^2 h^2.
			\end{align*}		
		Injecting this estimate into the previous one and re-arranging the terms we obtain
			\begin{align*}
				\abs{Y_i}^2 + \Big(\half - \frac{2\bar{M}^h_z }{d} \Big) \abs{Z_i}^2 d h + \bE_i[\abs{\Delta N_\ip}^2]
					&\le \big( 1 + c^h  h \big) \bE_i[\abs{Y_\ip}^2] + C^h  h \ ,
			\end{align*}
		where $c^h$ and $C^h$ are given by 
			\begin{align*}
				c^h &:=  2\bar{M}^h_y + 3 (K^h_y)^2 h + 4d^2 \theta'^2 (K^h_y)^2 h 																																	\\
						&\qquad\qquad\qquad + \left( 3 (K^h_z)^2 + 2 d^2 (L^h_z)^2 \right) \left( 2  d + 4  d (1-\theta')^2 (K^h_y)^2 h^2 \right)											\\
				C^h &:=  2\bar{M}^h_t + 3 (K^h_t)^2 h + 4d^2 \theta'^2 (K^h_t)^2 h 																																	\\
						& \qquad\qquad\qquad + \Big( 3 (K^h_z)^2 + 2 d^2 (L^h_z)^2 \Big) 4  d (1-\theta')^2 (K^h_t)^2 h^2  .
			\end{align*}
		Now, first, oberve that since we have assumed $\frac{2\bar{M}^h_z }{d} \le \frac{1}{4}$, the LHS simplifies. 
		Second, from the assumptions on the constants that are made in \htgrowth, \htmongrowth and \htreg, and the fact that $h \le T$, there exist $c$ and $C$ such that $c^h \le c$ and $C^h \le C$.	
	\end{proof}

	\begin{remark}[On the constants $c^h$ and $C^h$]	\label{rem:ontheconstChch}
		The constants $c^h$ and $C^h$ defined in the above proof depend on $h$ in a bounded way. 
		
		Firstly, since $(K^h_t)^2 h$ and $(K^h_y)^2 h$ are bounded, we see that $(K^h_t)^2 h^2$ and $(K^h_y)^2 h^2$ vanish as $h \goesto 0$. 
		So the last term in $C^h$ and most of the last term in $c^h$ vanish. Here, our handling of the estimates allows to conclude without restrictions on $h$, at the price of having bigger constants. 
		As $h \goesto 0$, these bigger constant decrease to, essentially, what they would have been if we had assumed $h$ small 
		and handled the estimates as in lemma \ref{lemma-EstimationofGrowthwithStepRestriction}.
		
		Secondly, we note that if $\theta'=\theta=0$ and $f$ does not depend on $z$ (in which case $K^h_z=L^h_z=0$), then the scheme \eqref{equation---reference--definition.of.the.scheme}
		coincides with the scheme \eqref{eq:correctYscheme}-\eqref{eq:correctZscheme} and the constants $c^h$ and $C^h$ are the same as those found in the previous lemma.
		As will be clearer in the proof of the almost-stability of the scheme (Proposition \ref{proposition---stability.of.the.tamed.scheme.NEWVERSION.FINAL.DEFINITIVE.FOR.REAL}),
		using scheme \eqref{equation---reference--definition.of.the.scheme} (not taking $\theta=0$ and replacing $Z_i$ by zero in the definition of $Z_i$)
		creates some errors that vanish at the same rate as the standard error, and only make for bigger constants.		
	\end{remark}

\subsection{The global almost-sure estimate}

The one-step size estimate of Proposition \ref{proposition---one-step.size.estimate} can be readily iterated to yield an informative almost-sure bound on the size of the $Y_i$'s, where $(Y_i,Z_i)_{i=0, \ldots, N-1}$ is the output from the iteration of scheme \eqref{equation---reference--definition.of.the.scheme} with terminal condition initialized to $\xi^N$.

	\begin{proposition}		\label{prop:aspathestimate}
		Under \htgrowth, \htmongrowth and \htreg, for any $i \in \{0, \ldots, N-1\}$,
			\begin{align*}
				\abs{Y_i}^2 + \bE_i\Bigg[ \frac{1}{4} \sum_{j=i}^{N-1} \abs{Z_j}^2 d h + \sum_{j=i}^{N-1} \abs{\Delta N_\jp}^2 \Bigg] 
					\le e^{c (T-\ti)} \bE_i[\abs{\xi^N}^2] + e^{c (T-\ti)} C (T-\ti).
			\end{align*}
	\end{proposition}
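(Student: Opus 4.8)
The plan is to iterate the one-step almost-sure estimate from Proposition \ref{proposition---one-step.size.estimate} backward from time $\tN$ to time $\ti$, and to convert the resulting geometric factor $(1+ch)^{N-i}$ into the exponential $e^{c(T-\ti)}$. Concretely, I would start from the one-step bound applied with input $Y_\jp$ at each index $j$, namely
	\begin{align*}
		\abs{Y_j}^2 + \frac14 \abs{Z_j}^2 d h + \bE_j[\abs{\Delta N_\jp}^2]
			\le (1+ch)\bE_j[\abs{Y_\jp}^2] + Ch,
	\end{align*}
which holds for every $j \in \{i, \ldots, N-1\}$ since each $Y_\jp$ produced by the scheme lies in $L^2(\F_\jp)$ (this integrability being a consequence of \hxiN and the linear growth \htgrowth propagating backward). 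The initialization is $Y_N = \xi^N$.

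The key step is a clean backward induction. First I would take $\bE_i[\cdot]$ of the one-step inequality at index $j$; using the tower property $\bE_i[\bE_j[\,\cdot\,]] = \bE_i[\,\cdot\,]$ for $j \ge i$, this gives a relation linking $\bE_i[\abs{Y_j}^2]$ to $\bE_i[\abs{Y_\jp}^2]$ together with the contributions $\bE_i[\frac14\abs{Z_j}^2 dh + \abs{\Delta N_\jp}^2]$. I would then sum these telescoping-style contributions. The cleanest route is to prove by descending induction on $i$ the stated inequality directly: assuming it holds at $\ip$, apply the one-step estimate at index $i$ and take $\bE_i$, obtaining
	\begin{align*}
		\abs{Y_i}^2 + \bE_i\Big[\tfrac14 \abs{Z_i}^2 dh + \abs{\Delta N_\ip}^2\Big]
			\le (1+ch)\bE_i[\abs{Y_\ip}^2] + Ch,
	\end{align*}
then bound $\bE_i[\abs{Y_\ip}^2]$ plus the already-accumulated sums from $\ip$ onward using the induction hypothesis (with $\bE_i[\bE_\ip[\cdot]] = \bE_i[\cdot]$). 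This yields a factor $(1+ch)$ multiplying $e^{c(T-\tip)}\bE_i[\abs{\xi^N}^2]$ and an additive term $(1+ch)e^{c(T-\tip)}C(T-\tip) + Ch$.

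The only genuine care needed is the arithmetic that turns these discrete factors into the stated exponential form. For the multiplicative part I would use $(1+ch)e^{c(T-\tip)} \le e^{ch}e^{c(T-\tip)} = e^{c(T-\ti)}$, which handles the $\bE_i[\abs{\xi^N}^2]$ term exactly. For the additive part, one checks that $(1+ch)e^{c(T-\tip)}C(T-\tip) + Ch \le e^{c(T-\ti)}C(T-\ti)$; this follows from $(T-\tip) + h = (T-\ti)$ after absorbing the factors using $1+ch \le e^{ch}$ and monotonicity of $t \mapsto e^{ct}Ct$. I expect this bookkeeping to be the main (though still routine) obstacle, since one must verify the additive remainder closes the induction without a spurious $h$-correction; choosing to induct on the full asserted inequality rather than unwinding $(1+ch)^{N-i}$ at the end keeps this transparent. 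The base case $i=N-1$ is just the one-step estimate itself with $Y_N=\xi^N$ and $T-\tN = 0$ on the right (equivalently $T - t_{N-1} = h$), which the induction step subsumes.
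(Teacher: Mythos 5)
Your proposal is correct and follows essentially the same route as the paper: the paper proves this by directly iterating Proposition \ref{proposition---one-step.size.estimate} via the generic iteration Lemma \ref{lemma---iterating.a.one.step.estimate}, whose proof is exactly the descending induction with $1+ch\le e^{ch}$ that you carry out by hand. The bookkeeping you single out (closing the additive remainder $Ch$ into $e^{c(T-\ti)}C(T-\ti)$) is precisely what that lemma packages, so there is no substantive difference.
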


	\begin{proof}
		This proof follows by directly iterating Proposition \ref{proposition---one-step.size.estimate} ---see Lemma \ref{lemma---iterating.a.one.step.estimate}.	
	\end{proof}

\subsection{Moment estimates}

We now show that $(Y_i,Z_i)_{i=0, \ldots, N}$ has $p$-moments which are bounded uniformly in $N$. 
This is crucial in the next section to show that, while the scheme might not be strictly stable, the stability imperfections are small enough that the scheme is almost-stable.

	\begin{proposition}	\label{proposition---moment.estimate}
		Assume \hxiN, \htgrowth, \htmongrowth and \htreg. For every $p \ge 1$, there exists a constant $C$ (independent on $N$) such that
			\begin{align*}
				\sup_{i=0, \ldots, N-1} \bE\big[\, \abs{Y_i}^{2p}\big] \le C 
				\qquad\text{and}\qquad
				\bE\bigg[ \sum_{i=0}^{N-1} \big(\abs{Z_i}^2h\big)^p \bigg] \le C .
			\end{align*}
	\end{proposition}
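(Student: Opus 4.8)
The plan is to derive the moment bounds by taking $p$-th powers in the almost-sure global estimate of Proposition \ref{prop:aspathestimate} and then applying conditional expectations together with discrete Gronwall-type arguments, carefully leveraging assumption \hxiN to control the terminal data in every $L^p$. First I would address the bound on $\bE[\abs{Y_i}^{2p}]$. Proposition \ref{prop:aspathestimate} already gives, for each $i$, the pointwise (almost-sure) inequality $\abs{Y_i}^2 \le e^{c(T-\ti)}\bE_i[\abs{\xi^N}^2] + e^{c(T-\ti)}C(T-\ti)$. Raising both sides to the power $p$, using $(a+b)^p \le 2^{p-1}(a^p+b^p)$ and the uniform bound $e^{c(T-\ti)} \le e^{cT}$, one gets $\abs{Y_i}^{2p} \le C_p\big( \bE_i[\abs{\xi^N}^2]^p + 1\big)$ with $C_p$ independent of $N$. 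Taking expectations and applying the conditional Jensen inequality $\bE_i[\abs{\xi^N}^2]^p \le \bE_i[\abs{\xi^N}^{2p}]$ followed by the tower property yields
\begin{align*}
	\bE\big[\abs{Y_i}^{2p}\big] \le C_p\big( \bE[\abs{\xi^N}^{2p}] + 1 \big).
\end{align*}
Since \hxiN guarantees $\xi^N \in L^{2p}(\cF_T)$, it remains only to check that $\bE[\abs{\xi^N}^{2p}]$ is bounded \emph{uniformly} in $N$. This is where I would invoke the $L^2$-rate $\text{ERR}_h(\xi) \le c\,h^{1/2}$ together with the assumed $L^p$-integrability: since $\xi \in L^{2p}$ and $\xi^N \to \xi$, one expects $\sup_N \bE[\abs{\xi^N}^{2p}] < \infty$; I would make this explicit (for the standard Euler-based choice $\xi^N = g(X^N_N)$ with $g$ Lipschitz, this follows from the uniform-in-$N$ moment bounds on the Euler scheme $X^N$, as recalled in the background results). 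Taking the supremum over $i$ then gives the first claim.

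For the second bound, on $\bE\big[\sum_{i=0}^{N-1}(\abs{Z_i}^2 h)^p\big]$, the $Z$-part of Proposition \ref{prop:aspathestimate} controls only the linear sum $\sum \abs{Z_j}^2 dh$, not its $p$-th moment, so a direct power is not immediately available when $p>1$. The plan here is to go back to the explicit definition of $Z_i$ in \eqref{equation---reference--definition.of.the.scheme} and estimate it in terms of $Y_\ip$, exactly as done at the end of the proof of Proposition \ref{proposition---one-step.size.estimate}, where the Cauchy--Schwartz inequality, \hH and \htgrowth give the almost-sure one-step bound $\abs{Z_i}^2 h \le \big(2d + 4d(1-\theta')^2 (K^h_y)^2 h^2\big)\bE_i[\abs{Y_\ip}^2] + 4d(1-\theta')^2 (K^h_t)^2 h^2$. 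Raising this to the power $p$, using the uniform-in-$h$ boundedness of $(K^h_y)^2 h$ and $(K^h_t)^2 h$ from \htgrowth, the conditional Jensen inequality $\bE_i[\abs{Y_\ip}^2]^p \le \bE_i[\abs{Y_\ip}^{2p}]$, and then the tower property, I would bound $\bE\big[(\abs{Z_i}^2 h)^p\big] \le C_p'\big(\bE[\abs{Y_\ip}^{2p}] + h^p\big)$, where $C_p'$ is uniform in $N$. Summing over $i$ and inserting the already-established uniform bound $\sup_i \bE[\abs{Y_i}^{2p}] \le C$, together with $\sum_{i=0}^{N-1} h^p = T h^{p-1} \le T\max(1,T)^{p-1}$, produces a bound independent of $N$, as required.

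The main obstacle I anticipate is the uniform-in-$N$ control of $\bE[\abs{\xi^N}^{2p}]$, which is the linchpin of the whole argument: all the scheme-level estimates feed off the terminal data, and without its uniform higher-moment boundedness the constants would degenerate as $N \to \infty$. Assumption \hxiN only states the $L^2$-convergence rate and membership in each $L^p$, so I would need to either strengthen the reading of \hxiN to a genuinely uniform $L^p$-bound (which is how it is meant to be used) or derive the uniform bound from the explicit $\xi^N = g(X^N_N)$ construction via standard uniform moment estimates for the Euler scheme and the Lipschitz (hence linear-growth) property of $g$. A secondary, more routine technical point is keeping track of the $p$-dependent combinatorial constants from the repeated use of $(a+b)^p \le 2^{p-1}(a^p+b^p)$ and conditional Jensen, but these never depend on $N$ and so cause no real difficulty; the structure of the proof is essentially to convert the already-proven second-moment control into $2p$-moment control by one clean application of conditional Jensen at each stage.
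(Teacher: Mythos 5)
Your argument for the $Y$-moment bound is essentially the paper's: raise the global almost-sure estimate of Proposition \ref{prop:aspathestimate} to the power $p$, apply conditional Jensen and the tower property, and invoke the integrability of $\xi^N$ from \hxiN. Your side remark that \hxiN must be read as a \emph{uniform-in-$N$} $L^{2p}$ bound is fair; the paper makes the same implicit reading and justifies it via the Euler construction of $\xi^N$.

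The second half, however, has a genuine gap. The one-step bound you propose to use, $\abs{Z_i}^2 h \le \big(2d + O(h^2)\big)\bE_i[\abs{Y_\ip}^2] + O(h^2)$, makes each summand $\bE\big[(\abs{Z_i}^2 h)^p\big]$ of order $1$ (it is controlled by $\sup_j \bE[\abs{Y_j}^{2p}]$, with no factor of $h$ in front), so summing over $i=0,\ldots,N-1$ yields a bound of order $N$, not a bound independent of $N$. The true size of each term is $O(h^p)$ --- heuristically $\abs{Z_i}$ is $O(1)$, so $(\abs{Z_i}^2h)^p$ is $O(h^p)$ and the sum is $O(Th^{p-1})$ --- but the crude Cauchy--Schwartz estimate loses this because it does not center $Y_\ip$ around $\bE_i[Y_\ip]$; the centered argument of Proposition \ref{lemm:integrabilityWidehatZi} is not available here since the scheme iterates $Y_\ip$ do not come with a path-regularity estimate. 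The paper instead keeps the full one-step inequality $\abs{Y_i}^2 + \frac{d}{4}\abs{Z_i}^2 h \le (1+ch)\bE_i[\abs{Y_\ip}^2] + Ch$ of Proposition \ref{proposition---one-step.size.estimate}, raises \emph{both sides} to the power $p$ so that $\abs{Y_i}^{2p} + (d/4)^p(\abs{Z_i}^2h)^p$ sits on the left, expands the right-hand side binomially as $e^{pch}\bE_i[\abs{Y_\ip}^2]^p + \sum_{k=1}^p\binom{p}{k}(\cdot)^{p-k}(Ch)^k$, and then iterates via Lemma \ref{lemma---iterating.a.one.step.estimate}: the leading term telescopes against $\abs{Y_i}^{2p}$ in Gronwall fashion, and every remaining term carries at least one factor of $h$ (since $k\ge 1$), so the sum over $N$ steps stays of order $T$. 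To repair your proof you would need to reproduce this telescoping structure; the direct term-by-term summation cannot work.
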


The proof of this result is somewhat similar to that in \cite{LionnetReisSzpruch2015}*{Proposition 5.1}. For the convenience of the reader, we place it in Appendix \ref{sec--proof.of.proposition---Prop:Integrability}.

\paragraph*{}
We have analogue results for $(\widehat{Y}_i,\widehat{Z}_i)_{i=0, \ldots, N-1}$ defined in \eqref{eq:SchemeOutputWithTrueInput-Y}-\eqref{eq:SchemeOutputWithTrueInput-Z}.

	\begin{proposition}[Integrability of $\widehat Z_i$]	\label{lemm:integrabilityWidehatZi}
		Let \htgrowth and the assumptions of Section \ref{subsection-AnalyticAssumption} on $f$ hold. 
		Then, for any $p\geq 1$ there exists $C \ge 0$ such that, for any $N \ge 1$,
			\begin{align*}
				\sup_{i=0, \ldots, N-1} \bE\big[\, \abs{\widehat Y_i}^{2p} \big] \le C 
					\qquad\text{and}\qquad
				\sup_{0 \le i \le N-1}\bE\big[ \abs{\widehat{Z}_i}^{2p} \big] \le C.
			\end{align*}
		Consequently, we also have the estimate
				$\bE\big[ \sum_{i=0}^{N-1}  \big( \abs{\widehat{Z}_i}^2 h\big)^{p} \big] \le C$.
	\end{proposition}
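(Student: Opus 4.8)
The proof rests on the uniform-in-time moment bounds for the continuous-time solution. Under the assumptions of Section~\ref{subsection-AnalyticAssumption}, the a priori estimates recalled in Appendix~\ref{appendix-BackgroundResults} provide, for every $p\ge 1$, constants such that $\sup_{u\in[0,T]}\bE[\abs{Y_u}^{2p}]\le C$ and $\sup_{u\in[0,T]}\bE[\abs{Z_u}^{2p}]\le C$; combined with \hfgrowth these also give $\sup_{u\in[0,T]}\bE[\abs{f(u,Y_u,Z_u)}^{2p}]\le C$. The plan is to reduce the two asserted bounds to these, the one genuine difficulty being to control $\widehat{Z}_i$ \emph{without} the spurious factor $1/h$ that a naive Cauchy--Schwarz estimate against $H_\ip$ produces.

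I would treat $\widehat{Z}_i$ first, using its definition \eqref{eq:SchemeOutputWithTrueInput-Z} and the splitting
\[
	\widehat{Z}_i = \bE_i\big[Y_\tip H_\ip^*\big] + (1-\theta')\,h\,\bE_i\big[f^h(t_i,Y_\tip,0)H_\ip^*\big].
\]
The second term is harmless: by conditional Cauchy--Schwarz, point~(2) of \hH (which gives $\bE_i[\abs{H_\ip}^2]=\Lambda d/h$) and \htgrowth together with $\abs{f^h}\le\abs{f}$, its $L^{2p}$-norm is of order $h^{1/2}$ times bounded moments of $Y_\tip$, hence bounded. For the delicate term $\bE_i[Y_\tip H_\ip^*]$ the key is that $\bE_i[H_\ip]=0$, so the $\F_i$-measurable quantity $Y_\ti$ may be subtracted freely:
\[
	\bE_i\big[Y_\tip H_\ip^*\big]=\bE_i\big[(Y_\tip-Y_\ti)H_\ip^*\big].
\]
Cauchy--Schwarz and \hH then give $\abs{\bE_i[Y_\tip H_\ip^*]}^2\le(\Lambda d/h)\,\bE_i[\abs{Y_\tip-Y_\ti}^2]$, and the BSDE dynamics $Y_\tip-Y_\ti=-\int_\ti^\tip f(u,Y_u,Z_u)\,\udu+\int_\ti^\tip Z_u\,\udwu$ together with the conditional It\^o isometry yield
\[
	\bE_i\big[\abs{Y_\tip-Y_\ti}^2\big]\le 2h\,\bE_i\Big[\int_\ti^\tip\abs{f(u,Y_u,Z_u)}^2\udu\Big]+2\,\bE_i\Big[\int_\ti^\tip\abs{Z_u}^2\udu\Big].
\]
This is exactly the cancellation that removes the $1/h$: the conditional variance of $Y$ over one step is $O(h)$, so that $\abs{\bE_i[Y_\tip H_\ip^*]}^2\le C\,\bE_i\big[\tfrac1h\int_\ti^\tip\abs{Z_u}^2\udu\big]+C\,\bE_i\big[\int_\ti^\tip\abs{f(u,Y_u,Z_u)}^2\udu\big]$.

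To finish the $\widehat{Z}_i$ bound I would raise the last inequality to the power $p$, move the power inside the conditional expectation and the time-average by Jensen's inequality, and take total expectations; using $\sup_u\bE[\abs{Z_u}^{2p}]\le C$ and $\sup_u\bE[\abs{f(u,Y_u,Z_u)}^{2p}]\le C$ this gives $\sup_i\bE[\abs{\widehat{Z}_i}^{2p}]\le C$ (the $f$-contribution even carrying a harmless extra $h^p$). The bound on $\widehat{Y}_i$ is then immediate from \eqref{eq:SchemeOutputWithTrueInput-Y}: conditional Jensen gives $\bE[\abs{\widehat{Y}_i}^{2p}]\le C\bE[\abs{Y_\tip}^{2p}]+Ch^{2p}\bE[\abs{f^h(t_i,Y_\tip,\widehat{Z}_i)}^{2p}]$, and the \emph{linear} growth \htgrowth (with $(K^h_t)^2h$, $(K^h_y)^2h$, $K^h_z$ bounded) controls the last factor by $1+\bE[\abs{Y_\tip}^{2p}]+\bE[\abs{\widehat{Z}_i}^{2p}]$, all now finite. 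Finally the stated consequence follows by summation, since
\[
	\bE\Big[\sum_{i=0}^{N-1}\big(\abs{\widehat{Z}_i}^2h\big)^{p}\Big]=h^p\sum_{i=0}^{N-1}\bE\big[\abs{\widehat{Z}_i}^{2p}\big]\le N h^p\,C = T\,h^{p-1}C\le C
\]
for $p\ge 1$. The only subtle point in the whole argument is the variance cancellation exhibited above; everything else is routine Cauchy--Schwarz, Jensen and the uniform moment bounds for the continuous-time solution.
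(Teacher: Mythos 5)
Your overall strategy coincides with the paper's: both exploit $\bE_i[H_\ip]=0$ to replace $Y_\tip$ by a centred quantity before applying Cauchy--Schwarz (you subtract $Y_\ti$, the paper subtracts $\bE_i[Y_\tip]$; these are equivalent), both expand the resulting increment via the BSDE dynamics and the conditional It\^o isometry, and both conclude that the conditional variance of $Y$ over one step is of order $h$ up to the $\int_\ti^\tip\abs{Z_u}^2\,\udu$ term. The treatment of the $f^h$ term, of $\widehat{Y}_i$, and of the summed estimate is also the same.

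There is, however, one genuine gap: you assert that the a priori estimates of Appendix \ref{appendix-BackgroundResults} give $\sup_{u\in[0,T]}\bE[\abs{Z_u}^{2p}]\le C$. They do not. Theorem \ref{theo:BasicExistandUniqTheo} only gives $Z\in\cH^{p}$, i.e.\ $\bE[(\int_0^T\abs{Z_s}^2\uds)^{p/2}]\le C$, and Theorem \ref{theorem---path.regularity} controls moments of the time-averaged $\overline{Z}_\ti$, not of $Z_u$ pointwise in time. Without a pointwise-in-time moment bound, your final step fails: from $\cH^{2p}$ membership alone one only gets $\bE[(\int_\ti^\tip\abs{Z_u}^2\udu)^{p}]\le C$, so the factor $h^{-p}$ coming from $\bE_i[\abs{H_\ip}^2]=\Lambda d/h$ is not cancelled and the bound on $\bE[\abs{\widehat{Z}_i}^{2p}]$ degenerates as $h\to 0$. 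The paper closes exactly this hole by invoking the Markovian representation bound $\abs{Z_t}\le C(1+\abs{X_t})$ $\udt\otimes\ud\bP$-a.s.\ (from \cite{LionnetReisSzpruch2015}, Theorem 3.4, equation (3.20)), which yields $\bE[(\int_\ti^\tip\abs{Z_u}^2\udu)^{p/2}]\le C h^{p/2}$ and hence the cancellation you need. Your argument is repaired by citing that result (or any other source of $\sup_u\bE[\abs{Z_u}^{2p}]\le C$) explicitly; as written, the key analytic input is attributed to estimates that do not supply it.
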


	\begin{proof}
		We start by estimating $\widehat{Z}_i$.
		Using the Cauchy--Schwartz inequality, $1-\theta' \le 1$, the fact that $\bE_i[ \abs{H_\ip}^2 ] = \frac{\Lambda d}{h}$ and $(a+b)^2 \le 2 (a^2 + b^2)$,
			\begin{align*}
				\abs{\widehat{Z}_i}
					& 
					=  
					\Abs{ \bE_i\Big[ \Big( Y_\tip - \bE_i[Y_\tip] + (1-\theta') f^h(t_i,Y_\tip,0)h \Big) H_\ip^* \Big]	}
					\\
					&
					\le 
					2^\half \Big( \bE_i\big[ \Abs{ Y_\tip - \bE_i[Y_\tip] }^2 \big] + \bE_i\big[ \abs{f^h(t_i,Y_\tip,0)}^2 h^2 \big] \Big)^\half	\ \Big( \frac{\Lambda d}{h} \Big)^\half		.
			\end{align*}
		So for all $p \ge 2$, since $(a+b)^q \le 2^{q-1} (a^q+b^q)$,
			\begin{align*}
				\abs{\widehat{Z}_i}^p
					&
					\le 
					2^{p-1} \Big( \bE_i\big[ \Abs{ Y_\tip - \bE_i[Y_\tip] }^2 \big]^{\frac{p}{2}} + \bE_i\big[ \abs{f^h(t_i,Y_\tip,0)}^2 h^2 \big]^{\frac{p}{2}} \Big)
									\ \Big( \frac{\Lambda d}{h} \Big)^{\frac{p}{2}} .
			\end{align*}
		We now take the expectation,
			\begin{align*}
				\bE\big[ \abs{\widehat{Z}_i}^p \big]
					&\le 2^{p-1} \bigg( \bE\Big[ \bE_i\big[ \Abs{ Y_\tip - \bE_i[Y_\tip] }^2 \big]^{\frac{p}{2}} \Big] + \bE\Big[ \bE_i\big[ \abs{f^h(t_i,Y_\tip,0)}^2 h^2 \big]^{\frac{p}{2}} \Big] \bigg)
									\ \Big( \frac{\Lambda d}{h} \Big)^{\frac{p}{2}} .
			\end{align*}

		Now, let us first observe that, using $\abs{f^h}\le f$, \hfgrowth, Jensen's inequality, 
		the tower property of expectations 
		and using Theorem \ref{theorem---path.regularity}, we have
			\begin{align*}
				\bE\big[ \bE_i\big[ \abs{f^h(t_i,Y_\tip,0)}^2 h^2 \big]^{\frac{p}{2}} \big]
					& 
					=  2^{p-1} \Big( (K_t)^{2\frac{p}{2}} + (K_y)^{2\frac{p}{2}} \bE[\abs{Y_\tip}^{pm}] \Big)  h^p
					\le C h^p .
			\end{align*}
		We now address the main difficulty, the term 	$\bE\big[\, \bE_i\big[ \Abs{ Y_\tip - \bE_i[Y_\tip] }^2 \big]^{\frac{p}{2}} \big]$. 
		Using the dynamics of the BSDE over $[\ti,\tip]$ 
		we easily find the identity 
			\begin{align*}
				Y_\tip - \bE_i[Y_\tip] = \bE_i\bigg[ \int_\ti^\tip f(u,Y_u,Z_u) du \bigg] - \int_\ti^\tip f(u,Y_u,Z_u) du + \int_\ti^\tip Z_u dW_u .
			\end{align*} 
		Taking the square, using $\big( \sum_{i=1}^n a_i \big)^2 \le n \sum_{i=1}^n a_i^2$ and Jensen and/or Cauchy--Schwartz inequalities we obtain
			\begin{align*}
					&
				\abs{ Y_\tip - \bE_i[Y_\tip] }^2 
					\\
					&
					\le 
					3 \bE_i\bigg[ h \int_\ti^\tip \abs{f(u,Y_u,Z_u)}^2 du \bigg] + 3 h \int_\ti^\tip \abs{f(u,Y_u,Z_u)}^2 du + 3 \Abs{\int_\ti^\tip Z_u dW_u}^2 .
			\end{align*} 
		We then take the conditional expectation, use the It?´ isometry and \hfgrowth to have
			\begin{align*}
				\bE_i\big[ &\abs{ Y_\tip - \bE_i[Y_\tip] }^2 \big]
					\\
					& 
					\leq  
					6h \bE_i\bigg[ \int_\ti^\tip \abs{f(u,Y_u,Z_u)}^2 du \bigg] + 3d \bE_i\bigg[ \int_\ti^\tip \abs{Z_u}^2 du \bigg]
					\\
					& 
					\leq  18(K_t)^2 h^2 + 18(K_y)^2 h \bE_i\bigg[ \int_\ti^\tip \abs{Y_u}^{2m} du \bigg]
					+ \Big( 18(K_z)^2 h + 3d \Big) \bE_i\bigg[ \int_\ti^\tip \abs{Z_u}^{2} du \bigg] .
			\end{align*} 
		Consequently, taking the power $\frac{p}{2}$ and using$\big( \sum_{i=1}^n a_i \big)^q \le n^{q-1} \sum_{i=1}^n a_i^q$ we have
			\begin{align*}
				\bE_i\big[ \abs{ Y_\tip - \bE_i[Y_\tip] }^2 \big]^{\frac{p}{2}}
					&
					\le 
					C h^p + C h^{\frac{p}{2}} \bE_i\bigg[ \int_\ti^\tip \abs{Y_u}^{2m} du \bigg]^{\frac{p}{2}} + C \bE_i\bigg[ \int_\ti^\tip \abs{Z_u}^{2} du \bigg]^{\frac{p}{2}}		.
			\end{align*} 
		Here the constant $C$ depends on $p$, the growth constants of $f$ and uses $h \le T$.
		We can then take the expectation, and further use the Jensen inequality (repeatedly) to obtain
			\begin{align*}
				\bE\Big[ \bE_i\big[ \abs{ Y_\tip - \bE_i[Y_\tip] }^2 \big]^{\frac{p}{2}} \Big]
					&
					\le 
					C h^p + C \bE\bigg[ \bigg(\int_\ti^\tip \abs{Z_u}^{2} du \bigg)^{\frac{p}{2}} \bigg] .
			\end{align*} 
		Gathering the two estimates, we see that we have in the end
			\begin{align*}
				\bE\big[ \abs{\widehat{Z}_i}^p \big]
					&
					\le 
					C \bigg( C h^p + C \bE\bigg[ \bigg(\int_\ti^\tip \abs{Z_u}^{2} du \bigg)^{\frac{p}{2}} \bigg]	+ C h^p \bigg) \ \Big( \frac{\Lambda d}{h} \Big)^{\frac{p}{2}}
					\\
					&
					\le 
					C h^{\frac{p}{2}} + C h^{-\frac{p}{2}} \bE\bigg[ \bigg(\int_\ti^\tip \abs{Z_u}^{2} du \bigg)^{\frac{p}{2}} \bigg] .
			\end{align*}
		We have now proven the pivotal estimate that will allow us to conclude. 
		Using \cite{LionnetReisSzpruch2015}*{Theorem 3.4, equation (3.20)}, i.e. $|Z_t|\leq C(1+|X_t|)$ $\ud t\otimes \ud \bP$-a.s., 
		to better control the last integral term we obtain, in combination with Theorem \ref{theo:BasicExistandUniqTheo}, that
			\begin{align*}
				\bE\big[ \abs{\widehat{Z}_i}^p \big]
					&
          \le  
					C h^{\frac{p}{2}} 
					+ C h^{-\frac{p}{2}} 
					           \bE\bigg[ \bigg(\int_\ti^\tip \big(1+|X_u|^{2}\big) du \bigg)^{\frac{p}{2}} \bigg]
					\\
					&
          \le  
					C \Big( h^{\frac{p}{2}} 
					            + h^{-\frac{p}{2}} h^{\frac p2}\big(1+\bE[\sup_{0\leq u\leq T}|X_u|^{p}]\big)\Big)
					\leq   
					C \big( h^{\frac{p}{2}} + 1\big)
			\end{align*}
		Since $h \le T$ there exists $C$, independent on $h$, such that
				$\sup_{0 \le i \le N-1} \bE\big[ \abs{{\widehat{Z}}_i}^{p} \big] \le C$.
		
		The second estimate for $(\widehat{Z}_i)_i$ can be obtained either directly from the above pivotal estimate or from the latest estimate: since $p \ge 1$, 
			\begin{align*}
				\bE\bigg[ \sum_{i=0}^{N-1}  \big( \abs{\widehat{Z}_i}^2 h\big)^{p} \bigg] 
				\le 
				\sup_{i=0, \ldots, N-1}  \bE\big[ \abs{\widehat{Z}_i}^{2p} \big] \ T \ h^{p-1} \le C.
			\end{align*}
Following the arguments just used to prove the estimate for $(\widehat{Z}_i)_i$ it is rather straightforward to prove the remaining estimate for $\widehat{Y}_i$ and hence we omit it.
	\end{proof}

\section{Convergence of the schemes}
\label{section---convergence.of.scheme}

In this section we prove that scheme \eqref{equation---reference--definition.of.the.scheme} converges to the solution of the BSDE, as claimed by Theorem \ref{theorem---main--convergence.of.tamed.scheme}. Following the outline of proof described in Section \ref{sec:PC}, we prove that the scheme is almost-stable and estimate the sum of the discretization errors.

\subsection{Stability estimate of the scheme} 

We consider one step of the scheme with inputs $Y_\tip$ and $Y_\ip$, for which the outputs are respectively $(\widehat{Y}_i,\widehat{Z}_i)$ and $(Y_i,Z_i)$ 
(recall \eqref{eq:SchemeOutputWithTrueInput-Y}-\eqref{eq:SchemeOutputWithTrueInput-Z} and \eqref{equation---reference--definition.of.the.scheme}).
We denote by $\delta x$ the difference $\widehat{x} - x$ for a generic quantity $x$, $\widehat{x}$ being the counterpart of $x$ when the input is $Y_\tip$. 
We consequently have
	\begin{align*}
		\left\{\begin{aligned}
			&\delta Y_i = \delta Y_\ip + \big\{ f^h(t_i,Y_\tip,\widehat{Z}_i) - f^h(t_i,Y_\ip,Z_i) \big\} h - \delta \Delta M_\ip															\\
			&\delta Z_i = \bE_i\Big[ \Big( \delta Y_\ip + (1-\theta')\big\{ f^h(t_i,Y_\tip,0) - f^h(t_i,Y_\ip,0) \big\} \Big) H_\ip^* \Big] .
		\end{aligned}\right.
	\end{align*}
From Lemma \ref{lem:mrt} we recall that $\delta \Delta M_\ip$ has the decomposition $\delta \Delta M_\ip = \delta \zeta_i \Lambda^{-1} H_\ip h + \delta \Delta N_\ip$, 
and we have
	\begin{align*}
		\delta \zeta_i = \bE_i\Big[ \Big( \delta Y_\ip + \big\{ f^h(t_i,Y_\tip,\widehat{Z}_i)-f^h(t_i,Y_\ip,Z_i) \big\} h \Big) H_\ip^* \Big] .
	\end{align*}

	\begin{lemma}		\label{proposition---stability.of.the.tamed.scheme.NEWVERSION}
		Under \htmon, \htreg and \htregY, there exist $c, C \ge 0$ such that, without any restriction on the time-step (aside from $0<h \le T$), we have
			\begin{align*}
				\abs{\delta Y_i}^2 + \frac{1}{4} \abs{\delta Z_i}^2  d h + \bE_i[\abs{\delta \Delta N_\ip}^2] 
					\le (1+c \, h) \bE_i[\abs{\delta Y_\ip}^2] + C \cI_i
			\end{align*}
		where 
			\begin{align*}
				\cI_i &= \bE_i\Big[ \Rmon(\ti,Y_\tip,Y_\ip,\widehat{Z}_i) \Big] h 																																	\\
								& \qquad + \bE_i\Big[ \RregY(\ti,Y_\tip,Y_\ip,\widehat{Z}_i)^2 \Big] h^2 + \bE_i\Big[ \RregY(\ti,Y_\tip,Y_\ip,0)^2 \Big] h^2 						\\
					&= A_i h + B_i h^2 + B^0_i h^2 .
			\end{align*}		
	\end{lemma}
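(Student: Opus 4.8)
The plan is to run the one-step computation of Proposition~\ref{proposition---one-step.size.estimate} \emph{for the differences} $\delta$, replacing the growth assumptions by their monotonicity/regularity counterparts: \htmongrowth is replaced by \htmon (producing the remainder $\Rmon$, hence the term $\Ai h$), \htgrowth is replaced by \htregY (producing $\RregY$, hence the terms $\Bi h^2$ and $\Biz h^2$), while \htreg is used exactly as before to control the $z$-differences. First I would start from the martingale identity of Lemma~\ref{lem:mrt} written for $\delta Y_i$, namely $\delta Y_i + \delta \Delta M_\ip = \delta Y_\ip + \Delta f\, h$ with $\Delta f := f^h(\ti,Y_\tip,\widehat Z_i) - f^h(\ti,Y_\ip,Z_i)$, square it and take $\bE_i$. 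Since $\bE_i[\delta \Delta M_\ip]=0$ and $\delta Y_i$ is $\cF_i$-measurable, the cross term drops and one obtains
\begin{align*}
\abs{\delta Y_i}^2 + \bE_i[\abs{\delta \Delta M_\ip}^2] = \bE_i[\abs{\delta Y_\ip}^2] + 2\bE_i[\scalar{\delta Y_\ip}{\Delta f}]h + \bE_i[\abs{\Delta f}^2]h^2 .
\end{align*}

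Next I would treat the cross term by splitting $\Delta f$ into a $y$-difference and a $z$-difference at frozen arguments. The $y$-difference $f^h(\ti,Y_\tip,\widehat Z_i)-f^h(\ti,Y_\ip,\widehat Z_i)$ is handled by \htmon, giving $M^h_y\abs{\delta Y_\ip}^2 + \Rmon(\ti,Y_\tip,Y_\ip,\widehat Z_i)$; the $z$-difference $f^h(\ti,Y_\ip,\widehat Z_i)-f^h(\ti,Y_\ip,Z_i)$ is bounded by $L^h_z\abs{\delta Z_i}$ via \htreg and then by a Young inequality with a \emph{large} weight, so that its contribution to $\abs{\delta Z_i}^2 h$ is as small as desired (the compensating large weight only enlarging the coefficient of $\abs{\delta Y_\ip}^2 h$, i.e. $c$). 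For the martingale term I would use the orthogonal decomposition $\delta \Delta M_\ip=\delta\zeta_i\Lambda^{-1}H_\ip h+\delta \Delta N_\ip$, the identity $\bE_i[\abs{\delta \Delta M_\ip}^2]=\abs{\delta\zeta_i}^2\Lambda^{-1}dh+\bE_i[\abs{\delta \Delta N_\ip}^2]$, and $\delta\zeta_i=\delta Z_i-\delta D_i$ together with $\abs{\delta\zeta_i}^2\ge\tfrac12\abs{\delta Z_i}^2-\abs{\delta D_i}^2$, exactly as in Proposition~\ref{proposition---one-step.size.estimate}; using $\Lambda^{-1}\ge 1$ this brings $\tfrac12\abs{\delta Z_i}^2 dh$ to the left and a penalty $\abs{\delta D_i}^2\Lambda^{-1}dh$ to the right.

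The remaining two quadratic terms are then estimated as in the size lemma. For $\bE_i[\abs{\Delta f}^2]h^2$, the $y$-part is controlled by \htregY, producing $C(L^h_y)^2h^2\abs{\delta Y_\ip}^2$ (which vanishes as $h\to0$ since $(L^h_y)^2h$ is bounded) together with $\Bi h^2=\bE_i[\RregY(\ti,Y_\tip,Y_\ip,\widehat Z_i)^2]h^2$, while the $z$-part gives a $\abs{\delta Z_i}^2h^2$ term. The penalty $\abs{\delta D_i}^2dh$ is bounded by Cauchy--Schwarz from the explicit expression $\delta D_i=\bE_i[((1-\theta')\{f^h(\ti,Y_\tip,0)-f^h(\ti,Y_\ip,0)\}-\Delta f)h\,H_\ip^*]$, which using \htreg and \htregY again yields $\abs{\delta Z_i}^2h^2$ terms plus the remainders $\Bi h^2$ and $\Biz h^2=\bE_i[\RregY(\ti,Y_\tip,Y_\ip,0)^2]h^2$ (the argument $0$ arising precisely from the $f^h(\cdot,\cdot,0)$ pieces). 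To dispose of all residual $\abs{\delta Z_i}^2h^2$ terms \emph{without any smallness condition on $h$}, I would, as in Proposition~\ref{proposition---one-step.size.estimate}, not absorb them but substitute the explicit bound $\abs{\delta Z_i}^2h\le \Lambda d\,\bE_i[\abs{\delta Y_\ip+(1-\theta')\{f^h(\ti,Y_\tip,0)-f^h(\ti,Y_\ip,0)\}h}^2]$ coming from the very definition of $\delta Z_i$; via \htregY this turns each $\abs{\delta Z_i}^2h^2$ into $O(h)\bE_i[\abs{\delta Y_\ip}^2]$ plus a further $\Biz h^2$ contribution.

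The main obstacle, exactly as in the size estimate, is the careful bookkeeping of the coefficient of $\abs{\delta Z_i}^2$: one must verify that after subtracting the (small) cross-term contribution and converting away the $\abs{\delta Z_i}^2h^2$ terms, the net left-hand coefficient stays $\ge\tfrac14 dh$ uniformly in $h\in(0,T]$ and in $N$, and that the three remainders emerge with the correct arguments $\widehat Z_i$ and $0$. Collecting everything, the $\abs{\delta Y_\ip}^2$ coefficients assemble into $1+ch$ (the $2M^h_yh$ from \htmon being bounded, together with the $O(h)$ contributions just described), and the remainders assemble into $C\cI_i=C(\Ai h+\Bi h^2+\Biz h^2)$, which is the claim.
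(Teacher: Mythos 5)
Your proposal is correct and follows essentially the same route as the paper's proof: square the martingale identity for the differences, add-and-subtract $f^h(\ti,Y_\ip,\widehat Z_i)$ to apply \htmon (giving $A_i h$) and \htreg with a Young weight, use the orthogonal decomposition of $\delta\Delta M_\ip$ with the penalty $\abs{\delta D_i}^2$, control the quadratic terms via \htregY and \htreg (giving $B_i h^2$ and $B^0_i h^2$), and eliminate the residual $\abs{\delta Z_i}^2 h^2$ terms by substituting the explicit Cauchy--Schwarz bound on $\delta Z_i$ rather than imposing a smallness condition on $h$. The paper's specific choice of Young weight is $\alpha=2(L^h_z)^2/d$, which pins the left-hand $Z$-coefficient at exactly $\tfrac14 d h$, matching your bookkeeping requirement.
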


	\begin{proof}
		The proof presented here works with the extra assumption that $L^h_z>0$, while \htreg assumes $L^h_z\geq 0$. 
		The case of $L^h_z=0$ is easy to derive and does not require the constant $\alpha$ below. 
		Overall, the estimations here are very similar to those for the one-step size estimate (see Proposition \ref{proposition---one-step.size.estimate}). 
		Squaring and taking conditional expectation, we have
			\begin{align*}
				\abs{\delta Y_i}^2 + \bE_i\big[\abs{\delta \Delta M_\ip}^2\big] 
					&= \bE_i\Big[ \abs{\delta Y_\ip}^2 + 2 \scalar{ \delta Y_\ip } { \big\{ f^h(t_i,Y_\tip,\widehat{Z}_i) - f^h(t_i,Y_\ip,Z_i) \big\}h }											\\
					&\hspace{5cm}	+ \abs{ f^h(t_i,Y_\tip,\widehat{Z}_i) - f^h(t_i,Y_\ip,Z_i) }^2 h^2 \Big] . 
			\end{align*}
		As usual, we add-and-subtract $f^h(t_i,Y_\ip,\widehat{Z}_i)$ and then make use of \htmon and \htreg to estimate the 2nd term on the RHS and obtain 
			\begin{align*}
				\abs{\delta Y_i}^2 + \bE_i\big[\abs{\delta \Delta M_\ip}^2\big]
					&\le  \Big( 1 + 2(M^h_y + \alpha) h \Big) \bE_i[\abs{\delta Y_\ip}^2] + \Ai h 																											\\
					&\qquad	+ \frac{(L^h_z)^2}{2\alpha} \abs{\delta Z_i}^2 h + \bE_i\Big[\abs{ f^h(t_i,Y_\tip,\widehat{Z}_i) - f^h(t_i,Y_\ip,Z_i) }^2 \Big] h^2 .
			\end{align*}			
		Due to the orthogonality of $H_\ip h$ and $\delta \Delta N_\ip$ and using a Young inequality, we have
			\begin{align*}
				\bE_i\big[ \abs{\delta \Delta M_\ip}^2 \big] 
					&= \abs{\delta \zeta_i}^2 \Lambda^{-2} \bE_i[\abs{H_\ip h}^2] + \bE_i[\abs{\delta \Delta N_\ip}^2]																		\\
					&= \abs{\delta Z_i - \delta D_i}^2 \Lambda^{-1} d h + \bE_i[\abs{\delta \Delta N_\ip}^2]																						\\
					&\ge \half \abs{\delta Z_i}^2 \Lambda^{-1} d h - \abs{\delta D_i}^2 \Lambda^{-1} d h + \bE_i[\abs{\delta \Delta N_\ip}^2] .
			\end{align*}		
		So, the inequality for $\abs{\delta Y_i}^2 + \bE_i\big[\abs{\delta \Delta M_\ip}^2\big]$ becomes, since $1 \le \Lambda^{-1}$,
			\begin{align*} 
				\abs{\delta Y_i}^2 &+ \Big( \half - \frac{(L^h_z)^2}{2\alpha d} \Big) \abs{\delta Z_i}^2 d h + \bE_i[\abs{\delta \Delta N_\ip}^2]
				\\
					&
					\le \Big( 1 + 2(M^h_y + \alpha) h \Big) \bE_i[\abs{\delta Y_\ip}^2] + \Ai h 
					\\
					& \hspace{4cm}+ \abs{\delta D_i}^2 \Lambda^{-1} d h												
					+ \bE_i\Big[\abs{ f^h(t_i,Y_\tip,\widehat{Z}_i) - f^h(t_i,Y_\ip,Z_i) }^2 \Big] h^2 .
			\end{align*}
		
		We now focus on $\delta D_i$.
			\begin{align*}
				-\delta D_i &=\delta \zeta_i - \delta Z_i 																																										\\
					&= \bE_i\Big[ \Big( \{ f^h(t_i,Y_\tip,\widehat{Z}_i) - f^h(t_i,Y_\ip,Z_i) \} - (1-\theta')\{ f^h(t_i,Y_\tip,0) - f^h(t_i,Y_\ip,0) \} \Big) h H_\ip^* \Big] .
			\end{align*}
		Hence, using the Cauchy--Schwartz inequality, $\bE_i[ \abs{H_\ip}^2 ] = \Lambda d h^{-1}$ and $\Lambda \le 1$, we have
			\begin{align*}
			&
				\abs{\delta D_i}^2 \Lambda^{-1} d h
				\\
				&\quad 
					\le \Lambda^{-1} d h \bE_i\big[ \abs{H_\ip}^2 \big]
					\\
			&\qquad\quad \times \bE_i\Big[ \Abs{\big( f^h(t_i,Y_\tip,\widehat{Z}_i) - f^h(t_i,Y_\ip,Z_i) \big) - (1-\theta') \big( f^h(t_i,Y_\tip,0) - f^h(t_i,Y_\ip,0) \big) }^2 \Big] h^2
			\\
					&\quad \le 2d^2  \bE_i\Big[ \abs{ f^h(t_i,Y_\tip,\widehat{Z}_i) - f^h(t_i,Y_\ip,Z_i) }^2 \Big] h^2
					\\
					&\hspace{4.4cm} + 2d^2 (1-\theta')^2 \bE_i\Big[ \abs{  f^h(t_i,Y_\tip,0) - f^h(t_i,Y_\ip,0) }^2 \Big] h^2 .
			\end{align*}	
		Reinjecting this in the previous estimate leads to 
			\begin{align*}
				\abs{\delta Y_i}^2 
				&+ \Big( \half - \frac{(L^h_z)^2}{2\alpha d} \Big) \abs{\delta Z_i}^2 d h + \bE_i[\abs{\delta \Delta N_\ip}^2]
				\\
					&\le \Big( 1 + 2(M^h_y + \alpha) h \Big) \bE_i[\abs{\delta Y_\ip}^2] + \Ai h
					\\
							&\hspace{2.0cm} + (1+2d^2) \bE_i\Big[ \abs{ f^h(t_i,Y_\tip,\widehat{Z}_i) - f^h(t_i,Y_\ip,Z_i) }^2 \Big] h^2
							\\
							&\hspace{3.5cm} +  2 d^2 (1-\theta')^2 \bE_i\Big[ \abs{ f^h(t_i,Y_\tip,0) - f^h(t_i,Y_\ip,0) }^2 \Big] h^2 .
			\end{align*} 
		
		We now use \htregY and \htreg to estimate
			\begin{align*} 
				\abs{ f^h(t_i,Y_\tip,\widehat{Z}_i) &- f^h(t_i,Y_\ip,Z_i) }^2  																																													\\
					&\le 4 (L^h_y)^2 \abs{\delta Y_\ip}^2  + 4 \RregY(\ti,Y_\tip,Y_\ip,\widehat{Z}_i)^2 + 2 (L^h_z)^2 \abs{\delta Z_i}^2  																			\\
				\abs{f^h(t_i,Y_\tip,0) & - f^h(t_i,Y_\ip,0) }^2 																																																		\\
					&\le 2(L^h_y)^2 \abs{\delta Y_\ip}^2 + 2 \RregY(\ti,Y_\tip,Y_\ip,0)^2 .
			\end{align*}
		This leads to 
			\begin{align*}
				\abs{\delta Y_i}^2  &+ \Big( \half - \frac{(L^h_z)^2}{2\alpha d} \Big) \abs{\delta Z_i}^2 d h + \bE_i[\abs{\delta \Delta N_\ip}^2]																\\
					&\le \bigg( 1 + \Big[ 2(M^h_y + \alpha) + 4 (1+2d^2) (L^h_y)^2 h + 4 d^2 (1-\theta')^2 (L^h_y)^2 h \Big] h \bigg) \bE_i[\,\abs{\delta Y_\ip}^2] 								\\
						&\qquad	+ \Ai h  + 4(1+2d^2)\Bi h^2 + 4d^2(1-\theta')^2\Biz h^2 + 2 (1+2d^2) (L^h_z)^2 \bE_i\big[\, \abs{\delta Z_i}^2 h \big] h .
			\end{align*}		
		All that remains is to estimate the term with $\abs{\delta Z_i}^2 h$ on the RHS. We have, since $\Lambda \le 1$,
			\begin{align*}
				\abs{\delta Z_i}^2 h 
					&= \bE_i\Big[ \Big( \delta Y_\ip + (1-\theta')\big\{ f^h(t_i,Y_\tip,0) - f^h(t_i,Y_\ip,0) \big\} h \Big) 	 H_\ip^* \Big]^2 h																				\\	
					&\le h^{-1} \bE_i\Big[ \abs{H_\ip h}^2 \Big] \bE_i\Big[ \Abs{ \delta Y_\ip + (1-\theta')\big\{ f^h(t_i,Y_\tip,0) - f^h(t_i,Y_\ip,0) \big\} h }^2 \Big]											\\
					&\le \Lambda d \times 2\ \bE_i\Big[ \abs{\delta Y_\ip}^2 + (1-\theta')^2 \abs{ f^h(t_i,Y_\tip,0) - f^h(t_i,Y_\ip,0) }^2 h^2 \Big]																	\\
					&\le  2 d \Big( 1 +  2 (1-\theta')^2 (L^h_y)^2 h^2 \Big) \bE_i[\delta Y_\ip^2] + 4 d (1-\theta')^2  \Biz h^{2} .
			\end{align*}
		Define the quantities
			\begin{align*}
			\nonumber 
				c^h 
				&
				:= 2(M^h_y + \alpha)  + 4 (1+2d^2) (L^h_y)^2 h + 4 d^2 (1-\theta')^2 (L^h_y)^2 h
				\\
				\nonumber
				& \hspace{4cm} 
				    + 4 (1+2d^2) (L^h_z)^2  d \Big( 1 +  2(1-\theta')^2 (L^h_y)^2 h^2 \Big)
			 \\
				\text{and}\quad  
				C^h & := 1 + 4(1+2d^2) + 4d(1-\theta')^2 \big[ d+ 2 (1+2d^2) (L^h_z)^2 h\big] .
			\end{align*}
		Then, by plugging in the estimates for $\abs{\delta Z_i}^2 h$ and re-organizing the terms we have 
			\begin{align*}
				\abs{\delta Y_i}^2 + \Big( \half - \frac{(L^h_z)^2}{2\alpha d} \Big) \abs{\delta Z_i}^2 d h &+ \bE_i[\abs{\delta \Delta N_\ip}^2]																\\
					&\leq (1+c^h \, h) \bE_i[\abs{\delta Y_\ip}^2] + C^h \Big( \Ai h + \Bi h^2 + \Biz h^2\Big) .
			\end{align*}		
		To complete the proof, first, we choose $\alpha = \frac{2 (L^h_z)^2}{d}$.
		Second, we note that with this choice of $\alpha$, and the assumptions on the constants made in \htregY, \htreg and \htmon, there exist $c, C \ge 0$ such that $c^h \le c$ and $C^h \le C$
		for all $h>0$ (provided $h \le T$).
	\end{proof}

Remark \ref{rem:ontheconstChch} applies to the constants $c^h$ and $C^h$.  
We can now prove that the scheme is almost-stable. We recall that $\mu$ is defined in subsection \ref{sec:PC}.

	\begin{proposition}		\label{proposition---stability.of.the.tamed.scheme.NEWVERSION.FINAL.DEFINITIVE.FOR.REAL}
		Under \htmon, \htreg and \htregY, there exist $c, C \ge 0$ such that for all $N \ge 1$ we have
			\begin{align*}
				\bE\big[ \abs{\delta Y_i}^2 \big] + \frac{1}{4} \bE\big[ \abs{\delta Z_i}^2  d h \big]
						\le (1+c \, h) \bE\big[\,\abs{\delta Y_\ip}^2\big] + C h^{\mu+1} .
			\end{align*}
	\end{proposition}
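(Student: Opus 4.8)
The plan is to deduce this unconditional estimate from the conditional one-step bound of Lemma~\ref{proposition---stability.of.the.tamed.scheme.NEWVERSION}. First I would take the expectation $\bE[\cdot]$ of the inequality stated there. The term $\bE_i[\abs{\delta \Delta N_\ip}^2]$ on the left-hand side is nonnegative and can simply be discarded, while by the tower property $\bE\big[\bE_i[\abs{\delta Y_\ip}^2]\big] = \bE[\abs{\delta Y_\ip}^2]$. This reduces the claim to showing that the remainder satisfies $\bE[\cI_i] \le C h^{\mu+1}$, with $C$ uniform in $i$ and $N$. Since $\cI_i = \Ai h + \Bi h^2 + \Biz h^2$, and recalling $\Ai = \bE_i[\Rmon(\ti,Y_\tip,Y_\ip,\widehat{Z}_i)]$, $\Bi = \bE_i[\RregY(\ti,Y_\tip,Y_\ip,\widehat{Z}_i)^2]$ and $\Biz = \bE_i[\RregY(\ti,Y_\tip,Y_\ip,0)^2]$, a further use of the tower property shows it suffices to prove $\bE[\Rmon(\ti,Y_\tip,Y_\ip,\widehat{Z}_i)] \le C h^{\mu}$ and $\bE[\RregY(\ti,Y_\tip,Y_\ip,\cdot)^2] \le C h^{\mu-1}$; the extra power of $h$ multiplying $\Bi$ and $\Biz$, together with $h \le T$, then provides the needed margin.

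The guiding observation is that the arguments of the remainders mix the true solution value $Y_\tip$, which belongs to every $\cS^p$ by the a priori BSDE estimates recalled in the appendix, with the scheme quantities $Y_\ip$ and $\widehat{Z}_i$, whose $2p$-moments are bounded uniformly in $N$ by Propositions~\ref{proposition---moment.estimate} and~\ref{lemm:integrabilityWidehatZi}. Hence the expectation of any fixed polynomial in $(\abs{Y_\tip},\abs{Y_\ip},\abs{\widehat{Z}_i})$ is bounded by a constant independent of $N$. It then remains to extract the advertised power of $h$ by treating the three cases of \htfcvg in turn. In the first case the bound is immediate: $\Rmon$ and $\RregY$ carry an explicit factor $h^\alpha$, the polynomial growth terms integrate to a constant, and one reads off $\mu = \alpha$. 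In the second and third cases the remainders instead carry an indicator of an excursion set, $\1_{\{\abs{f}>r(h)\}}$ or $\1_{\{\abs{y}>r(h)\}}$ with $r(h) = r_0 h^{-\beta}$ (respectively $r_0 h^{-\gamma}$); here I would peel off the polynomial prefactor by Cauchy--Schwartz and convert the indicator into a power of $h$ through a Markov/Chebyshev inequality of order $l$, using \hfgrowth{} to bound $\bE[\abs{f(\ti,\cdot,\cdot)}^l]$ by the uniformly bounded moments above. The excursion probability is then $O(r(h)^{-l}) = O(h^{\beta l})$ (respectively $O(h^{\gamma l})$), and taking its square root yields precisely $\mu = \beta l/2$ (respectively $\gamma l/2$), matching the definition of $\mu$ given in Subsection~\ref{sec:PC}.

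Collecting the three cases gives $\bE[\cI_i] \le C h^{\mu+1}$ uniformly in $i$ and $N$, which combined with the first paragraph yields the proposition. I expect the main obstacle to be the book-keeping in the indicator cases: one must fix the Markov exponent $l$ (large enough that $\mu \ge 1$), separate the polynomial prefactor from the indicator by H\"older or Cauchy--Schwartz at sufficiently high integrability, and invoke the correct uniform moment bound for each mixed argument $Y_\tip$, $Y_\ip$, $\widehat{Z}_i$ --- being careful that it is $\widehat{Z}_i$, controlled by Proposition~\ref{lemm:integrabilityWidehatZi}, and not $Z_i$ that appears here. Everything else reduces to the triangle and Young inequalities already exploited in the proof of Lemma~\ref{proposition---stability.of.the.tamed.scheme.NEWVERSION}.
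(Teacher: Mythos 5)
Your proposal is correct and follows essentially the same route as the paper: take expectations in Lemma \ref{proposition---stability.of.the.tamed.scheme.NEWVERSION}, drop the nonnegative $\bE_i[\abs{\delta \Delta N_\ip}^2]$ term, and reduce the claim to $\bE[\cI_i]\le C h^{\mu+1}$, which is then checked case by case under \htfcvg{} using the uniform moment bounds for $Y_\tip$, $Y_\ip$ and $\widehat{Z}_i$, with Cauchy--Schwartz plus a Markov inequality of order $l$ handling the indicator cases. The exponents you extract ($\mu=\alpha$, $\beta l/2$, $\gamma l/2$) match the paper's bookkeeping exactly.
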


	\begin{proof}
		The first and easy step is to take expectation in the estimate from Lemma \ref{proposition---stability.of.the.tamed.scheme.NEWVERSION}. 
		One must then estimate the imperfection terms in the $\bE[\cI_i]$.
		We treat each case separately and we recall that $C$ is a constant whose value can change from line to line.

		\paragraph*{}		
		\textit{Case 1.}
			We assume for convenience that $\RregY$ and $\Rmon$ satisfy \htfcvg.1 with the same constants.
			In practice, for the multiplicative tamings (see Appendix \ref{appendix:VerifyingExamples}), we find $q=2m$ and $p=2$ for $\RregY$, and $q=4m$ and $p=2$ for $\Rmon$.
			But $q$ does not affect the rates. Alternatively we could argue that $\abs{y}^q \le 1 + \abs{y}^{2q}$ so both cases fit with $q=4m$.
			Let us start with $\RregY$.
				\begin{align*}
					B_i h^2 
					&
					= 
					\bE\Big[ \abs{ \RregY  (\ti,Y_\tip,Y_\ip,\widehat{Z}_i)  }^2 \Big]  h^2
					\\
						&
						\le 
						C \bE \Big[ \big( 1 + \abs{Y_\tip}^q + \abs{Y_\ip}^q + \abs{\widehat{Z}_i}^p \big)^2 \Big] h^{2\alpha} \ h^2
						\\
						&
						\le 
						C \Big( 1 + \bE[\abs{Y_\tip}^{2q}] + \bE[\abs{Y_\ip}^{2q}]  + \bE[\abs{\widehat{Z}_i}^{2p}] \Big) h^{2\alpha+2}
						\le C h^{2\alpha+2} . 
				\end{align*}
			Here, we have used moment bounds from Theorem \ref{theorem---path.regularity} and Propositions \ref{proposition---moment.estimate} and \ref{lemm:integrabilityWidehatZi}.
			The case of $B^0_i h^2$ is similar.
			Define consequently $\mu_1 = -1+2\alpha+2 = 2\alpha+1$.
			
			Let us now handle $\Rmon$.
				\begin{align*}
					A_i h = \bE\Big[ { \Rmon (\ti,Y_\tip,Y_\ip,\widehat{Z}_i) } \Big] h		 																																							
						&\le C \bE \Big[ 1 + \abs{Y_\tip}^q + \abs{Y_\ip}^q + \abs{\widehat{Z}_i}^p \Big] h^{\alpha} \ h																										\\
						& =  C \Big( 1 + \bE[\abs{Y_\tip}^q] + \bE[\abs{Y_\ip}^q]  + \bE[\abs{\widehat{Z}_i}^{p}] \Big) h^{\alpha+1}
						\\
						&\le C h^{\alpha+1}.
				\end{align*}
			Define consequently $\mu_2 = -1+\alpha+1=\alpha$. The proof finishes by taking  $\mu = \min( \mu_1, \mu_2 ) = \alpha$.

		\paragraph*{}
		\textit{Case 2.}
			We assume again that $\RregY$ and $\Rmon$ satisfy \htfcvg.2 with the same constants.
			In practice, for the outer taming by projection, we find $q=2m$ and $p=0$ for $\RregY$, and $q=2m$ and $p=2$ for $\Rmon$. 
			Let us start with $\RregY$. 
			Using $\1_{A \cup B} \le \1_A + \1_B$ and the Cauchy-Schwartz inequality,
				\begin{align*}
					\bE\Big[ & \abs{ \RregY  (\ti,Y_\tip,Y_\ip,\widehat{Z}_i) }^2 \Big]  h^2
					\\
						&\le 
						C \bE \Big[ \big( 1 + \abs{Y_\tip}^{q} + \abs{Y_\ip}^{q} + \abs{\widehat{Z}_i}^{p} \big)^2 
										\big( \1_{\{ \abs{f(\ti,Y_\tip,\widehat{Z}_i)} > r(h) \}} + \1_{\{ \abs{f(\ti,Y_i,\widehat{Z}_i)} > r(h) \}} \big)^2 \Big] \ h^2
										\\
						&
						\le 
						C \bE \Big[ \big( 1 + \abs{Y_\tip}^{2q} + \abs{Y_\ip}^{2q} + \abs{\widehat{Z}_i}^{2p} \big) 
										\big( \1_{\{ \abs{f(\ti,Y_\tip,\widehat{Z}_i)} > r(h) \}} + \1_{\{ \abs{f(\ti,Y_\ip,\widehat{Z}_i)} > r(h) \}} \big) \Big] \ h^2
										\\
						&
						\le 
						C \bE \Big[ 1 + \abs{Y_\tip}^{4q} + \abs{Y_\ip}^{4q} + \abs{\widehat{Z}_i}^{4p} \Big]^\half	
						\Big( \bE\big[ \1_{\{ \abs{f(\ti,Y_\tip,\widehat{Z}_i)} > r(h) \}} \big] 
										+ \bE\big[ \1_{\{ \abs{f(\ti,Y_\ip,\widehat{Z}_i)} > r(h) \}} \big]\Big)^\half \ h^2	.	
				\end{align*}
			Now, we systematically use 
			the moment bounds from Theorem \ref{theorem---path.regularity} and Propositions \ref{proposition---moment.estimate} and \ref{lemm:integrabilityWidehatZi}, 
			as well as the Markov inequality with a power $l \ge 1$ yet to be determined, 
			to have
				\begin{align*}
					\bE\Big[ \abs{ \RregY & (\ti,Y_\tip,Y_\ip,\widehat{Z}_i) }^2 \Big]  h^2																																										\\
						&\le C \Big( \bE\big[ \abs{f(\ti,Y_\tip,\widehat{Z}_i)}^l \big] + \bE\big[ \abs{f(\ti,Y_\ip,\widehat{Z}_i)}^l \big]\Big)^\half r(h)^{-\frac{l}{2}} \ h^2														\\
						&\le C \Big( 1 + \bE[\abs{Y_\tip}^{lm}] + \bE[\abs{Y_\ip}^{lm}] + \bE[\abs{\widehat{Z}_i}^{l}] \big]\Big)^\half h^{\frac{\beta l}{2}} \ h^2
						\le C h^{\frac{\beta l}{2} +2 } 	.
				\end{align*}
			Define $\mu_1 = -1+\frac{\beta l}{2} + 2 = \frac{\beta l}{2} + 1$.
			
			Let us now handle $\Rmon$.
			We use the Cauchy-Schwartz inequality, the inequality $(\sum_{i=1}^n a_i)^k \le n^{k-1} \sum_{i=1}^n a_i^k$, 
			the Markov inequality with a power $l \ge 1$ yet to be determined, 
			and the moment bounds from Theorem \ref{theorem---path.regularity} and Propositions \ref{proposition---moment.estimate} and \ref{lemm:integrabilityWidehatZi}.
				\begin{align*}
					\bE\Big[ \abs{ \Rmon & (\ti,Y_\tip,Y_\ip,\widehat{Z}_i) } \Big] h		 																																										\\
					&\le C \bE \Big[ \big( 1 + \abs{Y_\tip}^q + \abs{Y_\ip}^q + \abs{\widehat{Z}_i}^p \big) 
												\big( 1_{\{ \abs{f(t_i,Y_\tip,\widehat{Z}_i)} > r(h) \}} + 1_{\{ \abs{f(t_i,Y_i,\widehat{Z}_i)} > r(h) \}} \big) \Big] \ h																			\\
					&\le C \bE \Big[ 1 + \abs{Y_\tip}^{2q} + \abs{Y_\ip}^{2q} + \abs{\widehat{Z}_i}^{2p} \Big]^\half	
												\bE\Big[ 1_{\{ \abs{f(t_i,Y_\tip,\widehat{Z}_i)} > r(h) \}} + 1_{\{ \abs{f(t_i,Y_\ip,\widehat{Z}_i)} > r(h) \}} \Big]^\half \ h																	\\
					&\le C \Big( \bE\big[ \abs{f(t_i,Y_\tip,\widehat{Z}_i)}^l \big] + \bE\big[ \abs{f(t_i,Y_\ip,\widehat{Z}_i)}^l \big]\Big)^\half r(h)^{-\frac{l}{2}} \ h .	
				\end{align*}
			Using \hfgrowth we therefore have
				\begin{align*}
					\bE\Big[ \abs{ \Rmon (\ti,Y_\tip,Y_\ip,\widehat{Z}_i) } \Big] h
						&\le C \Big( 1 + \bE[\abs{Y_\tip}^{lm} ] + \bE[\abs{Y_\ip}^{lm} ] + \bE[\abs{\widehat{Z}_i}^{l}] \Big)^\half h^{\frac{\beta l}{2} +1} 																	\\
						&\le C h^{\frac{\beta l}{2} +1}	.
				\end{align*}
			Define $\mu_2 = -1+\frac{\beta l}{2} + 1=\frac{\beta l}{2}$.

			We have the desired result with $\mu = \min( \mu_1, \mu_2) = \frac{\beta l}{2}$.
			Note that since $\beta > 0$, by taking $l$ big enough one can make the exponent of $h$ be as big as wanted.
			Naturally, the constant $C$ depends on the powers $q$, $p$ and $l$ eventually chosen.

		\paragraph*{}			
		\textit{Case 3.}
			We assume again that $\RregY$ and $\Rmon$ satisfy \htfcvg.3 with the same constants.
			In practice, for the inner taming by projection, we find $\RregY = 0$, and $q=m$ and $p=0$ for $\Rmon$. 
			Let us start with $\RregY$. 
			Using $1_{A \cup B} \le 1_A + 1_B$ and the Cauchy-Schwartz inequality,
				\begin{align*}
					\bE\Big[ \abs{ \RregY & (\ti,Y_\tip,Y_\ip,\widehat{Z}_i) }^2 \Big]  h^2																																											\\
					&\le C \bE \Big[ \big( 1 + \abs{Y_\tip}^{q} + \abs{Y_\ip}^{q} +  \abs{\widehat{Z}_i}^{p} \big)^2 \big( 1_{\{ \abs{Y_\tip} > r(h) \}} + 1_{\{ \abs{Y_i} > r(h) \}} \big)^2 \Big] \ h^2		\\
					&\le C \bE \Big[ \big( 1 + \abs{Y_\tip}^{2q} + \abs{Y_\ip}^{2q}  + \abs{\widehat{Z}_i}^{2p} \big) \big( 1_{\{ \abs{Y_\tip} > r(h) \}} + 1_{\{ \abs{Y_i} > r(h) \}} \big) \Big] \ h^2			\\
					&\le C \bE \Big[ 1 + \abs{Y_\tip}^{4q} + \abs{Y_\ip}^{4q} + \abs{\widehat{Z}_i}^{4p} \Big]^\half	
												\Big( \bE\big[ 1_{\{ \abs{Y_\tip} > r(h) \}} \big] + \bE\big[ 1_{\{ \abs{Y_\ip} > r(h) \}} \big]\Big)^\half \ h^2		.	
				\end{align*}
			Now, we systematically use 
			the moment bounds from Theorem \ref{theorem---path.regularity} and Propositions \ref{proposition---moment.estimate} and \ref{lemm:integrabilityWidehatZi}, 
			as well as the Markov inequality with a power $l \ge 1$ yet to be determined, 
			to have
				\begin{align*}
					\bE\Big[ \abs{ \RregY (\ti,Y_\tip,Y_\ip,\widehat{Z}_i) }^2 \Big]  h^2
						&\le C \Big( \bE\big[ \abs{Y_\tip}^l \big] + \bE\big[ \abs{Y_\ip}^l \big]\Big)^\half r(h)^{-\frac{l}{2}} \ h^2
						\le C  h^{\frac{\gamma l}{2}} \ h^2		.
				\end{align*}
			Define $\mu_1 = -1+\frac{\gamma l}{2} + 2 = \frac{\gamma l}{2} + 1$.
			
			Let us now handle $\Rmon$.
			We use the Cauchy-Schwartz inequality, the inequality $(\sum_{i=1}^n a_i)^k \le n^{k-1} \sum_{i=1}^n a_i^k$, 
			the Markov inequality with a power $l \ge 1$ yet to be determined, 
			and the moment bounds from Theorem \ref{theorem---path.regularity} and Propositions \ref{proposition---moment.estimate} and \ref{lemm:integrabilityWidehatZi}.
				\begin{align*}
					\bE\Big[ \Rmon & (\ti,Y_\tip,Y_\ip,\widehat{Z}_i)  \Big] h
					\\
						&
						\le C \bE \Big[ \big( 1 + \abs{Y_\tip}^q + \abs{Y_\ip}^q + \abs{\widehat{Z}_i}^p \big) 
						\big( \1_{\{ \abs{Y_\tip} > r(h) \}} 
						    + \1_{\{ \abs{Y_\ip} > r(h) \}} \big) \Big] \ h				
						\\
						&
						\le 
						C \bE \Big[ 1 + \abs{Y_\tip}^{2q} + \abs{Y_\ip}^{2q} + \abs{\widehat{Z}_i}^{2p} \Big]^\half	
						\bE\Big[ \1_{\{ \abs{Y_\tip} > r(h) \}} + \1_{\{ \abs{Y_\ip} > r(h) \}} \Big]^\half \ h
												\\
						&\le C \Big( \bE\big[ \abs{Y_\tip}^l \big] + \bE\big[ \abs{Y_\ip}^l \big]\Big)^\half r(h)^{-\frac{l}{2}} \ h
						\\
						&\le C h^{\frac{\gamma l}{2} +1} 		.
				\end{align*}
			Define $\mu_2 = -1+\frac{\gamma l}{2} + 1=\frac{\gamma l}{2}$.

			We have the desired result with $\mu = \min( \mu_1, \mu_2 ) = \frac{\gamma l}{2}$.
			Note that since $\gamma > 0$, by taking $l$ big enough one can make the exponent of $h$ be as big as wanted.
			Naturally, the constant $C$ depends on the power $l$ eventually chosen.
	\end{proof}

\subsection{Time-discretization errors} 

We now turn to the estimation of the local errors, that is to say the error between the BSDE dynamics and the time-discretization scheme \eqref{equation---reference--definition.of.the.scheme} introduced over one time-step, and in particular their total sum.

	\begin{proposition}		
	\label{proposition---estimation.of.the.discretization.error}	
		Assume \htreg, \htfcvg and \hH. There exists a constant $C \ge 0$ such that, for all $N \ge 1$,
			\begin{align*}
					\sum_{i=0}^{N-1} \bE\big[\, \abs{\overline{Z}_\ti - \widehat{Z}_i}^2 \big] h \le C \ h
				\qquad\text{and}\qquad 
					\sum_{i=0}^{N-1} \bE\big[\, \abs{Y_\ti - \widehat{Y}_i}^2 \big] \le C \ h^2 \ .
			\end{align*}
	\end{proposition}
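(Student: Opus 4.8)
The plan is to establish both estimates as sums over the grid of \emph{one-step} discretization errors, proving the bound for $\widehat{Z}_i$ first and then feeding it into the bound for $\widehat{Y}_i$, since the $z$-regularity of $f$ couples the two. Throughout I would rely on the regularity of $f$ (\hfreg, \hfregY), the path-regularity theorem (Theorem \ref{theorem---path.regularity}) in a sufficiently high $L^p$, the uniform moment bounds of Propositions \ref{proposition---moment.estimate} and \ref{lemm:integrabilityWidehatZi}, the increment control in the third part of \hH, the comparison $|f^h|\le|f|$ and \hfgrowth, and the consistency assumption \htfcvg to handle $R^h := f-f^h$.

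For the $Z$-estimate, I would write both $\overline{Z}_\ti$ and $\widehat{Z}_i$ as the conditional expectations defining them and subtract. Using $\bE_i[\Delta W_\tip/h - H_\ip]=0$, the difference of the $Y_\tip$-weighted parts equals $\bE_i[(Y_\tip - \bE_i[Y_\tip])(\Delta W_\tip^*/h - H_\ip^*)]$, which Cauchy--Schwarz bounds by $\bE_i[|Y_\tip - \bE_i[Y_\tip]|^2]^{1/2}$ times $\bE_i[|\Delta W_\tip/h - H_\ip|^2]^{1/2}$; the first factor is controlled exactly as in the proof of Proposition \ref{lemm:integrabilityWidehatZi} (via the BSDE dynamics over $[\ti,\tip]$, yielding a bound in terms of $h\int|f|^2$ and $\int_\ti^\tip|Z_u|^2\ud u$) and the second by \hH. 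The remaining pieces --- the driver integral $\bE_i[\int_\ti^\tip f\,\ud u\cdot\Delta W_\tip^*/h]$ against $\overline{Z}_\ti$, and the $f^h(\ti,Y_\tip,0)h\,H_\ip^*$ term inside $\widehat{Z}_i$ --- are handled by $|f^h|\le|f|$, \hfgrowth and the moment bounds. Summing over $i$, the $\int_\ti^\tip|Z_u|^2\ud u$ contributions telescope to $\bE[\int_0^T|Z_u|^2\ud u]\le C$ while every other piece carries a spare power of $h$, which gives $\sum_i \bE[|\overline{Z}_\ti - \widehat{Z}_i|^2]\,h \le C h$.

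For the $Y$-estimate, I would start from the identity $Y_\ti - \widehat{Y}_i = \bE_i[\int_\ti^\tip (f(u,Y_u,Z_u) - f^h(\ti,Y_\tip,\widehat{Z}_i))\,\ud u]$ and telescope the integrand through $f(\ti,Y_\tip,Z_u)$ and $f(\ti,Y_\tip,\widehat{Z}_i)$. The time-and-$y$ part is bounded using \hfreg and \hfregY; a conditional Jensen/Cauchy--Schwarz step turns it into $h\int_\ti^\tip\bE[\,\cdot\,]\ud u$, and a further Cauchy--Schwarz in $\omega$ separates the polynomially growing weight $(1+|Y_u|^{m-1}+|Y_\tip|^{m-1})$, controlled by moments, from the increment $\bE[|Y_u-Y_\tip|^2]$ (or a higher power), controlled by path regularity, yielding $O(h^3)$ per step. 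The $z$-part is $L_z|Z_u-\widehat{Z}_i|$, and splitting $Z_u-\widehat{Z}_i=(Z_u-\overline{Z}_\ti)+(\overline{Z}_\ti-\widehat{Z}_i)$ lets me invoke the $Z$ path-regularity and the $Z$-estimate just proved, which is again $O(h^2)$ after summation because of the extra factor $h$. Finally the genuinely new term $R^h(\ti,Y_\tip,\widehat{Z}_i)\,h$ is controlled by \htfcvg exactly along the lines of the treatment of $\Rmon$ and $\RregY$ in Proposition \ref{proposition---stability.of.the.tamed.scheme.NEWVERSION.FINAL.DEFINITIVE.FOR.REAL}: under \htfcvg.1 through the polynomial bound and the moment estimates, and under \htfcvg.2--3 through the indicator together with the Markov inequality and a free exponent $l$ taken large enough. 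Summing the three pieces over $i$ gives $\sum_i \bE[|Y_\ti - \widehat{Y}_i|^2] \le C h^2$.

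I expect the main obstacle to be the bookkeeping forced by the polynomial growth of $f$: the local Lipschitz constant in $y$ grows like $|y|^{m-1}$, so one cannot use $L^2$ path-regularity alone but must combine higher-moment path-regularity with Cauchy--Schwarz to decouple the growing weight from the increments, relying on the fact that $\xi$ and hence $Y$ have all moments. A secondary difficulty is the cross-dependence of the $Y$-error on the $Z$-error through the $z$-Lipschitz term: this dictates the order of the proof and requires checking that the coupling term retains the spare factor $h$ needed to land at $O(h^2)$ rather than merely $O(h)$, and that the $R^h$ contribution is of the same order as (and hence absorbed by) the stability imperfections.
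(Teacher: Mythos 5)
Your proposal is correct and follows essentially the same route as the paper: decompose $\overline{Z}_\ti-\widehat{Z}_i$ and $Y_\ti-\widehat{Y}_i$ into one-step telescoping differences, control the $t$- and $y$-increments via \hfreg/\hfregY combined with higher-moment path regularity and Cauchy--Schwarz, control the $\Delta W_\tip/h-H_\ip$ discrepancy via \hH.3, feed the $Z$-estimate into the $z$-Lipschitz term of the $Y$-estimate, and absorb $R^h=f-f^h$ via \htfcvg with the Markov-inequality/free-exponent argument (which is precisely the content of Lemma \ref{lemma---vanishing.effect.of.taming.the.driver}). The only deviations are cosmetic: you handle the $H$-discrepancy through $Y_\tip-\bE_i[Y_\tip]$ rather than through $\int_\ti^\tip Z_u\,\ud W_u$ (equivalent by the BSDE dynamics), and you place the $f\to f^h$ transition at the argument $\widehat{Z}_i$ rather than $\overline{Z}_\ti$, which is harmless since Proposition \ref{lemm:integrabilityWidehatZi} supplies the needed moments of $\widehat{Z}_i$.
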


The proof of this estimate is split in two parts. The first is the estimations for the $Z$-component, while the second those for the $Y$-component. 

	\begin{proof}[Proof of the estimate for the $Z$-component in Proposition \ref{proposition---estimation.of.the.discretization.error}] 
		First recall that from the martingale increment property of $H_\ip$ we have
			\begin{align*}
				\bE_i\left[ \int_\ti^\tip Z_u \ud W_u \ H_\ip^* \right] 
					= \bE_i\left[ \Big( Y_\tip + \int_\ti^\tip f(u,Y_u,Z_u) \udu \Big) \ H_\ip^* \right] \ .
			\end{align*}
		We write 
			\begin{align*}
				\overline{Z}_\ti - \widehat{Z}_i 
					&= \bE_i\Bigg[ \int_\ti^\tip Z_u \ud W_u \ \frac{\Delta W_\tip^*}{h} \Bigg] - \bE_i\Bigg[ \Big(Y_\tip + (1-\theta') f^h(t_i,Y_\tip,0)h \Big) H_\ip^* \Bigg]							\\
					&= \bE_i\Bigg[ \int_\ti^\tip Z_u \ud W_u \ \frac{\Delta W_\tip^*}{h} \Bigg] - \bE_i\Bigg[ \int_\ti^\tip Z_u \ud W_u \ H_\ip^* \Bigg]													\\
							& \qquad + \bE_i\Bigg[ \Big( Y_\tip + \int_\ti^\tip f(u,Y_u,Z_u) \udu \Big) \ H_\ip^* \Bigg] 																											\\
							& \qquad \qquad - \bE_i\Bigg[ \Big(Y_\tip + (1-\theta') f^h(Y_\tip,0)h \Big) H_\ip^* \Bigg] .
			\end{align*}		
		Regrouping the terms yields
			\begin{align*}
				\overline{Z}_\ti - \widehat{Z}_i
					&= \bE_i\Bigg[ \int_\ti^\tip Z_u dW_u \bigg(\frac{\Delta W_\tip}{h} - H_\ip \bigg)^* \Bigg]
							   + \bE_i\Bigg[ \theta' \int_\ti^\tip f(u,Y_u,Z_u) \udu \ H_\ip^* \Bigg]
								\\
							&\hspace{3cm}	+ \bE_i\Bigg[ (1-\theta')\int_\ti^\tip f(u,Y_u,Z_u) - f^h(t_i,Y_\ip,0) \udu \ H_\ip^* \Bigg] .
			\end{align*}
		Further decomposing the last term leads finally to 
			\begin{align*}
				\overline{Z}_\ti - \widehat{Z}_i 			
					&= \bE_i\Bigg[ \int_\ti^\tip Z_u dW_u \bigg(\frac{\Delta W_\tip}{h} - H_\ip \bigg)^* \Bigg]
									+ \bE_i\Bigg[ \theta' \int_\ti^\tip f(u,Y_u,Z_u) \udu \ H_\ip^* \Bigg] 																																		\\
							&\qquad	+ (1-\theta') \bE_i\Bigg[ \int_\ti^\tip f(u,Y_u,Z_u) - f(u,Y_\tip,Z_u) \udu \ H_\ip^* \Bigg] 																									\\
							&\qquad	+ (1-\theta') \bE_i\Bigg[ \int_\ti^\tip f(u,Y_\tip,Z_u) - f(u,Y_\tip,0) \udu \ H_\ip^* \Bigg] 																									\\
							&\qquad	+ (1-\theta') \bE_i\Bigg[ \int_\ti^\tip f(u,Y_\tip,0) - f(t_i,Y_\tip,0) \udu \ H_\ip^* \Bigg] 																									\\
							&\qquad	+ (1-\theta') \bE_i\Bigg[ \int_\ti^\tip f(t_i,Y_\tip,0) - f^h(t_i,Y_\ip,0) \udu \ H_\ip^* \Bigg]																									\\
					&= \cE_H + \cE_{\theta'} + \cE_{\mathrm{PR:Y}} + \cE_{Z} 
					+ \cE_{t} +\cE_{\mathrm{tamed.f}}.
			\end{align*}
		We now want to estimate (the expected square of) each of these terms $\cE_\cdot$.
			
		\paragraph*{}\textit{Estimation of $\cE_H$.}
		Using the Cauchy--Schwartz inequality and the It\^o isometry we have
			\begin{align*}
				\abs{\cE_H}^2 
					&\le d\, \bE_i\Bigg[ \int_\ti^\tip \abs{Z_u}^2 \udu \Bigg] \bE_i\Bigg[ \Abs{\frac{\Delta W_\tip}{h} - H_\ip }^2 \Bigg] 																			\\
					&=   d\, \bE_i\Bigg[ \int_\ti^\tip \abs{Z_u}^2 \udu \Bigg] \bE\Bigg[ \Abs{\frac{\Delta W_\tip}{h} - H_\ip }^2 \Bigg] ,
			\end{align*}
		since $\frac{\Delta W_\tip}{h} - H_\ip$ is independent from $\F_i$.
		Hence, taking expecations,
			\begin{align*}
				\bE\Big[\abs{\cE_H}^2\Big]
					&\le d\, \bE\Bigg[ \int_\ti^\tip \abs{Z_u}^2 \udu \Bigg] \bE\Bigg[ \Abs{\frac{\Delta W_\tip}{h} - H_\ip }^2 \Bigg] .
			\end{align*}
			
		\paragraph*{}\textit{Estimation of $\cE_{\theta'}$.}
		Using the Cauchy--Schwartz inequality and \hH we have
			\begin{align*}
				\abs{\cE_{\theta'}}^2 
					&\le {\theta'}^2 \bE_i\Bigg[ \Abs{\int_\ti^\tip f(u,Y_u,Z_u) \udu }^2 \Bigg] \bE_i\Big[ \Abs{H_\ip}^2 \Big]
					\\
					&\le {\theta'}^2 \bE_i\bigg[ h \int_\ti^\tip \Abs{f(u,Y_u,Z_u)}^2 \udu \bigg] \frac{\Lambda d}{h} .
			\end{align*}		
		Hence, taking expectations, since $\Lambda \le 1$, $\bE\Big[\abs{\cE_{\theta'}}^2\Big] \le {\theta'}^2 d \ \bE\big[ \int_\ti^\tip \Abs{f(u,Y_u,Z_u)}^2 \udu \big]$ .
	
		\paragraph*{}\textit{Estimation of $\cE_{\mathrm{PR:Y}}$.}
		Using the Cauchy--Schwartz inequality and \hH as above, and then the $Y$-regularity \hfregY,
			\begin{align*}
				\abs{\cE_{\mathrm{PR:Y}}}^2 
					&\le (1-\theta')^2 \bE_i\Bigg[ h \int_\ti^\tip \Abs{f(u,Y_u,Z_u) - f(u,Y_\tip,Z_u)}^2 \udu \Bigg] \frac{\Lambda d}{h}																								\\
					&\le (1-\theta')^2 \Lambda d \, L_y^2\  \bE_i\Bigg[ \int_\ti^\tip (1+\abs{Y_u}^{m-1}+\abs{Y_\tip}^{m-1})^2 \Abs{Y_u - Y_\tip}^2 \udu \Bigg] .
			\end{align*}		
		When taking the expectation we obtain, using the Cauchy--Schwartz inequality, $\Lambda \le 1$, and that $Y\in\cS^p$ for any $p\geq 2$
			\begin{align}
			\nonumber
				\bE\Big[\abs{\cE_{\mathrm{PR:Y}}}^2\Big]
				&\le 3^{3/2} (1-\theta')^2 d \, L_y^2
				\big(1 + 2 \|Y\|_{\cS^{4(m-1)}}^{4(m-1)}\big)^{\frac12}
									\int_\ti^\tip \bE\Big[ \Abs{Y_u - Y_\tip}^4 \Big]^\half \udu
				\\
				\label{errorPR:Y}
				&
			  \le 3^{3/2} (1-\theta')^2 d \, L_y^2 
				C_Y  \ h \  \big(\mathrm{REG}_{Y,4}(h)\big)^\half
			\end{align}
	With the term $\mathrm{REG}_{Y,4}(h)$ following from the path-regularity Theorem \ref{theorem---path.regularity} and from Theorem \ref{theo:BasicExistandUniqTheo} it holds that $1 + 2 \|Y\|_{\cS^{4(m-1)}}^{4(m-1)}\leq C_Y$ for some constant $C_Y>0$.
	
		\paragraph*{}\textit{Estimation of $\cE_{Z}$.}
		Using the Cauchy--Schwartz inequality and \hH as above, and then the $Z$-regularity \hfreg, and $\Lambda \le 1$,
			\begin{align*}
				\abs{\cE_{Z}}^2 
					&\le (1-\theta')^2 \bE_i\Bigg[ h \int_\ti^\tip \Abs{f(u,Y_\tip,Z_u) - f(u,Y_\tip,0)}^2 \udu \Bigg] \frac{\Lambda d}{h}																								\\
					&\le (1-\theta')^2 d\, \Lambda \, L_z^2\  \bE_i\Bigg[ \int_\ti^\tip \Abs{Z_u}^2 \udu \Bigg] 
					\\
					& \qquad \qquad \Rightarrow \bE\big[\abs{\cE_{Z}}^2\big] \le (1-\theta')^2 d \, L_z^2\  \bE\bigg[ \int_\ti^\tip \Abs{Z_u}^2 \udu \bigg].
			\end{align*}		
		
		\paragraph*{}\textit{Estimation of $\cE_{t}$.}
		Using the Cauchy--Schwartz inequality and \hH as above, and then the $t$-regularity \hfreg, and $\Lambda \le 1$,
			\begin{align*}
				\abs{\cE_{t}^2 }
					&\le (1-\theta')^2 \bE_i\Bigg[ h  \int_\ti^\tip \Abs{f(u,Y_\tip,0) - f(t_i,Y_\ip,0)}^2 \udu \Bigg] \frac{\Lambda d}{h}														\\
					&\le (1-\theta')^2 d L_t \frac{h^{2}}{2}  \le (1-\theta')^2 d L_t \, h^{2} .
			\end{align*}		
	
		\paragraph*{}\textit{Estimation of $\cE_{\mathrm{tamed.f}}$.}
		Using the Cauchy--Schwartz inequality and \hH as above,
			\begin{align*}
				\abs{\cE_{\mathrm{tamed.f}}}^2 
					&\le (1-\theta')^2 \bE_i\Bigg[ h \int_\ti^\tip \Abs{f^h(t_i,Y_\tip,0) - f^h(t_i,Y_\ip,0)}^2 \udu \Bigg] \frac{\Lambda d}{h}																								\\
					&\le (1-\theta')^2 \Lambda d h \ \bE_i\Big[ \Abs{(f-f^h)(Y_\tip,0)}^2 \Big] .
			\end{align*}		
		Hence, in expectation, 
		$\bE\big[\abs{\cE_{\mathrm{tamed.f}}}^2\big] \le (1-\theta')^2 d h \ \bE\big[ \Abs{(f-f^h)(t_i,Y_\tip,0)}^2 \big]$.
		
		\paragraph*{}\textit{Gathering the estimates.}	
		We finally obtain 
			\begin{align*}
				&
				\sum_{i=0}^{N-1} \bE\Big[ \abs{\overline{Z}_\ti - \widehat{Z}_i}^2 \Big] h
				\\
				&
				\le 
				6 d h\, \bE\Bigg[ \int_0^T \abs{Z_u}^2 \udu \Bigg] \max_{i=0 \ldots N-1} \bE\Bigg[ \Abs{\frac{\Delta W_\tip}{h} - H_\ip }^2 \Bigg]	
				+ 6 {\theta'}^2 d h \ \bE\bigg[ \int_0^T \Abs{f(u,Y_u,Z_u)}^2 \udu \bigg]
				\\
				& 
				\qquad + 6\cdot 3^{3/2} (1-\theta')^2 d h \, L_y^2  C_Y  \ h \  \big(\mathrm{REG}_{Y,4}(h)\big)^\half
				+ 6 (1-\theta')^2 d h \, L_z^2\  \bE\Bigg[ \int_0^T \Abs{Z_u}^2 du \Bigg]
				%
				\\
				& 
				\qquad 	 
				+ 6 (1-\theta')^2 d L_t \, h^{2} 
				+ 6 (1-\theta')^2 d h^2 \sum_{i=0}^{N-1} \bE\Big[ \Abs{(f-f^h)(\ti,Y_\tip,0)}^2 \Big].
			\end{align*}
		Here, $\mathrm{REG}_{Y,4}(h) = \sup_{\abs{s-t}\le h} \bE\big[ \Abs{Y_s - Y_t}^4 \big]$.
		From the path-regularity Theorem \ref{theorem---path.regularity}, there exists $C_{\mathrm{PR}}$ such that $\mathrm{REG}_{Y,4}(h)^\half \le C_{\mathrm{PR}} \, h$.
		Consequently, there exists a constant $C$ (independent of $N$) such that
			\begin{align*}
				\sum_{i=0}^{N-1} \bE\Big[ \abs{\overline{Z}_\ti - \widehat{Z}_i}^2 \Big] h
					&\le 			C h \sum_{i=0}^{N-1} \bE\Bigg[ \Abs{\frac{\Delta W_\tip}{h} - H_\ip }^2 \Bigg]
									+ C {\theta'}^2 \ h
									+ C L_y^2 \, h^2																																																								\\
					&\quad 	+ C (1-\theta')^2 L_z^2 \ h
									+ C (1-\theta')^2 L_t \ h^2 
									+ C h^2 \sum_{i=0}^{N-1} \bE\Big[ \Abs{(f-f^h)(\ti,Y_\tip,0)}^2 \Big] .
			\end{align*}
			The result then follows from the \hH.3 and Lemma \ref{lemma---vanishing.effect.of.taming.the.driver} below.		
	\end{proof}
	
	\begin{lemma}		
	\label{lemma---vanishing.effect.of.taming.the.driver}
		Under \htfcvg there exists a constant $C \ge 0$ such that, for all $N$,
			\begin{align*}
				\sum_{i=0}^{N-1} \bE\Big[ \abs{(f-f^h)(\ti,Y_\tip,0)}^2 \Big] \le C \quad\text{and}\quad \sum_{i=0}^{N-1} \bE\Big[ \abs{(f-f^h)(\ti,Y_\tip,\overline{Z}_\ti)}^2 \Big] \le C .
			\end{align*}		
	\end{lemma}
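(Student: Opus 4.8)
The plan is to exploit two structural facts: the sum has exactly $N=T/h$ terms, so it suffices to bound each summand by $Ch$; and along the \emph{true} solution all moments are under control. From the a priori estimates of BSDE theory (Theorem \ref{theo:BasicExistandUniqTheo}) one has $Y\in\cS^p$ for every $p\ge 2$, hence $\sup_t\bE[\abs{Y_t}^p]\le C$; and from the pointwise bound $\abs{Z_t}\le C(1+\abs{X_t})$ (cf.\ \cite{LionnetReisSzpruch2015}) together with Jensen's inequality, $\bE[\abs{\overline{Z}_\ti}^p]\le \tfrac1h\int_\ti^\tip \bE[\abs{Z_u}^p]\,\ud u\le C$ uniformly in $i$ and $N$. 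Since $f-f^h=R^h$ and, by \hfgrowth together with $\abs{f^h}\le\abs{f}$, the quantity $\abs{f(\ti,Y_\tip,\cdot)}$ also has uniformly bounded moments, every expression below is integrable to any order with bounds independent of $i$ and $N$. I would prove the first estimate in detail; the second (with $\overline{Z}_\ti$ in place of $0$) is identical once these moment bounds are in hand.

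I would then split according to the three alternatives of \htfcvg, mirroring the case analysis already carried out in Proposition \ref{proposition---stability.of.the.tamed.scheme.NEWVERSION.FINAL.DEFINITIVE.FOR.REAL}. In Case \htfcvg.1 the estimate is direct:
\[
\bE\big[\abs{R^h(\ti,Y_\tip,0)}^2\big]\le C\big(1+\bE[\abs{Y_\tip}^{2q}]\big)h^{2\alpha}\le Ch^{2\alpha},
\]
so the sum is at most $N\cdot Ch^{2\alpha}=CTh^{2\alpha-1}$. This is bounded uniformly in $N$ as soon as $\alpha\ge\tfrac12$, which is the relevant regime; and in any case the factor $h^2$ that this sum carries in Proposition \ref{proposition---estimation.of.the.discretization.error} already renders the corresponding contribution $O(h)$ for every $\alpha>0$.

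The substance lies in Cases \htfcvg.2 and \htfcvg.3, where $R^h$ is controlled by the indicator of a rare event. Here I would use the Cauchy--Schwarz inequality to decouple the polynomial prefactor from the indicator,
\[
\bE\big[\abs{R^h(\ti,Y_\tip,0)}^2\big]\le C\,\bE\big[1+\abs{Y_\tip}^{4q}\big]^{1/2}\,\bP\big(\abs{f(\ti,Y_\tip,0)}>r(h)\big)^{1/2}
\]
(resp.\ with the event $\{\abs{Y_\tip}>r(h)\}$ in Case \htfcvg.3), and then apply Markov's inequality with a free power $l\ge 1$ still to be fixed, $\bP(\abs{f(\ti,Y_\tip,0)}>r(h))\le r(h)^{-l}\bE[\abs{f(\ti,Y_\tip,0)}^l]\le Cr_0^{-l}h^{\beta l}$, using \hfgrowth and the uniform moment bounds in the last step. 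This gives $\bE[\abs{R^h(\ti,Y_\tip,0)}^2]\le Ch^{\beta l/2}$ (resp.\ $Ch^{\gamma l/2}$), hence the sum is bounded by $CTh^{\beta l/2-1}$ (resp.\ $CTh^{\gamma l/2-1}$).

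The decisive point is that $\beta>0$ (resp.\ $\gamma>0$) is fixed while $l$ is ours to choose, so taking $l\ge 2/\beta$ (resp.\ $l\ge 2/\gamma$) makes the exponent nonnegative and the sum uniformly bounded in $N$. I expect the main obstacle to be exactly this interplay in Cases 2--3: one must dominate the polynomial factor $1+\abs{Y_\tip}^{4q}$ by a moment bound that does \emph{not} depend on $l$, so that increasing $l$ only helps through the Markov probability and does not inflate the prefactor. This is precisely what the all-moments control of $Y$, of $\overline{Z}_\ti$, and of $f(\ti,Y_\tip,\cdot)$ secures, and it is the reason the argument goes through without any restriction on the time-step.
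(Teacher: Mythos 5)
Your proposal is correct and follows essentially the same route as the paper's own proof: the same case split over \htfcvg.1--3, the same direct moment bound in Case 1 (with the same implicit reliance on $\alpha\ge\tfrac12$ there), and in Cases 2--3 the same Cauchy--Schwarz decoupling of the polynomial prefactor from the indicator followed by Markov's inequality with a free power $l$ chosen so that $\beta l/2\ge 1$ (resp.\ $\gamma l/2\ge 1$). The only cosmetic difference is that you rederive the uniform moment bound on $\overline{Z}_\ti$ from the pointwise bound $\abs{Z_t}\le C(1+\abs{X_t})$, whereas the paper cites it directly from Theorem \ref{theorem---path.regularity}.
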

	
We postpone the proof of the lemma above to Appendix \ref{sec--proof.of.lemma---vanishing.effect.of.taming.the.driver} and proceed with the second part of the proof of Proposition \ref{proposition---estimation.of.the.discretization.error}.
		
	\begin{proof}[Proof of the estimate for the $Y$-component of Proposition \ref{proposition---estimation.of.the.discretization.error}] 
		We first decompose
			\begin{align*}
				& Y_\ti - \widehat{Y}_i 
					= \bE_i\bigg[ \int_\ti^\tip f(u,Y_u,Z_u) \udu - f^h(\ti,Y_\tip,\widehat{Z}_i) h \bigg]					 		\\
					&= \bE_i\bigg[ \int_\ti^\tip f(u,Y_u,Z_u) - f(u,Y_\tip,Z_u) \udu \bigg]																																									
							+ \bE_i\bigg[ \int_\ti^\tip f(u,Y_\tip,Z_u) - f(u,Y_\tip,\overline{Z}_\ti) \udu \bigg]			\\
					& \quad + \bE_i\bigg[ \int_\ti^\tip f(u,Y_\tip,\overline{Z}_\ti) - f(\ti,Y_\tip,\overline{Z}_\ti) \udu \bigg]			
								+ \bE_i\bigg[ f(\ti,Y_\tip,\overline{Z}_\ti) - f^h(\ti,Y_\tip,\overline{Z}_\ti) \bigg] h		\\				
					& \quad 	+ \bE_i\bigg[ f^h(\ti,Y_\tip,\overline{Z}_\ti) - f^h(\ti,Y_\tip,\widehat{Z}_i) \bigg] h				\\
					&= \cE_{\mathrm{PR:Y}} + \cE_{\mathrm{PR:Z}} + \cE_{t} + \cE_{\text{tamed.f}} + \cE_{\tau_i(Z)}
			\end{align*}	
		We now estimate (the expected square of) each of these terms.
		
		\paragraph*{}\textit{Estimation of $\cE_\mathrm{PR:Y}$.}
		Using the Cauchy--Schwartz inequality and \hfregY we have
			\begin{align*}
				\abs{\cE_\mathrm{PR:Y}}^2 
					&\le \bE_i\bigg[ h \int_\ti^\tip \abs{f(u,,Y_u,Z_u) - f(u,Y_\tip,Z_u)}^2 \udu \bigg]																	\\
					&\le h L_y^2 \ \bE_i\Bigg[ \int_\ti^\tip (1+\abs{Y_u}^{m-1}+\abs{Y_\tip}^{m-1})^2 \Abs{Y_u - Y_\tip}^2 \udu \Bigg] .
			\end{align*}		
		Hence, taking expectations and arguing as in \eqref{errorPR:Y} we have 
			\begin{align*}
				\bE\Big[\abs{\cE_{\mathrm{PR:Y}}}^2\Big]
					&
					\le 
					h L_y^2\ \times
					C_Y  \ h \  \big(\mathrm{REG}_{Y,4}(h)\big)^\half
									.
			\end{align*}

		\paragraph*{}\textit{Estimation of $\cE_\mathrm{PR:Z}$.}
		Using Cauchy--Schwartz's inequality and \hfreg we have
			\begin{align*}
				\abs{\cE_\mathrm{PR:Z}}^2 
					\le \bE_i\bigg[ h \int_\ti^\tip \abs{f(u,Y_\tip,Z_u) - f(u,Y_\tip,\overline{Z}_\ti)}^2 \udu \bigg]
					\le h L_z^2 \ \bE_i\bigg[ \int_\ti^\tip \abs{Z_u - \overline{Z}_\ti}^2 \udu \bigg] .
			\end{align*}		
		Hence, taking expectations, we obtain 
		$\bE\big[\abs{\cE_{\mathrm{PR:Z}}}^2\big] \le h L_z^2 \ \bE\big[ \int_\ti^\tip \abs{Z_u - \overline{Z}_\ti}^2 \udu \big]$.

		\paragraph*{}\textit{Estimation of $\cE_{t}$.}
		Using Cauchy--Schwartz's inequality and \hfreg we have
			\begin{align*}
				\abs{\cE_{t}}^2 
					\le \bE_i\bigg[ h \int_\ti^\tip \abs{f(u,Y_\tip,\overline{Z}_\ti) - f(t_i,Y_\tip,\overline{Z}_\ti)}^2 \udu \bigg]
					\le L_t^2 \frac{h^3}{2}		
				\end{align*}		

		\paragraph*{}\textit{Estimation of $\cE_{\mathrm{tamed.f}}$.}
		Taking the square and using the Cauchy--Schwartz inequality,
			\begin{align*}
				\abs{\cE_{\mathrm{tamed.f}}}^2 
					&\le h^2 \ \bE_i\big[ \abs{f(\ti,Y_\tip,\overline{Z}_\ti) - f^h(\ti,Y_\tip,\overline{Z}_\ti)}^2 \big] .
			\end{align*}		
		Hence, taking expectations, we obtain $	\bE\big[\abs{\cE_{\mathrm{tamed.f}}}^2\big] \le h^2 \ \bE\big[ \abs{(f-f^h)(\ti,Y_\tip,\overline{Z}_\ti)}^2 \big]$.

		\paragraph*{}\textit{Estimation of $\cE_{\tau_i(Z)}$.}
		Taking the square, using the Cauchy--Schwartz inequality and using the $Z$-regularity \htreg we have
			\begin{align*}
				\abs{\cE_{\tau_i(Z)}}^2 
					\le h^2 \ \bE_i\Big[ \abs{f^h(\ti,Y_\tip,\overline{Z}_\ti) - f^h(\ti,Y_\tip,\widehat{Z}_i)}^2 \Big]
					\le h^2 (L^h_z)^2 \ \bE_i\Big[ \abs{\overline{Z}_\ti - \widehat{Z}_i }^2 \Big] .
			\end{align*}		
		Hence, taking expectations, we obtain $\bE\big[\abs{\cE_{\tau_i(Z)}}^2\big] \le h^2 (L^h_z)^2 \ \bE\big[ \abs{\overline{Z}_\ti - \widehat{Z}_i }^2 \big]$.

		\paragraph*{}\textit{Gathering the estimates.}
		We finally obtain
			\begin{align*}
				\sum_{i=0}^{N-1}  \bE\Big[ \abs{Y_\ti - \widehat{Y}_i}^2 \Big]
					&
					\le 
					4 . 3^{3/2} h^2 L_y^2 \, 
					C_Y \,  \big(\mathrm{REG}_{Y,4}(h)\big)^\half \times N
					%
						+ 4 h L_z^2 \times \mathrm{REG}_{Z,2}(h) 
						+ 5 h^3 L_t^2 \times N
						\\
						&
						+ 4 h^2 \ \sum_{i=0}^{N-1} \bE\bigg[ \abs{(f-f^h)(\ti,Y_\tip,\overline{Z}_\ti)}^2 \bigg]
						+ 4 h (L^h_z)^2 \ \sum_{i=0}^{N-1} \bE_i\bigg[ \abs{\overline{Z}_\ti - \widehat{Z}_i }^2 \bigg] h .
			\end{align*}
		Here we used again the notation set in the path-regularity Theorem \ref{theorem---path.regularity} for $\mathrm{REG}_{Y,4}(h)$ and $\mathrm{REG}_{Z,2}(h)$. 
		From the said result we have $\mathrm{REG}_{Y,4}(h)^\half \le C h$ and $\mathrm{REG}_{Z,2}(h) \le C h$
		and hence, using the estimates on the size of the solution (see Theorem \ref{theo:BasicExistandUniqTheo})
		the first two terms on the RHS of the above estimate are bounded above by some $C h^2$.  
		The same goes for the third term by Lemma \ref{lemma---vanishing.effect.of.taming.the.driver} and the first part of the proof guarantees that the same goes for the last term. 
		This completes the proof of Proposition \ref{proposition---estimation.of.the.discretization.error}.
	\end{proof}

\subsection{Proof of convergence (Theorem \ref{theorem---main--convergence.of.tamed.scheme})} 

We have established with Proposition \ref{proposition---stability.of.the.tamed.scheme.NEWVERSION.FINAL.DEFINITIVE.FOR.REAL} that the scheme is almost-stable, with the rate $\mu$ introduced in subsection \ref{sec:PC}. 
By the fundamental lemma \ref{lem:fundamental}, we therefore have the global error estimate
	\begin{align*}
		\big(\mathrm{ERR}_N\big)^2 \le C \bE[\,\abs{\xi - \xi^N}^2] + C \Bigg( \sum_{i=0}^{N-1} \frac{\tau_i(Y)}{h}   + \tau_i(Z)  \Bigg)  + C h^\mu .
	\end{align*}	
By assumption \hxiN, the first term is bounded above by $C h$ for some $C$. 
Proposition \ref{proposition---estimation.of.the.discretization.error} guarantees that the second term $\sum_{i=0}^{N-1} \frac{\tau_i(Y)}{h}   + \tau_i(Z)$ is also bounded above by $C h$.
Therefore, we have proven
$\big(\mathrm{ERR}_N\big)^2 \le C \, h + C h^\mu$, 
as claimed in Theorem \ref{theorem---main--convergence.of.tamed.scheme}.

\section{Qualitative properties: discrete comparison and preservation of positivity}
\label{section---qualitative.properties}

In this section we discuss, in the $1$-dimensional case, the preservation of order and (consequently) of positivity by the discretization scheme.  As was pointed out in \cite{LionnetReisSzpruch2015}, the standard explicit scheme may fail to preserve positivity.
Here we are interested in determining conditions under which one can guarantee that the $Y_i$ approximation remains positive when continous-time solutions is positive. This problem is of significant importance, qualitatively and for numerical stability. On the one hand, prices should be non-negative as well as sizes of populations and chemical quantities. On the other hand, the driver may be monotone only on $D=\R_+$ and should one input $Y_\ip$ not be a.s. positive, the scheme may explode.
We are aware of only two other works stating similar comparison results, see \cite{CheriditoStadje2013} and \cite{ChassagneuxRichou2016} but, even in the Lipschitz setting, they do not deal with explicit schemes. The discussion on the explicit scheme is, to the best of our knowledge, new.

The analysis below is based on a linearization technique, as in \cite{ChassagneuxRichou2016}. 
We first show a generic discrete comparison result and then positivity follows as a corollary. 
Essentially due to technicalities arising by the explicit component in the scheme, the comparison result is not as general as one might expect from the implicit scheme. 
The several result below require at least the following assumption.

\begin{assumption}
\label{assump:forpositivity}
Assume $n=1$, that \htregY holds with $\RregY=0$ and
\begin{align*} 
\text{for }\theta'\in[0,1]\qquad 
\sup_{i=0,\cdots,N-1}	
   h \Big( L_y^h +  L_z^h |H_\ip| + (1-\theta') h L_z^h |H_\ip| L_y^h
	 \Big)<1.
\end{align*}
\end{assumption}

Since we work in the one-dimensional setting, the above assumption is not a drawback. 

\begin{proposition}[Discrete comparison for the explicit scheme] 
\label{prop:comparisonforexplicitscheme} 
Let Assumption \ref{assump:forpositivity} hold. For $j\in\{1,2\}$ take the modified drivers $f^{h,j}$ and numerical terminal conditions $\xi^{N,j}$, as well as the outputs $(Y_i^j,Z_i^j)_{i = 0 \ldots N}$ obtained through scheme \eqref{equation---reference--definition.of.the.scheme}. 
Define, for $0 \leq i \leq N-1$, 
\begin{align}
\label{betacoeff22}
\beta_\ip
& :=\frac{f^{h,1}(\ti,Y_{\ip}^1, Z_i^1)-f^{h,1}(\ti,Y_{\ip}^2, Z_i^1)}{ Y_{\ip}^1-Y_{\ip}^2} \1_{\{Y_{\ip}^1- Y_{\ip}^2 \neq 0\}},
\\
\label{betacoeff22hat}
\widehat \beta_\ip
& :=\frac{f^{h,1}(\ti,Y_{\ip}^1,0)-f^{h,1}(\ti,Y_{\ip}^2, 0)}{ Y_{\ip}^1-Y_{\ip}^2} \1_{\{Y_{\ip}^1- Y_{\ip}^2 \neq 0\}}
\\
\label{gammacoeff22}
\gamma_\ip
& :=\frac{f^{h,1}(\ti,Y_{i+1}^2, Z_i^1)-f^{h,1}(\ti,Y_{i+1}^2, Z_i^2 )}{\abs{Z_i^1-Z_i^2}^2} \left(Z_i^1-Z_i^2 \right)^* \1_{\{Z_i^1-Z_i^2 \neq 0\}},
\quad \text{and} 
\\
B_\ip &:= 1+h\beta_\ip 
			 + h \gamma_\ip \big( 1+(1-\theta')h\widehat\beta_\ip \big) H_\ip^*.
\end{align} 
Assume further that $\gamma_\ip$ is $\cF_i$-measurable for all $i$; and that either $(f^{h,1}-f^{h,2})(\ti,Y^2_\ip,0)$ is $\cF_i$-measurable $\forall i$ or $\theta'=1$ or $\forall i$ $\gamma_\ip=0$. Then, with the convention $\prod_{j=k}^{l} \cdot = 1$ for $l<k$, 
\begin{align}
\label{eq:schemelinearizationcomparison} 
	Y_{i}^1- Y_{i}^2
	 = \bE_i\Bigg[ 
	 		(\xi^{N,1}- \xi^{N,2}) \prod_{j=i}^{N-1} B_\jp 
			+ h \sum_{j=i}^{N-1}  \big(f^{h,1}-f^{h,2}\big)(\tj,Y_{j+1}^2,Z_j^2) \prod_{k=i}^{j-1} B_{k+1}
	      \Bigg].
\end{align}
If $\xi^{N,1}- \xi^{N,2}\geq 0$ and $\big(f^{h,1}-f^{h,2}\big)(\ti,Y_{i+1}^2,Z_i^2)\geq 0$ for all $0 \leq i \leq N-1$ then $Y_{i}^1\geq Y_{i}^2$ for all $0 \leq i \leq N$.
\end{proposition}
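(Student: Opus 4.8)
The plan is to linearise the two driver differences that appear when one subtracts the two copies of scheme \eqref{equation---reference--definition.of.the.scheme}, assemble a one-step backward linear recursion for $\delta Y_i := Y_i^1 - Y_i^2$ whose coefficient is $B_\ip$, iterate it down to the terminal condition to obtain \eqref{eq:schemelinearizationcomparison}, and finally read off the comparison from the a.s.\ positivity of the $B_\ip$ guaranteed by Assumption \ref{assump:forpositivity}.

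Writing also $\delta Z_i := Z_i^1 - Z_i^2$, the $Y$-lines of the two schemes give $\delta Y_i = \bE_i[\delta Y_\ip + \{f^{h,1}(\ti,Y_\ip^1,Z_i^1) - f^{h,2}(\ti,Y_\ip^2,Z_i^2)\}h]$. First I would split the bracket by inserting $f^{h,1}(\ti,Y_\ip^2,Z_i^1)$ and $f^{h,1}(\ti,Y_\ip^2,Z_i^2)$, rewriting it as
\[
 \beta_\ip\,\delta Y_\ip + \delta Z_i\,\gamma_\ip + (f^{h,1}-f^{h,2})(\ti,Y_\ip^2,Z_i^2),
\]
with $\beta_\ip$, $\gamma_\ip$ the difference quotients \eqref{betacoeff22}, \eqref{gammacoeff22}; here the identity $\delta Z_i\,\gamma_\ip = f^{h,1}(\ti,Y_\ip^2,Z_i^1) - f^{h,1}(\ti,Y_\ip^2,Z_i^2)$ uses $(Z_i^1-Z_i^2)(Z_i^1-Z_i^2)^* = \abs{Z_i^1-Z_i^2}^2$. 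The same splitting of the $Z$-line, through $\widehat\beta_\ip$ from \eqref{betacoeff22hat}, expresses $\delta Z_i$ as $\bE_i$ of $((1+(1-\theta')\widehat\beta_\ip h)\delta Y_\ip + (1-\theta')(f^{h,1}-f^{h,2})(\ti,Y_\ip^2,0)h)H_\ip^*$.

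Next I would assemble the recursion. As $\delta Z_i$ and (by hypothesis) $\gamma_\ip$ are $\cF_i$-measurable, $\bE_i[\delta Z_i\,\gamma_\ip] = \delta Z_i\,\gamma_\ip$, and inserting the expression for $\delta Z_i$ and carrying $\gamma_\ip$ inside $\bE_i$ produces, besides the $\delta Y_\ip$-terms, a spurious term $(1-\theta')h^2\,\bE_i[(f^{h,1}-f^{h,2})(\ti,Y_\ip^2,0)(H_\ip^*\gamma_\ip)]$. This is exactly what the three-way alternative is designed to remove: it vanishes when $\theta'=1$ (the factor $1-\theta'$), when $\gamma_\ip\equiv 0$ (the factor $H_\ip^*\gamma_\ip$), or when $(f^{h,1}-f^{h,2})(\ti,Y_\ip^2,0)$ is $\cF_i$-measurable, since then that factor and $\gamma_\ip$ leave $\bE_i$ and $\bE_i[H_\ip]=0$ from \hH closes the argument. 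The surviving $\delta Y_\ip$-terms recombine, together with $\beta_\ip$, into the scalar coefficient $B_\ip$, giving the one-step identity
\[
 \delta Y_i = \bE_i\big[ B_\ip\,\delta Y_\ip \big] + h\,\bE_i\big[ (f^{h,1}-f^{h,2})(\ti,Y_\ip^2,Z_i^2) \big].
\]
Since each $B_\ip$ is $\cF_\ip$-measurable, the tower property lets me pull it through successive conditional expectations, and a backward induction from $\delta Y_N = \xi^{N,1}-\xi^{N,2}$ yields precisely \eqref{eq:schemelinearizationcomparison} (with the empty-product convention).

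For the comparison, I would check $B_\ip > 0$ a.s. Because \htregY holds with $\RregY=0$, $f^{h,1}$ is globally $L_y^h$-Lipschitz in $y$, so $\abs{\beta_\ip},\abs{\widehat\beta_\ip}\le L_y^h$, while \htreg gives $\abs{\gamma_\ip}\le L_z^h$; hence $B_\ip \ge 1 - h(L_y^h + L_z^h\abs{H_\ip} + (1-\theta')h L_z^h\abs{H_\ip}L_y^h) > 0$ by Assumption \ref{assump:forpositivity}. With every factor $B_\jp$ a.s.\ positive, each product in \eqref{eq:schemelinearizationcomparison} is nonnegative, so under $\xi^{N,1}-\xi^{N,2}\ge 0$ and $(f^{h,1}-f^{h,2})(\ti,Y_\ip^2,Z_i^2)\ge 0$ the conditioned quantity is a.s.\ nonnegative and $Y_i^1\ge Y_i^2$ for all $i$. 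I expect the main obstacle to be the bookkeeping around $\delta Z_i$: justifying that $\gamma_\ip$ and $\delta Z_i$ exit the outer conditional expectation, and that the spurious source term produced by the $z$-linearisation is annihilated in each branch of the measurability hypothesis through $\bE_i[H_\ip]=0$; the rest is a routine linear backward induction together with the elementary sign bound for $B_\ip$.
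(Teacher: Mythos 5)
Your proposal is correct and follows essentially the same route as the paper's own proof: the identical linearisation via $\beta_\ip$, $\widehat\beta_\ip$, $\gamma_\ip$, substitution of the $\delta Z_i$ expression into the $\delta Y_i$ recursion, annihilation of the spurious $(1-\theta')h^2$ cross-term through the three-way measurability alternative and $\bE_i[H_\ip]=0$, backward iteration to \eqref{eq:schemelinearizationcomparison}, and positivity of the $B_\ip$ from Assumption \ref{assump:forpositivity}. No gaps; the only difference is that you spell out the explicit lower bound on $B_\ip$ where the paper simply invokes the assumption.
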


\begin{remark}[On the assumptions of the comparison theorem]
Two of the assumptions stand as non-trivial, and perhaps slightly opaque, namely that $\gamma_\ip$ is $\cF_i$-measurable 
and the $\cF_i$-measurability of $\big(f^{h,1}-f^{h,2}\big)(\ti,Y^2_\ip,0)=0$. 
The reason for both is of technical nature and due to the presence of the $Y_\ip$-term in the scheme.

Concerning the first, it happens for instance when one is able to write $f(t,y,z)=\widetilde f(t,y)+\widehat f(t,z)$ $\forall t,y,z$ or when the $Y^2_\ip$'s are deterministic. 
When $f$ does not depend on $z$ then $\gamma_\ip=0$, this is a case of interest for reaction-diffusion equations. 
When $\theta'\neq 1$, the restriction $\big(f^{h,1}-f^{h,2}\big)(Y^2_\ip,0)$ is $\cF_i$-measurable is a real limitation to the comparison result, as in general $Y^2_\ip$ is not $\cF_i$-measurable. 
However, one is often interested in comparing the scheme against a constant: this will be case case when we prove a corollary on preservation of positivity.
\end{remark}

\begin{proof}
Let $i\in\{0,\cdots,N-1\}$. Under assumptions \htreg and \htregY (with $\RregY=0$) the random variables $\gamma$, $\beta$ and $\widehat \beta$ are well defined and satisfy:
$|\beta_\ip| \leq L_y^h$, $|\widehat \beta_\ip|\leq L_y^h$ and $
|\gamma_\ip|\leq L_z$.
Moreover, $\beta_\ip,\widehat \beta_\ip$ and $\gamma_\ip$ are $\cF_\ip$-adapted. 

Define $\delta Y_i:= Y_i^1- Y_i^2$, $\delta Z_i:=Z_i^1- Z_i^2$ and 
$\widehat{\delta f}_\ip := \big(f^{h,1}-f^{h,2}\big)(\ti,Y_{\ip}^2,0)$. 
Recalling \eqref{equation---reference--definition.of.the.scheme}, and the notation \eqref{betacoeff22}, \eqref{betacoeff22hat} and \eqref{gammacoeff22}, we can write 
\begin{align*}
\delta Z_i 
& 
=
\bE_i\Big[ 	\Big( \delta Y_\ip
            			+(1-\theta') \big[ f^{h,1}(\ti,Y^1_\ip,0)-f^{h,2}(\ti,Y^2_\ip,0)\big] h \Big)  H_\ip ^* \Big]
\\
&
= \bE_i\left[ \Big(\delta Y_{i+1} 
                      + (1-\theta') h \widehat \beta_\ip \delta Y_\ip 
					+ (1-\theta') h \widehat{\delta f}_\ip \Big) H^*_\ip \right]
\\
&
= \bE_i\left[\delta Y_{i+1} \Big(1 + (1-\theta') h \widehat \beta_\ip \Big) H^*_\ip 
					+ (1-\theta') h \widehat{\delta f}_\ip H^*_\ip \right].
\end{align*}
For the $\delta Y$ component, we first linearize the driver terms then inject the above expression for $\delta Z$. 
Namely, defining $\delta f_\ip:= \big(f^{h,1}-f^{h,2}\big)(\ti,Y_{\ip}^2,Z_i^2)$ we have 
\begin{align*} 
\delta Y_i
& 
=\bE_{i}\Big[ 
      \delta Y_{i+1}\big( 1	+h \beta_\ip \big)
			+h \gamma_\ip \delta Z_i +h\delta f_\ip\Big]
\\
&
=\bE_{i}\Big[ 
      \delta Y_{i+1}\Big(
			1+h\beta_\ip 
			 + h \gamma_\ip \big( 1+(1-\theta')h\widehat\beta_\ip \big) H_\ip^*
			\Big)	
\\
&
\hspace{5cm}	
	+h\delta f_\ip
	 +h \gamma_\ip (1-\theta') h \widehat{\delta f}_\ip H_\ip^*	 \Big]
\\
&
=\bE_{i}\Big[ 
      \delta Y_{i+1}B_\ip +h\delta f_\ip \Big],
\end{align*}
where we used the assumption that $\gamma_\ip$ is $\cF_i$-measurable and then that either $\theta'=1$ or $\gamma_\ip=0$ or $\widehat{\delta f}\ip$ is $\cF_i$-measurable, so the last term vanishes due to the fact that $\bE_i[H_\ip]=0$. Furthermore, it is clear that without this second assumption, it not possible to have a comparison result as it is not possible to control the sign of $H_\ip$. Iterating the last inequality from $i$ to $N$ yields \eqref{eq:schemelinearizationcomparison}.

\emph{The comparison statement:} from Assumption \ref{assump:forpositivity} it follows that all $B_i$ terms are positive and hence the comparison statement follows provided $\delta f_\ip\geq 0$ and $\delta Y_N\geq 0$. 
\end{proof}


\begin{remark}
One may also want to consider instead of \eqref{equation---reference--definition.of.the.scheme}, a scheme using as input $(Y_\ip,Z_\ip)$, for instance setting 
$Y_i = \bE_i[ Y_\ip + f^h_i(Y_\ip,Z_\ip)h ]$.
However, due to the presence of $Z_\ip$ in the scheme, 
one cannot guarantee a comparison result for this scheme.
\end{remark}

As a corollary of the previous result we have a preservation of positivity result.

\begin{corollary}[Preservation of positivity]
Let Assumption \ref{assump:forpositivity} hold.  If $\xi^N\geq 0$ and $f^h(\ti,0,0)\geq 0$ for all $0\leq i\leq N-1$ then $Y_i\geq 0$ for any $1\leq i\leq N$, in other words the tamed explicit scheme \eqref{equation---reference--definition.of.the.scheme} is positivity preserving. 

Moreover, if $\xi^N>0$ and $f^h(\ti,0,0)\geq 0$ for any $i\in\{0,\cdots, N-1\}$ then $Y_i>0$ $\forall i$.
\end{corollary}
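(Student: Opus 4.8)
The plan is to obtain the corollary as a direct application of the discrete comparison Proposition~\ref{prop:comparisonforexplicitscheme}, comparing the scheme against the degenerate scheme whose output is identically zero. Concretely, I would set scheme~$1$ to be the actual scheme, $f^{h,1}=f^h$ and $\xi^{N,1}=\xi^N$, with output $(Y_i^1,Z_i^1)=(Y_i,Z_i)$, and scheme~$2$ to be given by the null driver $f^{h,2}\equiv 0$ and null terminal condition $\xi^{N,2}=0$. A one-line backward induction in \eqref{equation---reference--definition.of.the.scheme} shows that scheme~$2$ produces $Y_i^2\equiv 0$ and $Z_i^2\equiv 0$. Note that only $f^{h,1}=f^h$ enters the linearisation coefficients $\beta_\ip,\widehat\beta_\ip,\gamma_\ip$, so no regularity is required of $f^{h,2}$, and Assumption~\ref{assump:forpositivity} holds by hypothesis.

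Next I would verify the two technical hypotheses of the proposition, which is exactly the favourable ``comparison against a constant'' situation flagged in the remark following it. Since $Y_{i+1}^2=0$ is deterministic and $Z_i^2=0$ while $Z_i^1=Z_i$ is $\cF_i$-measurable by construction of the scheme, the coefficient $\gamma_\ip$ of \eqref{gammacoeff22} is a deterministic function of $Z_i$, hence $\cF_i$-measurable; and $(f^{h,1}-f^{h,2})(\ti,Y^2_\ip,0)=f^h(\ti,0,0)$ is deterministic, hence $\cF_i$-measurable, satisfying the alternative hypothesis. Checking the sign conditions, $\xi^{N,1}-\xi^{N,2}=\xi^N\ge 0$ and $(f^{h,1}-f^{h,2})(\ti,Y^2_{i+1},Z_i^2)=f^h(\ti,0,0)\ge 0$, so the comparison statement yields $Y_i=Y_i^1\ge Y_i^2=0$ for all $i$, which is the first claim.

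For the strict positivity I would instead use the representation \eqref{eq:schemelinearizationcomparison}, which here reduces to $Y_i=\bE_i\big[\xi^N\prod_{j=i}^{N-1}B_\jp + h\sum_{j=i}^{N-1}f^h(\tj,0,0)\prod_{k=i}^{j-1}B_{k+1}\big]$. The crux is that each factor $B_\jp$ is strictly positive: from the bounds $|\beta_\ip|,|\widehat\beta_\ip|\le L^h_y$ and $|\gamma_\ip|\le L^h_z$ established in the proof of the proposition, the smallness condition of Assumption~\ref{assump:forpositivity} gives $|B_\ip-1|\le h\big(L^h_y+L^h_z|H_\ip|+(1-\theta')hL^h_z|H_\ip|L^h_y\big)<1$, whence $B_\ip>0$ almost surely. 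Consequently the products are strictly positive, the summation term is nonnegative since $f^h(\tj,0,0)\ge 0$, and $\xi^N\prod_{j=i}^{N-1}B_\jp>0$ a.s.\ when $\xi^N>0$; as the conditional expectation of a strictly positive integrable random variable is itself strictly positive, we conclude $Y_i>0$. The only genuinely delicate points are the two measurability verifications and the strict positivity of the $B_\ip$, both of which are resolved precisely because comparing against a constant renders $Y^2$ deterministic.
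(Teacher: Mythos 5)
Your proof is correct and follows essentially the same route as the paper: apply Proposition \ref{prop:comparisonforexplicitscheme} with $(Y^2,Z^2)\equiv 0$ and $f^{h,2}\equiv 0$, observe that the measurability hypotheses hold because $Y^2$ is deterministic, and read off positivity from the representation \eqref{eq:schemelinearizationcomparison} with $B_\ip>0$. You merely spell out the strict-positivity case, which the paper dismisses as trivial, and your justification of $B_\ip>0$ via Assumption \ref{assump:forpositivity} is exactly the intended one.
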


\begin{proof} 
In Proposition \ref{prop:comparisonforexplicitscheme} take $(Y^2_i,Z^2_i)_{i=0 \ldots N}=0$ and $f^{h,2}=0$. Under this setting the random variables $\gamma_\ip$ defined in \eqref{gammacoeff22} are $\cF_i$-measurable, as is $\widehat{\delta f}_\ip= f^{h,1}(\ti,0,0)-0$. The expression \eqref{eq:schemelinearizationcomparison} then simplifies to 
\begin{align*}
	Y^1_i
	&
	=
		\bE_i\Big[ Y^1_N \prod_{j=i}^{N-1}B_\jp 
		             + h \sum_{j=i}^{N-1} f^{h,1}(\ti,0,0) \prod_{k=i}^{j-1} B_{k+1} 
	             \Big].
\end{align*}
Under Assumption \eqref{assump:forpositivity} the random variables $B_\ip$ are all positive and since $\xi^N\geq 0$ and $f^h(\ti,0,0)\geq 0$ the statement follows. The particular case of the strict inequalities follow trivially.
\end{proof}

\section{Numerical simulations}
\label{section---numerical.simulations}

In this section, we illustrate with numerical simulations the study performed above regarding the convergence and qualitative properties of explicit schemes with tamed drivers. 

Our analysis was concerned with the time-discretization. In practice, we also need to approximate the conditional expectations in the scheme \eqref{equation---reference--definition.of.the.scheme}. We do so using the method of regression on a family of function, see for instance \cite{GobetTurkedjiev2016}. 
Given a uniform time-discretization grid with $N$ time-steps, $\pi^N = (t_i)_{i=0 \ldots N}$ with $t_i=i\, h$ and $h=T/N$, we therefore simulate a sample of $M$ paths (which are an approximation) of the forward process $X$ : $(X^{N}_{m,i})$, for $m=1 \ldots M$ and $i=0 \ldots N$. In our examples $X$ will be an arithmetic Brownian motion and simulated exactly (not using a numerical scheme for SDEs). We then use this sample for the regression, at each time $t_i$, on a family of $K$ functions, which we take to be the first $K$ Hermite polynomials.

\subsection{Observing the convergence of the schemes}

We start by observing the convergence of the schemes. 
%
We consider here the following FBSDE, with time-horizon $T=1$. The forward process $X$ in \eqref{canonicSDE} is an Brownian motion started at $x=0$, with drift $b=0$ and diffusion coefficient $\sigma=1$. The BSDE \eqref{canonicBSDE} has driver $f(t,y,z)=f(y) = -y^3$, which is monotone (decreasing) on $\R$, and terminal condition $\xi=g(X_T)$ where $g(x)=\textrm{id}(x) = x$ is unbounded. 

\paragraph*{} 
As explained in the introduction, we can consider many generic ways of obtaining a modified driver $f^h$ from $f$. Here we consider an example of inner-taming, $f^h(y) = f\big( T^h(y) \big)$, an example of outer-taming $f^h(y) = T^h\big( f(y) \big)$, and an example of multiplicative taming $f^h(y)= \chi^h(y) f(y)$.
We take the functions $T^h$ to be projections on a ball centered in $0$ with growing radius, and we take the taming factor $\chi^h(y)$ of the form $\frac{1}{1+F(y) \tilde{R}(h)^{-1}}$.
An analysis of these examples and how they fit in our framework is provided in Appendix \ref{appendix:VerifyingExamples}.
Specifically, we compute approximations of the solutions using the following schemes. 
	\begin{enumerate}
		\item (black) The implicit scheme.
		\item (blue) A modified explicit scheme with driver $f^h$ tamed from inside by a projection, $f^h(y) = f\big( T^h(y) \big)$,
				where $T^h$ is the projection of the centered ball of radius $r^h = r_0 h^{-\gamma}$, with $r_0=1$ and $\gamma = \frac{1}{2(m-1)}$, 
				$m=3$ being the degree of the polynomial $f$ (see Appendix \ref{appendix:VerifyingExamples} for the choice of $\gamma$).
		\item (green) A modified explicit scheme with driver $f^h$ tamed from outside by a projection, $f^h(y) = T^h\big( f(y) \big)$,
				where $T^h$ is the projection of the centered ball of radius $R^h = R_0 h^{-\beta}$, with $R_0=1.5$ and $\beta = \frac{1}{2}$.
		\item (cyan) A modified explicit scheme with driver $f^h$ given by $f^h(y)= \frac{f(y)}{1+\abs{y}^{m-1} \tilde{R}_0^{-1} h^{\alpha}}$,
				where $\tilde{R}_0 = 1$ and $\alpha = \half$.
	\end{enumerate}

\paragraph*{} 
We generate a sequence of uniform partitions $(\pi^{N})_{N}$ of $[0,T]$ with mesh $h=T/N$ for $N \in \{ 2, 4, 8, 16, \ldots , 2048 \}$ (we simulate first the Brownian paths on the finest partition, and then use these to compute the forward paths $X^N$ on all partition). 
Since we do not know the exact solution to the FBSDE, we use as a proxy the average $Y^{\textrm{proxy}}$ of the results returned by the schemes 1 and 2 for the finest time-grid. 
We measure as error the distance $\textrm{dist}(Y,Y') = \max_{i=0 \dots N} \bE\big[ \abs{Y_i - Y'_i}^2 \big]^{\half}$ between, on the one hand, the output $Y^N=(Y^N_i)_{i=0 \ldots N}$ of one of the schemes (1 to 4) and, on the other hand, the proxy $Y^{\textrm{proxy}}$ for the solution.

\paragraph*{} 
On Figure \ref{Figure---convergence-error.vs.N}, we plot first the error versus the number of time-steps (left picture) and then the computation time versus the error (right picture), both in log-log scales. 

\begin{figure}[htb]
	\label{Figure---convergence-error.vs.N}
	\centering
	\subfigure
	{	\includegraphics[scale=0.35]{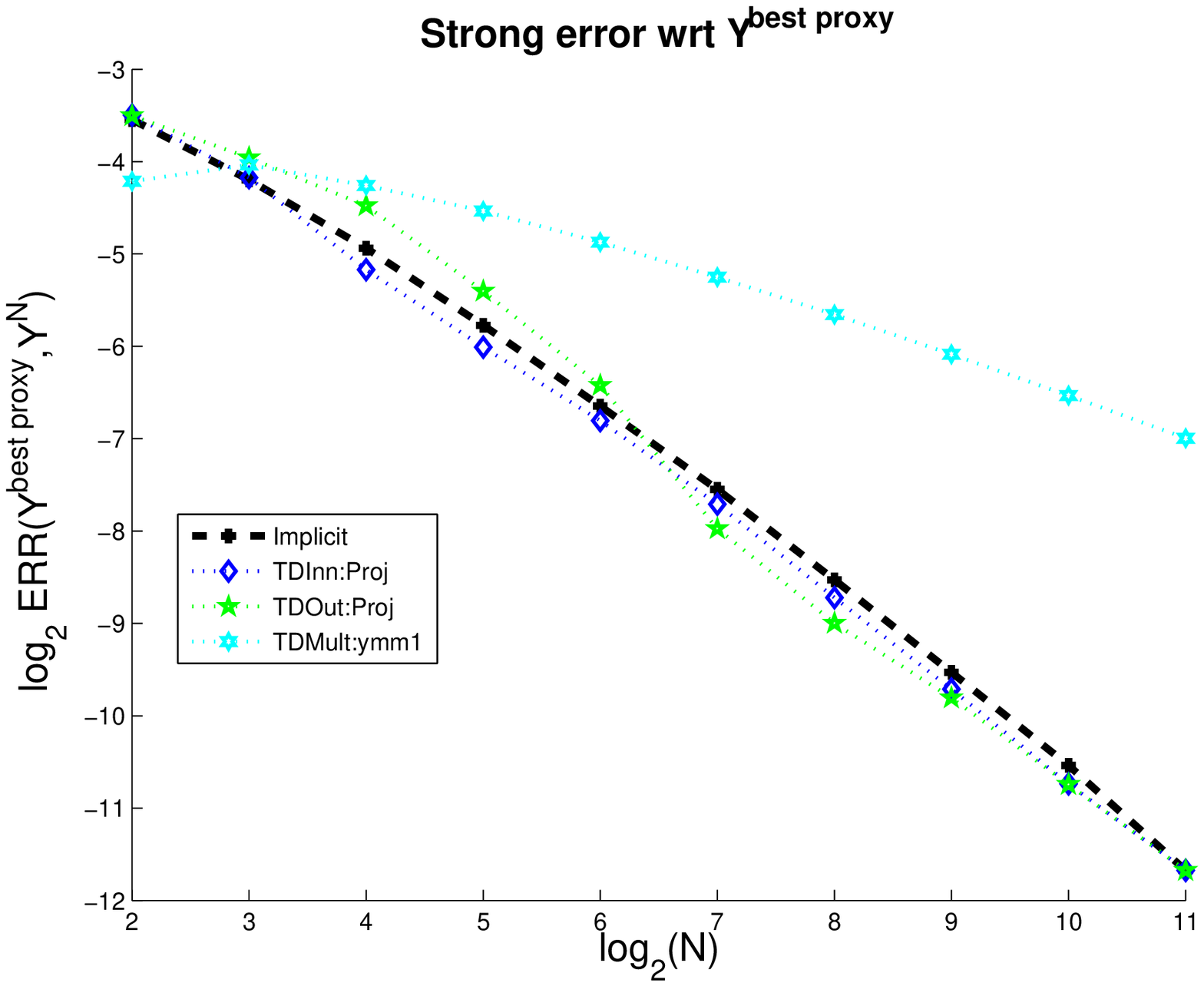}	}
	\subfigure
	{	\includegraphics[scale=0.35]{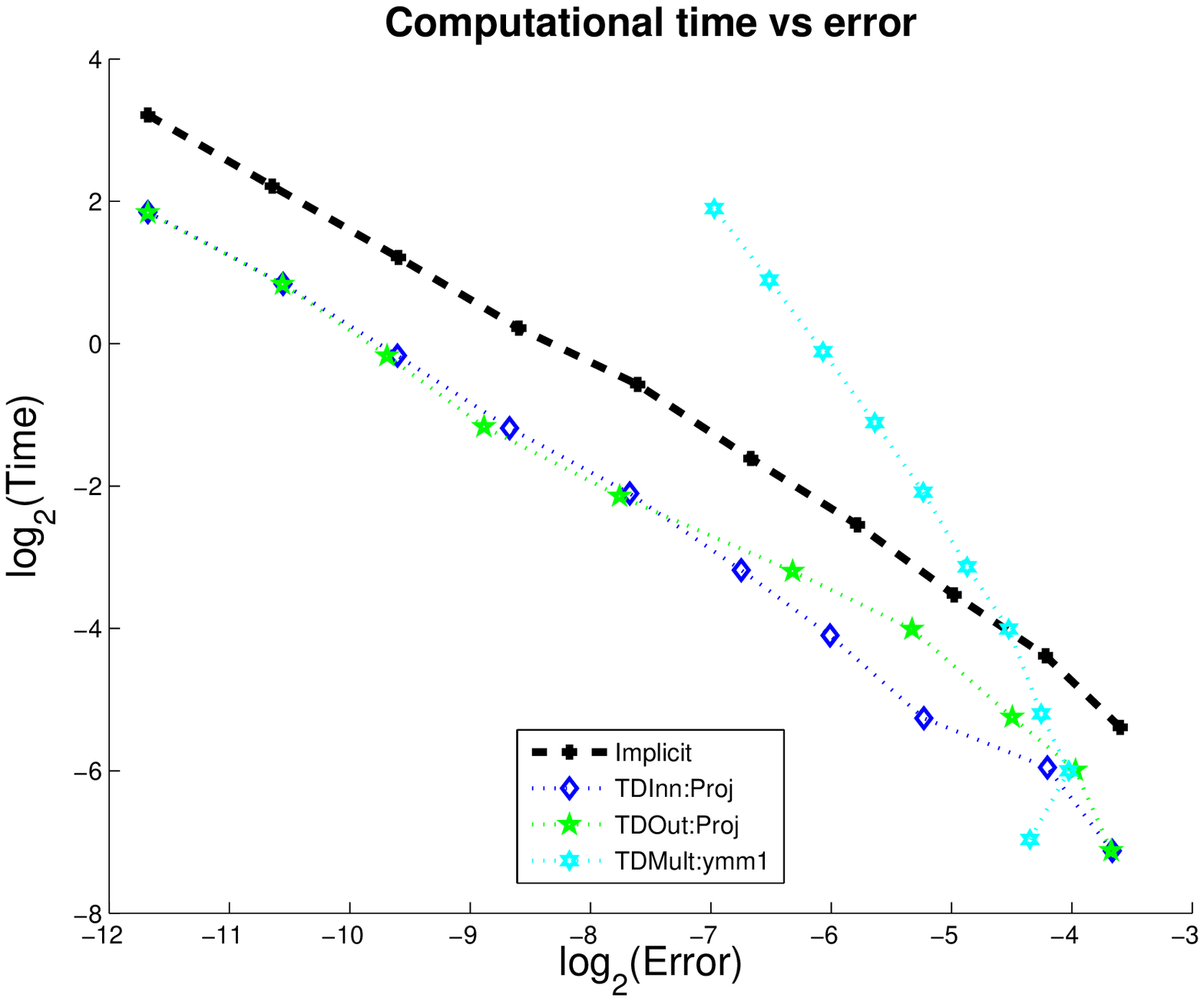}	}
	\caption{ Errors computed for $N \in \{4,8,16, \ldots, 2048 \}$, with {$M=100$k} and $K=10$. }
\end{figure}

We observe that the modified explicit schemes 2 and 3 provide errors comparable to the implicit scheme. However, as they are of explicit type, they benefit from a lower computation time. Scheme 4 however does not perform as well. 
We took $\alpha = \half$, as suggested by Appendix \ref{appendix:VerifyingExamples} so that the scheme fits in our framework. Then, Theorem \ref{theorem---main--convergence.of.tamed.scheme} guarantees convergence but since $\mu = \alpha < 1$ the modified scheme 4 has a convergence rate lower than the usual rate.
Essentially, what slows down the multiplicative schemes with $\chi^h(y)$ of the form $1/(1+F(y) \tilde{R}(h)^{-1})$ is that even when $F(y)$ (which can be $\abs{f(y)}$, $\abs{y}^m$, $\abs{y}^{m-1}$, ...) is small, i.e. $F(y) \le \tilde{R}(h)$, one does not have $f^h(y) = f(y)$. This creates an error compared to the true dynamics which is not necessary (since $f(y)$ is not big) and that error vanishes too slowly.

\subsection{Numerical stability : preservation of positivity}

We now look at the qualitative behavior of the modified explicit schemes used above. 

\paragraph*{} 
We consider the following FBSDE, with $T=1$. The forward component $X$ in \eqref{canonicSDE} is a Brownian motion started at $x=0$, with drift $b=0$ and diffusion coefficient $\sigma=1.25$. The BSDE \eqref{canonicBSDE} has driver $f(t,y,z)=f(y) = -y^2$, which is monotone decreasing on the domain $D=[0,+\infty[$, and terminal condition $\xi=g(X_T)$ where $g(x)=x^2$ is positive. 

The solution of the continuous-time BSDE remains positive (i.e. in the domain $D$) and we have proven in Section \ref{section---qualitative.properties} that the modified explicit schemes should reproduce this property, at least under certain sufficient conditions ---Assumption \ref{assump:forpositivity} reduces in our case to $h \, L^h_y < 1$. 
Figure \ref{Figure---preservation.of.bounds} shows the empirical maximum and minimum of $Y^N_i$, as $\ti$ goes from $T$ to $0$. 

\begin{figure}[htb]
	\label{Figure---preservation.of.bounds}
	\centering
	{	\includegraphics[scale=0.75]{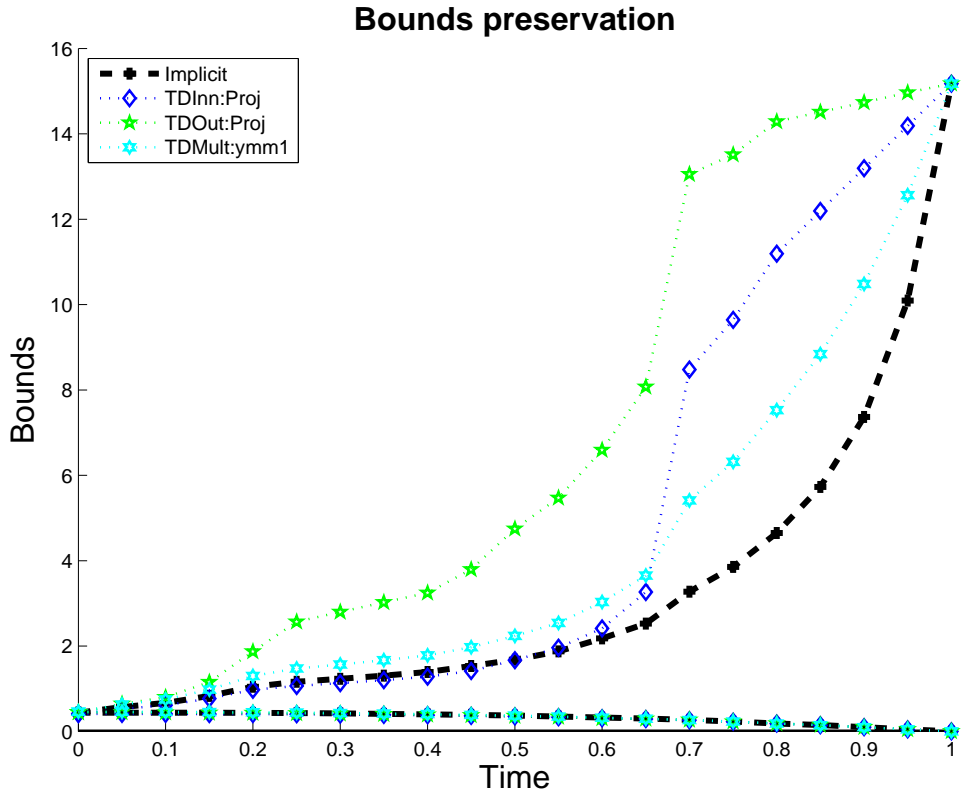}	}
	\caption{ Computed with $N=10$, {$M=50$k} and $K=30$.}
\end{figure}

We observe the desired preservation of positivity, and note that $N$ is only equal to $10$. 
We also observe, when looking at the upper bounds, a regular decay on the implicit scheme. This is due to the strict monotonicity of the driver $f$. The modified explicit schemes appear to preserve this qualitative behavior as well, though in a limited way. 

It must be stressed that such a numerical stability properties, the preservation of positivity and monotonicity, can fail due to an imperfect approximation of the conditional expectations. Our results in Section \ref{section---qualitative.properties} hold only for the time-discretization scheme and rely on the order-preserving property of the conditional expectation operators. In practice, when we tried to approximate the conditional expectations with $K=4$ functions, we found that frequently (the numerical method we used is a Monte-Carlo one) positivity was violated at some of the grid-points. Due to the taming of the driver, explosion was prevented, but the qualitative behavior is not satisfactory if the conditional expectations are poorly approximated.

\appendix 

\section{Auxiliary results}
\label{appendix:SideResults}

\subsection{Background results on monotone BSDE with polynomial growth}
\label{appendix-BackgroundResults}

The results stated in this section hold under the assumption listed in Section \ref{subsection-AnalyticAssumption}. They can be found in \cite{LionnetReisSzpruch2015}*{Sections 2 and 3} and are slightly adapted to suit the framework in the main body of the present work.

\begin{theorem}[Existence and uniqueness]
\label{theo:BasicExistandUniqTheo}
	The FBSDE \eqref{canonicSDE}-\eqref{canonicBSDE} has a unique solution $(X,Y,Z)\in \cS^p\times \cS^p \times \cH^p$ for any $p\geq 2$. Moreover, it holds that 
		\begin{align*}
			\|Y\|_{\cS^{p}}^{p}
			+\| Z \|_{\cH^{p}}^{p} 
			& \leq C_p \big\{ 
			\|g(X_T)\|_{L^{p}}^{p}
			+ \| f(\cdot,X_\cdot,0,0) \|_{\cH^{p}}^{p} \big\}
		 \end{align*}
\end{theorem}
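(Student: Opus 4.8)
The plan is to reduce the system to a decoupled backward equation and then invoke the now-classical theory of monotone BSDEs with polynomial growth (Pardoux, Briand--Carmona), the key point being that \hfmongrowth converts the superlinear growth of $f$ in $y$ into an exploitable quadratic bound. First I would dispatch the forward equation \eqref{canonicSDE}: since $b,\sigma$ are Lipschitz and of linear growth (and $\half$-H\"older in time), a standard Picard/fixed-point argument yields a unique strong solution $X \in \cS^p$ for every $p \ge 2$, together with the moment bounds $\bE[\sup_{t}|X_t|^p] \le C_p(1+|x|^p)$. Because $g$ is Lipschitz, the terminal datum $\xi=g(X_T)$ then lies in $L^p$ for all $p\ge 2$; and since the driver here depends only on $(t,y,z)$, the backward equation \eqref{canonicBSDE} is genuinely decoupled, so it remains to solve a BSDE with $L^p$ (all $p$) terminal condition and a monotone, polynomially-growing generator.

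The heart of the matter is the a priori estimate, which simultaneously yields the quantitative bound in the statement. Applying It\^o's formula to $|Y_t|^2$ and using \hfmongrowth with $\bar M_z$ chosen strictly below $\half$ (legitimate by the Remark, since $\bar M_z$ can be taken arbitrarily small), the $|Z|^2$-term produced by the quadratic variation can be absorbed, leaving
\begin{align*}
|Y_t|^2 + (1-2\bar M_z)\int_t^T |Z_s|^2 \uds \le |\xi|^2 + 2\bar M_t (T-t) + 2\bar M_y \int_t^T |Y_s|^2 \uds - 2\int_t^T \scalar{Y_s}{Z_s \udws}.
\end{align*}
Taking conditional expectations kills the martingale term and Gronwall's lemma controls $\bE_t[|Y_t|^2]$; the full $\cS^p$ and $\cH^p$ bounds follow from the same computation carried out on $|Y_t|^p$ together with the Burkholder--Davis--Gundy inequality. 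Crucially, the polynomial growth of degree $m$ never enters these estimates directly, as \hfmongrowth has already tamed it into quadratic form, which is exactly why the final bound depends only on $g(X_T)$ and $f(\cdot,0,0)$.

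For existence I would proceed by truncation. I would approximate $f$ by a sequence $f_n$ that is globally Lipschitz in $y$, preserves the monotonicity constant $M_y$ and the growth control, and converges to $f$ locally uniformly --- for instance by first splitting off the Lipschitz-in-$z$ part via $f(t,y,z)=[f(t,y,z)-f(t,y,0)]+f(t,y,0)$ and then composing the $y$-argument of the monotone part with the projection onto the centred ball of radius $n$ (which is $1$-Lipschitz and monotone). Each truncated equation is solvable by the classical Lipschitz (Pardoux--Peng) theory, producing $(Y^n,Z^n)$, and the a priori estimates above hold \emph{uniformly in $n$}. I would then show that $(Y^n,Z^n)_n$ is Cauchy in $\cS^2\times\cH^2$ by applying It\^o to $|Y^n-Y^{n'}|^2$ and invoking \hfmon and \hfreg, pass to the limit $(Y,Z)$, and identify the nonlinear term. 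Uniqueness is the very same It\^o-plus-Gronwall argument applied to the difference of two solutions, using \hfmon for the $y$-dependence and \hfreg for the $z$-dependence.

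The main obstacle is precisely the limit passage in this truncation step: because the generator is only locally Lipschitz with polynomial growth in $y$ (cf.\ \hfregY, \hfgrowth), the convergence of $f_n(t,Y^n_t,Z^n_t)$ to $f(t,Y_t,Z_t)$ in $\cH^2$ is not automatic and must be secured through uniform higher-moment estimates --- available for every $p$ thanks to $\xi\in L^p$ for all $p$ --- which give the uniform integrability needed to pass to the limit in the nonlinearity. Monotonicity is what renders all the a priori bounds independent of the exploding Lipschitz and growth constants of the approximations $f_n$, and is therefore the structural ingredient on which the entire argument rests.
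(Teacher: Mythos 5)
The paper does not actually prove this statement: Theorem \ref{theo:BasicExistandUniqTheo} is presented in Appendix \ref{appendix-BackgroundResults} as a background result quoted from \cite{LionnetReisSzpruch2015}*{Sections 2 and 3}, which in turn rests on the classical theory of monotone BSDEs with polynomial growth (Pardoux; Briand--Carmona; the $L^p$-solutions literature). Your sketch reconstructs precisely that classical argument --- forward SDE by Picard iteration, decoupling, a priori estimates from It\^o's formula plus \hfmongrowth with $\bar{M}_z<\half$, existence by Lipschitz approximation with estimates uniform in the approximation, uniqueness by It\^o plus Gronwall using \hfmon and \hfreg --- so it is the right proof of the right statement, and you correctly isolate the two genuinely delicate points: absorbing the quadratic-variation term, and passing to the limit in the nonlinearity via uniform higher moments, which are available because $\xi\in L^p$ for every $p$.

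One concrete caveat about your chosen approximation. In dimension $n>1$, composing the $y$-argument of $f$ with the projection $\pi_n$ onto the centred ball does \emph{not} obviously preserve the one-sided Lipschitz condition: monotonicity of the projection controls $\scalar{\pi_n(y')-\pi_n(y)}{f(t,\pi_n(y'),z)-f(t,\pi_n(y),z)}$, but the inner product you need is taken against $y'-y$, and the two differ off the ball. The paper itself runs into exactly this issue in Appendix \ref{appendix:VerifyingExamples} when verifying \htmon for the inner taming by projection, where only a remainder estimate $\Rmon$ is obtained rather than exact preservation of $M_y$. For the existence proof this is repairable --- restrict to $n=1$, use a convolution or inf-convolution regularisation of the monotone part as in Pardoux's original construction, or carry the remainder through the Cauchy estimate --- but as written, the claim that your $f_n$ ``preserves the monotonicity constant $M_y$'' is the one step of the sketch that does not go through verbatim.
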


Take the uniform partition $\pi = (\ti)_{i=0, \ldots, N}$ of $[0,T]$ as defined in Section \ref{sec--AssumptionSection} and with mesh size $|\pi|=h=T/N$. 
Define the random variables $(\overline{Z}_\ti)_{i=0,\cdots,N-1}$ by
			\begin{align*}
				\overline{Z}_\ti := \frac1h \bE_i\Big[  \int_\ti^\tip Z_u \udu \Big].
			\end{align*}
We have then the following result (\cite{LionnetReisSzpruch2015}*{Theorem 3.5 and Corollary 3.6})

	\begin{theorem}[Integrability and Path regularity]		
	\label{theorem---path.regularity}
		For any $p\geq 2$ there exists a positive constant $C$ independent of $h$ such that: 
			\begin{align*} 
				\sup_{\ti\in\pi} \bE\big[ \abs{Y_\ti}^{p} \big] \le C
				\quad \text{and}\quad
				\mathrm{REG}_{Y,p}(h) := \sup_{\abs{s-t}\le h;\ t,s\in[0,T]} \bE\big[ \Abs{Y_s - Y_t}^p \big]\leq C h^{\frac p2}	,
			\end{align*}
		 moreover, for the control component $Z$ we have 
			\begin{align*}
				\sum_{i=0}^{N-1} \bE\Big[ \big( \abs{\overline{Z}_\ti}^2 h\big)^{\frac p2} \Big] \le C
				\quad \text{and}\quad
				\sup_{\ti\in\pi\cap[0,T)} \bE\big[\, \abs{ \overline Z_\ti}^{p} \big] \le C
			\end{align*}
		and the respective path regularity result
			\begin{align*}
				\mathrm{REG}_{Z,2}(h) :=\bE\Big[\, \sum_{i=0}^{N-1} \int_\ti^\tip \abs{Z_t - \overline{Z}_\ti}^2 \udt\, \Big] \le C h.
			\end{align*}
	\end{theorem}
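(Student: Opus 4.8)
The plan is to treat the five assertions in increasing order of difficulty, drawing on Theorem~\ref{theo:BasicExistandUniqTheo} (which gives $Y\in\cS^p$ and $Z\in\cH^p$ for every $p\ge 2$) and on the pointwise bound $\abs{Z_t}\le C(1+\abs{X_t})$ recalled from \cite{LionnetReisSzpruch2015}. The first moment bound is immediate, since $\bE[\abs{Y_\ti}^p]\le\bE[\sup_t\abs{Y_t}^p]=\norm{Y}_{\cS^p}^p\le C$. For the pointwise bound on $\overline{Z}_\ti$ I would apply conditional Jensen with the normalised measure $h^{-1}\udu$ on $[\ti,\tip]$ to get $\abs{\overline{Z}_\ti}^p\le\bE_i[h^{-1}\int_\ti^\tip\abs{Z_u}^p\,\udu]$ and then insert $\abs{Z_u}\le C(1+\abs{X_u})$, so that $\bE[\abs{\overline{Z}_\ti}^p]\le C(1+\bE[\sup_u\abs{X_u}^p])\le C$ uniformly in $i$. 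The summed bound follows from the variational characterisation of $\overline{Z}_\ti$: since $t\mapsto t^{p/2}$ is convex and superadditive for $p\ge2$, conditional Jensen together with $\sum_i a_i^{p/2}\le(\sum_i a_i)^{p/2}$ reduce $\sum_i\bE[(\abs{\overline{Z}_\ti}^2 h)^{p/2}]$ to $\bE[(\int_0^T\abs{Z_u}^2\,\udu)^{p/2}]=\norm{Z}_{\cH^p}^p\le C$.

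For the $L^p$ path regularity of $Y$, I would use the BSDE dynamics over an interval $[t,s]$ with $\abs{s-t}\le h$, writing $Y_t-Y_s=\int_t^s f(r,Y_r,Z_r)\,\udr-\int_t^s Z_r\,\udwr$ and splitting into the two terms. The drift is bounded via $\abs{\int_t^s f\,\udr}^p\le h^{p-1}\int_t^s\abs{f}^p\,\udr$; invoking \hfgrowth together with the uniform estimates $\bE[\abs{Y_r}^{pm}]\le C$ and $\bE[\abs{Z_r}^p]\le C$ (the latter from the pointwise $Z$-bound) gives a contribution of order $h^p$. The martingale term is controlled by the Burkholder--Davis--Gundy inequality and H\"older: $\bE[\abs{\int_t^s Z_r\,\udwr}^p]\le C\,h^{p/2-1}\int_t^s\bE[\abs{Z_r}^p]\,\udr\le C\,h^{p/2}$. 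Collecting the two and using $h\le T$ yields $\mathrm{REG}_{Y,p}(h)\le C\,h^{p/2}$.

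The genuine difficulty is the $Z$ path regularity $\mathrm{REG}_{Z,2}(h)\le C\,h$, which is the monotone-driver analogue of Zhang's theorem. The starting point is that $\overline{Z}_\ti$ is the $\F_\ti$-measurable minimiser of $\zeta\mapsto\bE[\int_\ti^\tip\abs{Z_u-\zeta}^2\,\udu]$, so it suffices to bound $\sum_i\bE[\int_\ti^\tip\abs{Z_u-\zeta_i}^2\,\udu]$ for any convenient adapted proxy $\zeta_i$. The crux is therefore to establish an averaged $L^2$-H\"older-$\tfrac12$ estimate $\bE[\abs{Z_s-Z_t}^2]\le C\abs{s-t}$ for a suitable version of $Z$, after which choosing $\zeta_i=Z_\ti$ gives $\sum_i\int_\ti^\tip C(u-\ti)\,\udu\le C\,N\,h^2=C\,T\,h$. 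To obtain this H\"older estimate in the non-Lipschitz (monotone, polynomial-growth) setting I would use Malliavin calculus: the pair $(D_\theta Y_t,D_\theta Z_t)$ solves a linear BSDE whose coefficients are governed by the derivatives of $f$ (locally Lipschitz, but controlled through \hfregY and \hfreg), one identifies the trace $Z_t=D_t Y_t$, and one then transfers the $L^p$-H\"older-$\tfrac12$ regularity of the forward flow $X$ to $Z$ via uniform moment bounds on $D_\theta Y$. This Malliavin step --- taming the linearised BSDE despite the polynomial growth of $\partial_y f$ --- is the main obstacle, and it is precisely where the a priori estimates of Theorem~\ref{theo:BasicExistandUniqTheo} and the pointwise $Z$-bound are indispensable; the remaining assembly is routine.
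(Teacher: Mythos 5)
This theorem is not proved in the paper at all: it is a background result imported verbatim by citation from \cite{LionnetReisSzpruch2015}*{Theorem 3.5 and Corollary 3.6}, so there is no in-paper argument to compare against. Your reconstruction follows the same route as that reference (elementary moment and Jensen arguments for the four integrability statements and for $\mathrm{REG}_{Y,p}$, then a Zhang-type $L^2$-projection argument reducing $\mathrm{REG}_{Z,2}$ to an averaged H\"older-$\tfrac12$ estimate for $Z$ obtained via Malliavin calculus and the identification $Z_t=D_tY_t$), and the first four points are complete as written. The one caveat is that your final paragraph is a plan rather than a proof: under the standing assumptions here $f$ is only locally Lipschitz in $y$ (\hfregY), not differentiable, so the Malliavin/linearized-BSDE step requires mollifying the driver, proving the H\"older estimate uniformly in the mollification parameter, and passing to the limit by stability --- this is precisely the content of the cited proof and is not ``routine assembly.''
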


\subsection{Proof of Lemma \ref{lem:mrt} (discrete-time martingale representation)}
\label{appendix-proof-discrete-MRT}

\begin{proof}[Proof of Lemma \ref{lem:mrt}]
Equation \eqref{equation---reference--definition.of.Yi.with.martingale} clearly defines $\Delta M_ip$ as being $\cY_\ip + f^h(\cY_\ip,\cZ_i)h - \bE_i[\cY_\ip + f^h(\cY_\ip,\cZ_i)h]$. 
Conversely, defining $\Delta M_\ip$ clearly yields a martingale increment: $\bE_i[\Delta M_\ip] = 0$.
So we now want to prove the existence and uniqueness of the decomposition $(\zeta_i,\Delta N_\ip)$ of the martingale increment $\Delta M_\ip$.

\textit{Uniqueness.} 
Let $(\zeta_i,\Delta N_\ip)$ be such a decomposition. That is, we have $\zeta_i$ a $\F_i$-measurable r.v. and $\Delta N_\ip$ a martingale increment orthogonal to $H_\ip h$ (in other words $\bE_i[\Delta N_\ip]= 0$ and $\bE_i[\Delta N_\ip (H_\ip h)^*]=0$), satisfying the decomposition \eqref{equation---reference--decomposition.of.martingale.increment}.
Multiplying the equation \eqref{equation---reference--decomposition.of.martingale.increment} by $H_\ip^*$ on the right, taking conditional expectation, using the orthogonality between $H_\ip^*$ and $\Delta N_\ip$, and recalling \hH.2 implies
	\begin{align*}
		\bE_i\Big[ \Delta M_\ip H_\ip^* \Big] 
			&= \bE_i\Big[ \zeta_i \Lambda^{-1}  (H_\ip h) (H_\ip^*) \Big] + \bE_i\Big[ \Delta N_\ip H_\ip^* \Big]						\\
		 	&= \zeta_i \Lambda^{-1} \ h^{-1} \bE_i\Big[  (H_\ip h) (H_\ip h)^* \Big]	+ 0
			= \zeta_i ,
	\end{align*}
which yields the uniqueness of $\zeta_i$.
On the other hand \eqref{equation---reference--decomposition.of.martingale.increment} directly implies 
	\begin{align*}
		\Delta N_\ip = \Delta M_\ip - \zeta_i \Lambda^{-1} H_\ip h \ ,
	\end{align*} 
which yields the uniqueness of $\Delta N_\ip$.

\textit{Existence.} 
Define
	\begin{align*}
		\zeta_i = \bE_i\Big[ \Delta M_\ip H_\ip^* \Big] 
		\quad \text{and} \quad 
		\Delta N_\ip = \Delta M_\ip - \zeta_i \Lambda^{-1} H_\ip h \ .
	\end{align*}
It is then obvious that we have $\Delta M_\ip = \zeta_i \Lambda^{-1} H_\ip h + \Delta N_\ip$, and that $\zeta_i$ is $\F_i$-measurable. 
It remains to check that $\Delta N_\ip$ is a martingale increment and that it is orthogonal to $H_\ip h$.  
The first point follows easily from the fact that $\Delta M_\ip$ and $H_\ip$ are martingale increments and that $\zeta_i$ is $\F_i$-measurable:
	\begin{align*}
		\bE_i\big[ \Delta N_\ip \big] 
			= \bE_i\big[ \Delta M_\ip - \zeta_i \Lambda^{-1} H_\ip h \big] 
			= \bE_i\big[ \Delta M_\ip \big] - \zeta_i \Lambda^{-1} \bE_i\big[ H_\ip \big] h 
			= 0 \ .
	\end{align*}
The second point follows by computing, using the definition of $\zeta_i$,
	\begin{align*}
		\bE_i\Big[ \Delta N_\ip (H_\ip h)^* \Big]
			&= \bE_i\bigg[ \Big( \Delta M_\ip - \zeta_i \Lambda^{-1} H_\ip h \Big) (H_\ip h)^* \bigg]																						\\
			&= \bE_i\Big[ \Delta M_\ip H_\ip^* \Big] h - \zeta_i \Lambda^{-1} \ \Lambda h I_d
			= \bE_i\Big[ \Delta M_\ip H_\ip^* \Big] h - \zeta_i h 	= 0 .
	\end{align*}
Therefore, the pair $(\zeta_i,\Delta N_\ip)$ so-defined is a solution, which proves existence.

To conclude, we just rewrite 
	\begin{align*}
		\zeta_i 
			&= \bE_i\Big[ \Delta M_\ip H_\ip^* \Big]
					= \bE_i\Big[ \big( \cY_\ip + f^h(\cY_\ip,\cZ_i)h - \bE_i[\cY_\ip + f^h(\cY_\ip,\cZ_i)h] \big) H_\ip^* \Big]														\\
			&= \bE_i\Big[ \big( \cY_\ip + f^h(\cY_\ip,\cZ_i)h \big) H_\ip^* \Big] - \bE_i[\cY_\ip + f^h(\cY_\ip,\cZ_i)h] \ \bE_i\Big[ H_\ip^* \Big]						\\
			&= \bE_i\Big[ \big( \cY_\ip + f^h(\cY_\ip,\cZ_i)h \big) H_\ip^* \Big] .
	\end{align*}
\end{proof}

\subsection{Iteration and Fundamental Lemma}		\label{sec--proof.of.lemmas---iteration.and.fundamental}

We first state a particularly useful ``iteration lemma''.

	\begin{lemma}		\label{lemma---iterating.a.one.step.estimate}
		 Let $(a_i)$, $(b_i)$, $(c_i)$, $i \in \{0, \ldots, N\}$, be sequences of positive numbers. 
		Assume that, there exist constants $c \ge 0$ and $h>0$, such that, for all $i \in \{0, \ldots, N-1\}$,
			\begin{align}		\label{eq:rec}
				 a_i + b_i \le (1+ch)a_{i+1} + c_i .
			\end{align}
		Then, for all $i$, 
		\begin{align*}
			a_i  + \sum_{j=i}^{N-1} b_j  \le  e^{c(N-i)h} a_N + \sum_{j=i}^{N-1} e^{c(j-i)h} c_j \le e^{c(N-i)h} a_N + e^{c(N-1-i)h} \sum_{j=i}^{N-1} c_j.
		\end{align*}
	\end{lemma}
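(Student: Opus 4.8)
The plan is to establish the stronger weighted inequality
\begin{align*}
	a_i + \sum_{j=i}^{N-1} \lambda^{j-i} b_j \le \lambda^{N-i} a_N + \sum_{j=i}^{N-1} \lambda^{j-i} c_j, \qquad \lambda := 1+ch,
\end{align*}
and then deduce both displayed bounds from it. First I would rescale \eqref{eq:rec} so that the $a$-terms telescope: multiplying the inequality at index $j$ by the weight $\lambda^{j-i}$ and writing $\gamma_j := \lambda^{j-i} a_j$, the key point is that $\lambda^{j-i}\cdot \lambda\, a_{j+1} = \lambda^{(j+1)-i} a_{j+1} = \gamma_{j+1}$, so \eqref{eq:rec} becomes
\begin{align*}
	\gamma_j - \gamma_{j+1} + \lambda^{j-i} b_j \le \lambda^{j-i} c_j.
\end{align*}

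Next I would sum this over $j = i, \ldots, N-1$. The telescoping collapses $\sum_{j=i}^{N-1}(\gamma_j - \gamma_{j+1})$ to $\gamma_i - \gamma_N = a_i - \lambda^{N-i} a_N$, which rearranges to exactly the weighted inequality above. To recover the statement I would use $c \ge 0$ and $h>0$, so that $\lambda \ge 1$ and hence $b_j \le \lambda^{j-i} b_j$ for $j \ge i$; this lets the weighted $b$-sum dominate $\sum_{j=i}^{N-1} b_j$. Converting the powers of $\lambda$ via $1+ch \le e^{ch}$, so that $\lambda^{k} \le e^{ckh}$, then yields the first claimed bound, and $e^{c(j-i)h} \le e^{c(N-1-i)h}$ for $j \le N-1$ gives the coarser second one.

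The one genuinely delicate step is the choice of weight. The factor $\lambda^{j-i}$ is dictated by the requirement $w_j \lambda = w_{j+1}$, which is precisely what absorbs the growth factor $\lambda$ sitting in front of $a_{j+1}$ and makes the $a$-terms telescope cleanly. A naive normalization (weighting relative to $N$, or by $\lambda^{-(j-i)}$, or simply iterating after discarding the $b_j$) leaves residual terms of the form $ch\,a_k$ that then have to be re-estimated through the $a_i$-bound, producing a correct but considerably messier argument. An alternative route is a backward induction on $i$ proving the weighted inequality directly, but the telescoping computation is the most transparent and I would favour it.
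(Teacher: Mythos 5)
Your proof is correct and is essentially the paper's argument: the paper proves the same weighted estimate by backward induction, multiplying the inductive hypothesis by $e^{ch}$ at each step and using positivity of the $b_j$ to absorb the extra factor, which is exactly your telescoping sum unrolled one step at a time (with weight $e^{ch}$ in place of $\lambda=1+ch$, converted at the end as you do).
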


	\begin{proof}
		The first estimate is clearly true for $i=N-1$ (even for $i=N$ in fact), since $1+ch \le e^{ch}$. 
		Then, for any given $i \le N-2$, if it is true for $i+1$, by multiplying both sides by $e^{ch}$ we find that
		\begin{align*}
			e^{ch} a_\ip + e^{ch} \sum_{j=i+1}^{N-1} b_j \le e^{c(N-i)h} a_N +
		\sum_{j=i+1}^{N-1} e^{c(j-i)h} c_j
		\end{align*}
		Summing this inequality with \eqref{eq:rec} and noting that 
		$\sum_{j=i+1}^{N-1} b_j \le e^{ch} \sum_{j=i+1}^{N-1} b_j$ due to the positivity of the
		$b_j$ terms gives the first estimate for the given $i$.
		The second follows from the fact that
		$\sum_{j=i}^{N-1} e^{c(j-i)h} c_j \le e^{c(N-1-i)h} \sum_{j=i}^{N-1} c_j$ since the $c_i$'s are positive.
	\end{proof}

We now prove the Fundamental Lemma. 

	\begin{proof}[Proof of Lemma \ref{lem:fundamental}]
		We write
			\begin{align*}
				\bE\big[ \abs{Y_\ti - Y_i}^2 \big]
						& =  \bE\big[ \abs{Y_\ti -\widehat{Y}_i + \widehat{Y}_i - Y_i }^2 \big]
						 \le \Big(1+\frac{1}{h}\Big) \bE\big[ \abs{Y_\ti -\widehat{Y}_i}^2 \big] + (1+h) \bE\big[ \abs{\widehat{Y}_i - Y_i }^2 \big]							\\
				\bE\big[ \abs{\overline{Z}_\ti - Z_i}^2 \big]
						& =  \bE\big[ \abs{\overline{Z}_\ti -\widehat{Z}_i + \widehat{Z}_i - Z_i }^2 \big]
						 \le 2 \bE\big[ \abs{\overline{Z}_\ti -\widehat{Z}_i}^2 \big] + 2 \bE\big[ \abs{\widehat{Z}_i - Z_i }^2 \big].
			\end{align*}
		Since the scheme is almost-stable, we have
			\begin{align*}
				\bE\big[ &\abs{Y_\ti - Y_i}^2 \big] + \frac{1}{8} \bE\big[ \abs{\overline{Z}_\ti - Z_i}^2 \big]	 h																				\\
					&\le \Big(1+\frac{1}{h}\Big) \bE\big[ \abs{Y_\ti -\widehat{Y}_i}^2 \big] + (1+h) \bE\big[ \abs{\widehat{Y}_i - Y_i }^2 \big]
									+ \frac{1}{4} \bE\big[ \abs{\overline{Z}_\ti -\widehat{Z}_i}^2 \big] h + \frac{1}{4} \bE\big[ \abs{\widehat{Z}_i - Z_i }^2 \big] h			\\
					&\le (1+h) \Big( \bE\big[ \abs{\widehat{Y}_i - Y_i}^2 \big] + \frac{1}{4} \bE\big[ \abs{\widehat{Z}_i - Z_i}^2 \big] h \Big)
									+ \Big(1+\frac{1}{h}\Big) \tau_i(Y) + \frac{1}{4} \tau_i(Z)																													\\
					&\le (1+h) (1+ch) \bE\big[ \abs{Y_\tip - Y_\ip}^2 \big] + (1+h)C h^{\mu+1} 
									+ \Big(1+\frac{1}{h}\Big) \tau_i(Y) + \frac{1}{4} \tau_i(Z)																													\\
					&\le (1+ch) \bE\big[ \abs{Y_\tip - Y_\ip}^2 \big]
									+ C \Big( \frac{\tau_i(Y)}{h} + \tau_i(Z) \Big) + C h^{\mu+1} .
			\end{align*}		
		where the constants $c$ and $C$ changed on the last line and we used $h \le T$.
		Applying Lemma \ref{lemma---iterating.a.one.step.estimate} then gives, since $\ti = ih$ and $h=\frac{T}{N}$,
			\begin{align*}
				\bE\big[ &\abs{Y_\ti - Y_i}^2 \big] + \frac{1}{8} \sum_{j=i}^{N-1} \bE\big[ \abs{\overline{Z}_\tj - Z_j}^2 \big] h													\\
					&\le e^{c(T-\ti)} \Bigg( \bE\big[ \abs{\xi - \xi^N}^2 \big] 
								+ \sum_{j=i}^{N-1} C \Big( \frac{\tau_i(Y)}{h} + \tau_i(Z) \Big) + \sum_{j=i}^{N-1} C h^{\mu+1}	\Bigg)												\\
					&\le C \bE\big[ \abs{\xi - \xi^N}^2 \big] + C \bigg( \sum_{j=i}^{N-1} \frac{\tau_i(Y)}{h} + \tau_i(Z) \bigg) + C h^{\mu} .
			\end{align*}
	\end{proof}

\subsection{Proof of Proposition \ref{proposition---moment.estimate} (moment bounds for the scheme)}
\label{sec--proof.of.proposition---Prop:Integrability}

	\begin{proof}[Proof of Proposition \ref{proposition---moment.estimate}]
		Taking the power $p \ge 1$ in the estimate of Proposition \ref{prop:aspathestimate}, using $(a+b)^p \le 2^{p-1}(a^p + b^p)$,
		we have
			\begin{align*}
				\abs{Y_i}^{2p} 
					&\le \Big( e^{cT} \bE_i[\abs{\xi^N}^2] + e^{cT} C T \Big)^p									
					\le 2^{p-1} e^{pcT} \bE_i[\abs{\xi^N}^2]^p + 2^{p-1} e^{pcT} (CT)^p .
			\end{align*}		
		Using the Jensen inequality and taking the expectation we therefore have
			\begin{align*}
				\bE[\abs{Y_i}^{2p}] \le 2^{p-1} e^{pcT} \bE[\abs{\xi^N}^{2p}] + 2^{p-1} e^{pcT} (CT)^p .
			\end{align*}		
		Given the moment assumption in \hxiN, this proves the first estimate.
		For the second, we come back to the one-step estimate of Proposition \ref{proposition---one-step.size.estimate}, take the power $p \ge 1$, 
		use $a^p + b^p \le (a+b)^p$ and use the Jensen inequality to write
			\begin{align*}
				\abs{Y_i}^{2p} + \Big(\frac{d}{4}\Big)^p \big( \abs{Z_i}^2 h \big)^p 
					&\le \left( e^{ch} \bE_i[\abs{Y_\ip}^2] + Ch \right)^p																														\\
					& =  e^{pch}\bE_i[\abs{Y_\ip}^2]^p + \sum_{k=1}^p \matrix{p\\k} \Big( e^{ch}\bE_i[\abs{Y_\ip}^2] \Big)^{p-k} (Ch)^k						\\
					&\le e^{pch}\bE_i[\abs{Y_\ip}^{2p}] + \sum_{k=1}^p \matrix{p\\k} \Big( e^{ch}\bE_i[\abs{Y_\ip}^2] \Big)^{p-k} (Ch)^k .
			\end{align*}
		Iterating this estimate (see Lemma \ref{lemma---iterating.a.one.step.estimate}) we obtain
			\begin{align*}
				\Big(\frac{d}{4}\Big)^p \bE_i & \bigg[ \sum_{j=i}^{N-1} \big( \abs{Z_i}^2 h \big)^p \bigg]																												\\
					&\le e^{pc(N-i)h}\bE_i[\abs{Y_N}^{2p}] 																																			
								+ \sum_{j=i}^{N-1} e^{pc(j-i)h} \bE_i\bigg[ \sum_{k=1}^p \matrix{p\\k} \Big( e^{ch}\bE_j[\abs{Y_\jp}^2] \Big)^{p-k} (Ch)^k \bigg] 									\\
					&\le e^{pc(T-\ti)}\bE_i[\abs{\xi^N}^{2p}] 																																			
								+ e^{pc(N-1-i)h}  \sum_{j=i}^{N-1} \sum_{k=1}^p \matrix{p\\k} e^{c(p-k)h} \bE_i\Big[ \bE_j[\abs{Y_\jp}^2]^{p-k} \Big] (Ch)^k  .
			\end{align*}
		One can then use the H?Â¶lder inequality and the Jensen inequality to further obtain
			\begin{align*}
				\Big(\frac{d}{4}\Big)^p \bE_i & \bigg[ \sum_{j=i}^{N-1} \big( \abs{Z_i}^2 h \big)^p \bigg]																												\\
					&\le e^{pc(T-\ti)}\bE_i[\abs{\xi^N}^{2p}] 																																			
								+ e^{pc(N-1-i)h} e^{pch} \sum_{j=i}^{N-1} \sum_{k=1}^p \matrix{p\\k} \bE_i\Big[ \bE_j[\abs{Y_\jp}^2]^{p} \Big]^{\frac{p-k}{p}}  (Ch)^k  							\\
					&\le e^{pc(T-\ti)}\bE_i[\abs{\xi^N}^{2p}] 																																			
								+ e^{pc(T-\ti)} \sum_{j=i}^{N-1} \sum_{k=1}^p \matrix{p\\k} \bE_i\Big[ \bE_j[\abs{Y_\jp}^{2p}] \Big]^{\frac{p-k}{p}}  (Ch)^k  											\\
					& =  e^{pc(T-\ti)}\bE_i[\abs{\xi^N}^{2p}] 																																			
								+ e^{pc(T-\ti)} \sum_{k=1}^p \matrix{p\\k} \sum_{j=i}^{N-1} \bE_i\big[ \abs{Y_\jp}^{2p} \big]^{\frac{p-k}{p}}  (Ch)^k .
			\end{align*}
		In particular, for $i=0$, we obtain in the end
			\begin{align*}
				&
				\Big(\frac{d}{4}\Big)^p \bE  \bigg[ \sum_{j=0}^{N-1} \big( \abs{Z_i}^2 h \big)^p \bigg]
				\\
				& \qquad 
				\le e^{pcT}\bE[\abs{\xi^N}^{2p}]
					+ e^{pcT} \sum_{k=1}^p \matrix{p\\k} \Big(\sup_{0 \le j \le N-1} \bE\big[ \abs{Y_\jp}^{2p} \big] \Big)^{\frac{p-k}{p}}  C^k h^{k-1} T ,
			\end{align*}
		which, in view of the moment estimate just proved for $(Y_i)$, yields the desired result.
	\end{proof}

\subsection{Proof of Lemma \ref{lemma---vanishing.effect.of.taming.the.driver} (vanishing effect of the tamed drivers)} 
\label{sec--proof.of.lemma---vanishing.effect.of.taming.the.driver} 

	\begin{proof}[Proof of Lemma \ref{lemma---vanishing.effect.of.taming.the.driver}]
		We verify the claim of the lemma in each of the three cases covered by our assumption \htfcvg. 
		We only prove the second estimate, as it is clear from the proof how the first follows. 
		Recall that $R^h=f-f^h$, and that $C$ denotes a constant whose value may change from line to line.
		
		\textit{Case 1.} 
		We use the inequality $(\sum_{i=1}^n a_i)^2 \le n \sum_{i=1}^n a_i^2$ to write
			\begin{align*}
				\bE\Big[ \abs{R^h(Y_\tip,\overline{Z}_\ti)}^2 \Big]
					&\le C \bE\Big[ \big( 1 + \abs{Y_\tip}^{q} + \abs{\overline{Z}_\ti}^{p} \big)^2 \Big] h^{2\alpha}
					\le 
					C \bE\Big[ 1 + \abs{Y_\tip}^{2q} + \abs{\overline{Z}_\ti}^{2p} \Big] h^{2\alpha} .
			\end{align*}
		Hence, using the moment bounds from Theorem \ref{theorem---path.regularity}, 
			\begin{align*}
				\sum_{i=0}^{N-1} \bE\big[\, \abs{R^h(Y_\tip,\overline{Z}_\ti)}^2 \big]																																					
				& \le C h^{2\alpha-1} + C \sup_{i} \bE\big[\, \abs{Y_\tip}^{2q} \big] h^{2\alpha-1} + C \sup_{i} \bE\big[ \abs{\overline Z_\ti}^{2p} \big] h^{2\alpha-1}							\\
				&\le C h^{2\alpha-1} \le C ,
			\end{align*}
			since $\alpha \ge \half$ by assumption. 
			This proves the desired result for Case 1.

		\textit{Case 2.} 
		Using $(\sum_{i=1}^n a_i)^q \le C \sum_{i=1}^n a_i^q$, the Cauchy-Schwartz inequality, the Markov inequality with a power $l \ge 1$ yet to be determined, and \hfgrowth, we have
			\begin{align*}
        			\bE\Big[ \abs{R^h(Y_\tip,\overline{Z}_\ti)}^2 \Big]
        				&\le C \bE\Big[ \big( 1+ \abs{Y_\tip}^{2q} + \abs{\overline{Z}_\ti}^{2q} \big) \1_{\{\abs{f(Y_\tip,\overline{Z}_\ti) > r(h)}\}} \Big]																				\\ 
					&\le C \bE\Big[ 1 + \abs{Y_\tip}^{4q} + \abs{\overline{Z}_\ti}^{4p} \Big]^\half \Big( \bE\big[ \abs{f(Y_\tip,\overline{Z}_\ti)}^l \big] r(h)^{-l} \Big)^\half											\\
					&\le C \bE\Big[ 1 + \abs{Y_\tip}^{4q} + \abs{\overline{Z}_\ti}^{4q} \Big]^\half \bE\Big[ 1 + \abs{Y_\tip}^{lm} + \abs{\overline{Z}_\ti}^{l}  \Big]^\half h^{\frac{\beta l}{2}} .
			\end{align*}
		Using systematically the moment bounds from Theorem \ref{theorem---path.regularity}, we then have
			\begin{align*}
				\bE\Big[ \abs{R^h(Y_\tip,\overline{Z}_\ti)}^2 \Big]
					\le C h^{\frac{\beta l}{2} }.
			\end{align*}
		The desired result then follows since, given that $\beta > 0$, we can take $l = \frac{2}{\beta}$, so that $\frac{\beta l}{2} \ge 1$.
					
		\textit{Case 3.} 
		Using $(\sum_{i=1}^n a_i)^q \le C \sum_{i=1}^n a_i^q$, the Cauchy-Schwartz inequality, and the Markov inequality with a power $l \ge 1$ yet to be determined, we have
			\begin{align*}
				\bE\Big[ \abs{R^h(Y_\tip,\overline{Z}_\ti)}^2 \Big]
					&
					\le 
					C \bE\Big[ \big( 1 + \abs{Y_\tip}^{2q} + \abs{\overline{Z}_\ti}^{2p} \big) \1_{\{\abs{Y_\tip}>r(h)\}} \Big]
					\\
					&
					\le 
					C \bE\Big[ 1 + \abs{Y_\tip}^{4q} + \abs{\overline{Z}_\ti}^{4p} \Big]^\half \bE\Big[ \1_{\{\abs{Y_\tip}>r(h)\}} \Big]^\half
					\\
					& 
					\leq  
					C \bE\Big[ 1 + \abs{Y_\tip}^{4q} + \abs{\overline{Z}_\ti}^{4p} \Big]^\half \bE\big[ \abs{Y_\tip}^l \big]^\half h^{\frac{\gamma l}{2}}					.
			\end{align*}
		Using systematically the moment bounds from Theorem \ref{theorem---path.regularity}, we then have
			\begin{align*}
				\bE\Big[ \abs{R^h(Y_\tip,\overline{Z}_\ti)}^2 \Big]	
					\le C h^{\frac{\gamma l}{2} }.
			\end{align*}
		The desired result then follows since, given that $\gamma > 0$, we can take $l$, so that $\frac{\gamma l}{2} \ge 1$.
	\end{proof}

\section{Verifications for some standard ways to tame the driver}
\label{appendix:VerifyingExamples}

\subsection{Examples of modified drivers}

In the main body of this work, we have shown how the set of general assumptions of subsubsection \ref{sec:fh} about the modified drivers $f^h$ ensure the convergence of the scheme.
As we stated in the introduction, our aim was to isolate a set of properties that would guarantee the convergence of the modified explicit scheme \eqref{equation---reference--definition.of.the.scheme} for a large class of modified drivers $f^h$, rather than just treating one particular case.
To use this result, we then have to show that various modifications of $f$ result in drivers $f^h$ which fit in our framework.

We present here several natural ways to modify the driver $f$ to avoid explosion of the scheme and obtain its convergence. We organize them in three categories, based on how the driver is modified.

\subsubsection*{Multiplicative taming}

Consider a radius $r(h) = r_0 h^{-\alpha}$, with $r_0 > 0$ and $\alpha > 0$. So $r(h) \goesto +\infty$ as $h \goesto 0$. 
We can tame the high values of $f$ by multiplicating it by a damping factor,
	\begin{align*}
		f^h(t,y,z) = \chi^h(y) f(t,y,z) , \quad \text{where} \quad \chi^h(y) = \frac{1}{1+F(y) r(h)^{-1}}.
	\end{align*}
Several choices are possible for the function $F$. We only consider the four following ones.
	\begin{description}
		\item[(a)] $F(y) = \abs{f(0,y,0)}$.
		\item[(b)] $F(y) = \frac{ \abs{f(0,y,0) - f(0,0,0)} } {\abs{y}} \ 1_{\{y \ne 0\}}$.
		\item[(c)] $F(y) = \abs{y}^m$.
		\item[(d)] $F(y) = \abs{y}^{m-1}$.
	\end{description}
The choices (a) and (b) use only the outputs of $f$ and require no detailed knowledge of $f$ (black-box taming), while the choices (c) and (d) use the input $y$ and require knowing the degree $m$ of the polynomial growth. 
Also, as can be seen already, the choices (a) and (c) result in a bounded driver (in the variable $y$, for fixed $h$) while (b) and (d) result in a driver with linear growth in $y$.

\subsubsection*{Outer taming}

Consider a radius $r(h) = r_0 h^{-\beta}$, with $r_0 > 0$ and $\beta > 0$.
The outer taming is given by 
	\begin{align*}
		f^h(t,y,z) = T^h \big( f(t,y,z) \big) , 
	\end{align*}
where $T^h$ is essentially a projection on the ball of $\R^n$ of center $0$ and radius $r(h)$. Specifically, we can consider at least the following two choices.
	\begin{itemize}
		\item[(A)] The projection (or truncation) : $T^h(f) = \frac{f}{ \max( 1,\abs{f}r(h)^{-1} ) } = \frac{r(h) f}{ \max( r(h),\abs{f} ) }$.
		\item[(B)] A smoothed projection : $T^h(f) = \frac{f}{ 1 + \abs{f}r(h)^{-1}  } = \frac{r(h) f}{ r(h) + \abs{f} }$.
	\end{itemize}
One could also consider a projection on the ball of radius $r(h)+1$ that is smoothed only at the transition region from identity to constant, so that it would remain the identity on the ball of radius $r(h)$ and be constant in any $y$-direction beyond $r(h)+1$.

Notice some general properties of $T^h$ : $\abs{T^h(f)} \le \abs{f}$ and $\abs{T^h(f)} \le r(h)$, for all $f \in \R^n$. 
The projection also satisfies $T^h(f) = f$ when $\abs{f} \le r(h)$.

\paragraph*{}
Notice that for both the case of the standard projection (A) and the case of the particular smoothed projection (B), the taming can be written multiplicatively, $f^h(t,y,z) = \chi^h(t,y,z) f(t,y,z)$. 
Indeed, we have 
	\begin{align*}
		T^h(f) = \frac{1}{\max(1, \abs{f}r(h)^{-1})}  \ f
			\qquad \text{and} \qquad 
		T^h(f) = \frac{1}{1 + \abs{f}r(h)^{-1} }  \ f
	\end{align*}
in cases (A) and (B), respectively.

Case (A), the standard projection, can therefore be viewed as a generalization-variation of the multiplicatively tamed driver, case (a), the generalization being that we consider a damping factor $\chi^h(t,y,z)$ instead of just $\chi^h(y)$, the variation being that we have to deal with $\max(1,x)$ instead of $1+x$.
For case (B), it is only a generalization of (a). 
If we consider a driver depending only on $y$, as we do in all our examples, we see that the outer taming (B) was already treated as the multiplicative taming (a).  So let us ignore this case and focus only on the standard projection (A), in this section.
From now on, for the outer taming, $T^h$ is the standard projection on the ball of radius $r=r(h)$.

\subsubsection*{Inner taming}

Another way to avoid values of $f$ that are too high is to limit the size of the inputs entered in $f$.
Consider a radius $r(h) = r_0 h^{-\gamma}$, with $r_0 > 0$ and $\gamma > 0$.
The inner taming is given by 
	\begin{align*}
		f^h(t,y,z) = f \big(t,T^h(y),z \big) , 
	\end{align*}
where $T^h$ is the projection on the ball of $\R^n$ of center $0$ and radius $r(h)$. Here again, as for the outer taming, one could consider a number of variations for the ``projection'' $T^h$. We will only study the example of the standard inner projection.
Recall the basic properties of $T$ : $\abs{T(y)} \le r$ and $\abs{T(y)} \le \abs{y}$ for all $y$, and $T(y) = y$ if $\abs{y} \le r$.

\subsection{Verifications for the inner taming by projection}

We present these verifications in full for the inner taming. Notice that such an approach, taming the driver from inside, was used in \cite{ChassagneuxJacquierMihaylov2014} for scalar SDEs. It was verified there that the inner projection yields a suitable driver, in dimension 1. Our verifications here work in any dimension $n \in \N^*$.

\subsubsection{Verification of \htgrowth}

Using \hfgrowth,
	\begin{align*}
		\abs{f^h(t,y,z)} \le K_t + K_y \abs{T^h(y)}^m + K_z \abs{z} 
			\le K_t + K_y r(h)^{m-1} \abs{y} + K_z \abs{z}  .
	\end{align*}
So we set $K^h_t = K_t$, $K^h_y = K_y r(h)^{m-1}$ and $K^h_z = K_z$. We have $(K^h_y)^2h$ bounded iff $\beta \le \frac{1}{2(m-1)}$.

\subsubsection{Verification of \htmongrowth}

We write
	\begin{align*}
		\scalar{y}{f^h(t,y,z)} 
			&=   \scalar{y}{f(t,T^h(y),z)}																																											\\
			&=   \scalar{y}{f(t,T^h(y),z)-f(t,0,z)} + \scalar{y}{f(t,0,z)}																																	\\
			&\le \scalar{y-0}{f(t,T^h(y),z)-f(t,0,z)} + \alpha \abs{y}^2 + \frac{K_t^2}{2\alpha} + \frac{K_z^2}{2\alpha} \abs{z}^2		,								\\
	\end{align*}
by the standard manipulations using \hfgrowth. We now want handle the main term. 
If $\abs{y} \le r(h)$, then by \hfmon 
	\begin{align*}
		\scalar{y-0}{f(t,T^h(y),z)-f(t,0,z)} = \scalar{y-0}{f(t,y,z)-f(t,0,z)} \le M_y \abs{y}^2 \le \max(0,M_y) \abs{y}^2 .
	\end{align*}
If 	$\abs{y} > r(h)$, then we note that $T^h(y) = r(h) \frac{y}{\abs{y}}$, or equivalently $y =  \frac{\abs{y}}{r(h)} T^h(y)$. Hence, from \hfmon we have
	\begin{align*}
		\scalar{y-0}{f(t,T^h(y),z)-f(t,0,z)} 
			&= \frac{\abs{y}}{r(h)} \scalar{T^h(y)-0}{f(t,T^h(y),z)-f(t,0,z)} 									
			\\
			&\le \frac{\abs{y}}{r(h)} M_y \abs{T^h(y)}^2															
			= \frac{\abs{y}}{r(h)} M_y r(h)^2 = M_y r(h) \abs{y}												
			\\
			&\le \max(0,M_y) \abs{y}^2 .
	\end{align*}
since $r < \abs{y}$.
So we have the same upper bound in both cases, and we can conclude that
	\begin{align*}
		\scalar{y}{f^h(t,y,z)} 
			&\le (\max(0,M_y)+\alpha) \abs{y}^2 + \frac{K_t^2}{2\alpha} + \frac{K_z^2}{2\alpha} \abs{z}^2 .
	\end{align*}
Therefore, taking $\bar{M}^h_t = \frac{K_t^2}{2\alpha}$, $\bar{M}^h_y = \max(0,M_y)+\alpha$ and $\bar{M}_z = \frac{K_z^2}{2\alpha}$ suits.

\subsubsection{Verification of \htreg}

We see immediately that, since $f^h$ only alters the argument $y$, \hfreg is unchanged :
	\begin{align*}
		\abs{ f^h(t',y,z') - f^h(t,y,z) } = \abs{ f(t',T^h(y),z') - f(t,T^h(y),z) } \le L_t \abs{t'-t}^\half + L_z \abs{z'-z}.
	\end{align*}

\subsubsection{Verification of \htregY}

Using \hfregY and the $2$-Lipschitzness of $T^h$, we have 
	\begin{align*}
		\abs{ f^h(t,y',z) - f^h(t,y,z) } 
			&=   \abs{ f(t,T^h(y'),z) - f(t,T^h(y),z) } 																					\\
			&\le L_y \big( 1 + \abs{T^h(y')}^{m-1} + \abs{T^h(y)}^{m-1} \big) \abs{T^h(y')-T^h(y)}			\\
			&\le 2 L_y \big( 1 + r(h)^{m-1} + r(h)^{m-1} \big) \abs{y'-y}													\\
			&= 2 L_y \big( 1 + 2 r(h)^{m-1} \big) \abs{y'-y} ,
	\end{align*}
so we set $L^h_y = 2 L_y \big( 1 + 2 r(h)^{m-1} \big)$. As usual, $(L^h_y)^2 h$ is bounded if $\beta \le \frac{1}{2(m-1)}$. We conclude that $y\mapsto f^h(\cdot,y,\cdot)$ is $L^h_y$-Lipschitz.

\subsubsection{Verification of \htmon}

Sadly, it does not seem to be true that $f^h$ satisfies \hfmon i.e. has $\Rmon = 0$.
We can however do the standard estimation via $R^h$, through   
		$f^h(t,y,z) = f(t,y,z) - R^h(t,y,z)$.
Then, using \hfmon for $f$,
	\begin{align*}
		 \scalar{ y' - y }{ f^h(t,y',z) - f^h(t,y,z) } 	
			&=  \scalar{ y' - y } { f(t,y',z) - f(t,y,z) } 
			\\
			& \qquad  - \scalar{ y' - y } { R^h(t,y',z) -  R^h(t,y,z) }
			\\
			&\le M_y \Abs{y'-y}^2 + \Rmon(t,y',y,z),
	\end{align*}
where 
		$\Rmon(t,y',y,z)  = - \scalar{ y' - y } { R^h(t,y',z) -  R^h(t,y,z) } $.
Using the estimate for $R^h$ found in the next subsubsection,
	\begin{align*}
		\abs{ \Rmon(t,y',y,z) }  
			&
			\le \abs{ y' - y } \abs{ R^h(t,y',z) -  R^h(t,y,z) } 																	
			\\
			&\le \big( \abs{y'} + \abs{y} \big) \big( \abs{R^h(t,y',z)} +  \abs{R^h(t,y,z)} \big) 					
			\\
			&\le C \big( \abs{y'} + \abs{y} \big) \Big( \big( 1 + \abs{y'}^m \big) \ \1_{\{\abs{y'}>r(h)\}} +  \big( 1 + \abs{y}^m \big) \ \1_{\{\abs{y}>r(h)\}} \Big) 					
			\\
			&
			\le C  \Big( 1 + \abs{y'}^{2m} + \abs{y}^{2m} \Big) \ \1_{\{\abs{y'}>r(h) \text{ or } \abs{y}>r(h)\}} .
	\end{align*}

\subsubsection{Verification of \htfcvg}

Using \hfregY and the fact that $\abs{y-T^h(y)}\le\abs{y}$, we have
	\begin{align*}
		\abs{R^h(t,y,z)} 
			&=   \abs{ f(t,y,z) - f^h(t,y,z) }																															\\
			&=   \abs{ f(t,y,z) - f(t,T^h(y),z) } \ 1_{\{\abs{y}>r(h)\}}																						\\
			&\le L_y \big( 1 + \abs{y}^{m-1} + \abs{T^h(y)}^{m-1} \big) \abs{y-T^h(y)}	\ 1_{\{\abs{y}>r(h)\}}					\\
			&\le L_y \big( 1 + \abs{y}^{m-1} + \abs{y}^{m-1} \big) \abs{y}	 \ 1_{\{\abs{y}>r(h)\}}										\\
			&\le C \big( 1 + \abs{y}^m \big) \ 1_{\{\abs{y}>r(h)\}} .
	\end{align*}

\subsection{Some verifications for the multiplicative tamings}

We provide here some of the verifications that the multiplicative tamings (a) to (d) proposed above satisfy the assumptions of Section \ref{sec:fh}. 
Full verifications can be found in side notes posted on the webpage of the first-listed author.

\subsubsection{Verification of \htmongrowth}

We use the fact that $f$ satisfies \hfmongrowth, as well as $\chi^h(y) \in [0,1]$, to write
	\begin{align*}
		\scalar{y}{f^h(t,y,z)} 
			&=  \chi^h(y) \scalar{y}{f(t,y,z)}
			\\
			&\le \chi^h(y) \Big( \bar{M}_t + \bar{M}_y \abs{y}^2 + \bar{M}_z \abs{z}^2 \Big)
			\le \bar{M}^h_t + \bar{M}^h_y \abs{y}^2 + \bar{M}^h_z \abs{z}^2
	\end{align*}
where $\bar{M}^h_t = \bar{M}_t$, $\bar{M}^h_z = \bar{M}_z$ and $\bar{M}^h_y = \max(0,\bar{M}_y)$.
These constants do not depend on $h$, so they work for \htmongrowth (they are bounded as $h \goesto 0$).

\subsubsection{Verification of \htregY}

We write
	\begin{align*}
		f^{h}(t,y',z) - f^{h}(t,y,z)  
			&= \chi^h(y') f(t,y',z) - \chi^h(y)  f(t,y,z) 													\\
			&= \chi^h(y') \chi^h(y) \big(f(t,y',z) - f(t,y,z)\big) +  \RregY(t,y,y',z),
	\end{align*}
where 
	\begin{align*}
		\RregY(t,y,y',z) := \chi^h(y') \big(1-\chi^h(y)\big) f(t,y',z)  - \big(1-\chi^h(y')\big) \chi^h(y) f(t,y,z). 
	\end{align*}  

We first estimate the ``good term'', that will give the Lipschitz-in-$Y$ regularity for fixed $h$, and then will estimate the remainder term.
Using \htregY , we have
	\begin{align*}
		\abs{ \chi^h(y') \chi^h(y) \big(f(t,y',z) - f(t,y,z)\big) }
			&\le \chi^h(y') \chi^h(y) L_y \big( 1 + \abs{y'}^{m-1} + \abs{y}^{m-1} \big) \abs{y'-y}			\\
			&\le  L_y \big( 1 + \chi^h(y') \abs{y'}^{m-1} + \chi^h(y) \abs{y}^{m-1} \big) \abs{y'-y}	.
	\end{align*}

To estimate the terms $\chi^h(y) \abs{y}^{m-1}$ we need to distinguish cases. 
We do it only for cases (c) and (d).

\paragraph{Case (c).} 

	\begin{align*}
		\chi^h(y) \abs{y}^{m-1} = \frac{\abs{y}^{m-1}}{ 1 + \abs{y}^{m-1} r(h)^{-1} } \le r(h) .
	\end{align*}
So we take $L^h_y := L_y (1+2r(h))$. And $(L^h_y)^2 h$ is bounded iff $\alpha \le \half$.

\paragraph{Case (d).} 

	\begin{align*}
		\chi^h(y) \abs{y}^{m-1} = \frac{\abs{y}^{m-1}}{ 1 + \abs{y}^{m} r(h)^{-1} } 
			\le 	\left\{\begin{aligned}
					 	r(h) \quad &\text{if } \abs{y} \ge 1 \\
					 	1 \quad &\text{if } \abs{y} \le 1
					\end{aligned}\right.
			\le 1 + r(h) .
	\end{align*}
So we take $L^h_y := L_y (3+2r(h))$. And $(L^h_y)^2 h$ is bounded iff $\alpha \le \half$.

\paragraph*{}
In conclusion, the ``good term'' is indeed bounded from above by $L^h_y \abs{y'-y}$ and $(L^h_y)^2 h$ remain bounded as $h \goesto 0$.
It remains to estimate the remainder term.

\paragraph*{}
We recall before starting that $\chi^h(y) \in [0,1]$ and that
	\begin{align*}
		1-\chi^h(y) = \frac{ F(y) r(h)^{-1} } { 1 + F(y) r(h)^{-1} } = \chi^h(y) F(y) r(h)^{-1} \le F(y) r(h)^{-1}.
	\end{align*}
Then, we estimate
	\begin{align*}
		\abs{\RregY(t,y,y',z)} 
			&= \Abs{ \chi^h(y') \big(1-\chi^h(y)\big) f(t,y',z)  - \big(1-\chi^h(y')\big) \chi^h(y) f(t,y,z) }											\\
			&\le \chi^h(y') \big(1-\chi^h(y)\big) \Abs{f(t,y',z)} + \big(1-\chi^h(y')\big) \chi^h(y) \Abs{f(t,y,z)}									\\
			&\le  \Big\{ 1 \times F(y) \times \Abs{f(t,y',z)} + F(y') \times 1 \times \Abs{f(t,y,z)}	\Big\} r(h)^{-1} .
	\end{align*}
Now, we use \hfgrowth and \hfregY to claim that $F(y) \le C (1+\abs{y}^m)$. For this we separate again the cases.
	\begin{description}
		\item[Case (c)] $F(y) = \abs{y}^m \le C (1+\abs{y}^m)$.
		\item[Case (d)] $F(y) = \abs{y}^{m-1} \le C (1+\abs{y}^m)$.
	\end{description}
In the above, we have used $\abs{y}^p \le 1+ \abs{y}^q$ for $q \ge p$.
Using again \hfgrowth we have
	\begin{align*}
		\abs{\RregY(t,y,y',z)} 
			&\le C \Big\{ \big( 1+\abs{y}^m \big)  \big( 1+\abs{y'}^m + \abs{z} \big) + \big( 1+\abs{y'}^m \big) \big( 1+\abs{y}^m + \abs{z} \big)	\Big\} r(h)^{-1} 			\\
			&\le C \Big\{ 1 + \abs{y}^{2m} + \abs{y'}^{2m} + \abs{z}^2 \Big\} h^\alpha .
	\end{align*}
To obtain the last estimate we have used several times the inequality $ab \le a^2 + b^2$.
Therefore, \htregY is verified.

\subsection{Some verifications for the outer taming by projection}

We provide here some of the verifications that the outer taming (A) proposed above satisfy the assumptions of subsubsection \ref{sec:fh}. The aim is mainly to illustrate a different case in assumption \htfcvg.
Full verifications can be found in side notes posted on the webpage of the first-listed author.

\subsubsection{Verification of \htfcvg}

Using the fact that $T^h(f) = f$ for $\abs{f} \le r(h)$, we see that
	\begin{align*}
		R^h(t,y,z) = \big[ f(t,y,z) - T^h\big( f(t,y,z) \big) \big] 1_{\{\abs{f(t,y,z)}>r(h)\}} .
	\end{align*}
Here we can save a factor 2 in the constants by using the fact $\abs{f-T^h(f)} \le \abs{f}$ and so with \hfgrowth we have
	\begin{align*}
		\Abs{R^h(t,y,z)}
			&\le \Abs{f(t,y,z) - T^h\big( f(t,y,z) \big) } \ 1_{\{\abs{f(t,y,z)}>r(h)\}} 				\\
			&\le \abs{f(t,y,z)} \ 1_{\{\abs{f(t,y,z)}>r(h)\}} 																			\\
			&\le C \big( 1 + \abs{y}^m + \abs{z} \big) \ 1_{\{\abs{f(t,y,z)}>r(h)\}} .
	\end{align*}

Thus $f^h$ satisfies \htfcvg with the criterion 2.

\subsection{Verifications for the standard truncation of the Brownian increments}

Here we prove, for a particular choice of increments $H_\cdot$ that the following inequality, (\textbf{HH}).3, is satisfied : there exists $C > 0$ such that, for all $h>0$,
	\begin{align*}
		\max_{i=0, \ldots, N-1} \bE\Bigg[ \Abs{\frac{\Delta W_\tip}{h} - H_\ip }^2 \Bigg]	
		\le C.
	\end{align*}

We now show that this is satisfied when $H_\ip$ is $\frac{T_{R(h)}(\Delta W_\tip)}{h}$. Here $T_R$ is the projection (for the distance induced by the infinity-norm $\abs{\cdot}_\infty$ on $\R^d$) on the centered ball of radius $R$ (for the norm $\abs{\cdot}_\infty$). Otherwise said, each coordinate of $\Delta W_\tip$ is capped at $R$. Note already that the norm $\abs{\,\cdot\,}$ appearing in the expectation (and indeed in all the computations that were done so far) is the Euclidian norm $\abs{\,\cdot\,}_2$.

For convenience in the following estimations, we rewrite the threshold $R(h)$ as $\sqrt{h} \, r(h)$. Intuitively, we want $R(h) \to +\infty$ as $h \to 0$ (so $r(h) \goesto +\infty$ faster than $1/\sqrt{h}$ in that case).

For $i \in \{0, \ldots, N-1 \}$, we write $\Delta W_\tip = \sqrt{h} \, G$ with $G \sim \mathcal{N}(0,1)$. We then have (with equalities in the sense of laws/distributions)
	\begin{align*}
		\delta :=& \frac{\Delta W_\tip}{h} - H_\ip =  \frac{\Delta W_\tip}{h} - \frac{T_{R(h)}\big( \Delta W_\tip \big)}{h}																\\
			=& \frac{1}{h} \left( \sqrt{h} G - T_{R(h)}\big( \sqrt{h} G \big) \right) = \frac{1}{h} \left( \sqrt{h} G - \sqrt{h} T_{R(h)/\sqrt{h}}\big(  G \big) \right)			\\
			=& \frac{1}{\sqrt{h}} \Big( G - T_{r(h)}\big(G\big) \Big) = \frac{1}{\sqrt{h}} \Big( G - T_{r(h)}\big(G\big) \Big) 1_{\{ \abs{G}_\infty > r(h) \}} \ .
	\end{align*}
Hence
	\begin{align*}
		\abs{\delta} 
		&
		\le \frac{1}{\sqrt{h}} \Abs{ G - T_{r(h)}\big(G\big) } \1_{\{ \abs{G}_\infty > r(h) \}}
		\le \frac{1}{\sqrt{h}} \Abs{G} \1_{\{ \abs{G}_\infty > r(h) \}}	
		\le \frac{1}{\sqrt{h}} \Abs{G} \1_{\{ \abs{G} > r(h) \}}.
	\end{align*}
Here we have used the fact that $\Abs{ G - T_{r(h)}\big(G\big) } \le \Abs{G}$ (one could have used a domination by $2\Abs{G}$ instead of this one). And also the fact that $\abs{G} \ge \abs{G}_\infty$.

Consequently, for $p \ge 1$, and $r(h) \ge 1$,
	\begin{align*}
		\bE\bigg[ \Abs{\frac{\Delta W_\tip}{h} - H_\ip}^p \bigg] 
		&
		\le \frac{1}{h^{p/2}} \bE\bigg[ \Abs{G}^p 1_{\{\abs{G}>r(h)\}} \bigg] 
		= \frac{1}{h^{p/2}} \int_{\R^d} \abs{x}^p 1_{\{\abs{x}>r(h)\}} 
		                                \frac{e^{-\frac{\abs{x}^2}{2}}}{(2\pi)^{d/2}} dx
		\\
		&
		= \frac{1}{h^{p/2}} \int_{\abs{x}>r(h)} \abs{x}^p \frac{e^{-\frac{\abs{x}^2}{2}}}{(2\pi)^{d/2}} dx	
		\\
		&= \frac{1}{h^{p/2}} \int_{r(h)}^{+\infty} \rho^p \frac{e^{-\frac{\rho^2}{2}}}{(2\pi)^{d/2}} \text{Surf}\Big(S^{d-1}(0,\rho)\Big) d\rho
		\\	
		&= \frac{c_d}{h^{p/2}} \int_{r(h)}^{+\infty} \rho^{p+d-1} e^{-\frac{\rho^2}{2}} d\rho
		\\	
		&\le  \frac{c_d \, C_{p+d-1}}{h^{p/2}} r(h)^{p+d-2} e^{-\frac{r(h)^2}{2}}\ ,
	\end{align*}
where we have used that the surface of the $d-1$ dimensional sphere of radius $\rho$ is $\tilde{c}_d \, \rho^{d-1}$, absorbed the $\frac{1}{(2\pi)^{d/2}}$ into $c_d$, and used that for $r \ge 1$, 
	\begin{align*}
		\int_r^{+\infty} x^q e^{-x^2/2} \ud x \le C_q \ r^{q-1} \ e^{-r^2/2} \ .
	\end{align*}

Now, in order to have \hH, it is sufficient to have (with the usual convention regarding constants)
	\begin{align*}
		\frac{1}{h^{p/2}} r(h)^{p+d-2} e^{-\frac{r(h)^2}{2}} \le C h^\alpha \ ,
	\end{align*}
for some $\alpha$ to be determined.
Taking the $\log$ and using the fact $\log r(h) \le \frac14 r(h)^2$ provided $r(h)$ is greater than some $r_0 \ge 1$ (which depends on $p$ and $d$), we see that
	\begin{align*}
		(p+d-2)\log\big(r(h)\big) - \frac{r(h)^2}{2} 
		&\le - \frac{r(h)^2}{4} 
		\le \log(C) + \big(\alpha+ \frac{p}{2}\big)\log(h)
	\end{align*}	
so it is sufficient to have
	\begin{align*}
		r(h)  \ge \sqrt{ 4 \log\Big(\frac{1}{C}\Big) + 4 \Big(\alpha+\frac{p}{2}\Big) \log\Big(\frac1h\Big) }  .
	\end{align*}
We can re-express this as a condition on $R(h)$:
	\begin{align*}
		R(h) = \sqrt{h}r(h) \ge \sqrt{ 4 \, h \, \log\Big(\frac{1}{C}\Big) + 4 \Big(\alpha+\frac{p}{2}\Big) \, h \log\Big(\frac1h\Big) } .
	\end{align*}
We need, in \hH, the case $p=2$, and it is satisfied if we take $\alpha = 0$.

\bibliography{BSDEpolyYtamed}  
\end{document}